\let\OLDthebibliography\thebibliography
\renewcommand\thebibliography[1]{
  \OLDthebibliography{#1}
  \setlength{\parskip}{0pt}
  \setlength{\itemsep}{0pt plus 0.3ex}
}
\setlist[enumerate]{topsep = 0.5ex, leftmargin=1.5cm, itemsep = -3pt}
\setlist[itemize]{topsep = 0.5ex, leftmargin=1cm, itemsep = -3pt}
\newtheorem{thm}{Theorem}[section]{\bf}{\it}
\newtheorem{cor}[thm]{Corollary}{\bf}{\it}
\newtheorem{lem}[thm]{Lemma}{\bf}{\it}
\newtheorem{prop}[thm]{Proposition}{\bf}{\it}
\newtheorem{theorem}{Theorem}
\newtheorem{lemA}[theorem]{Lemma}
\newcommand{\abs}[1]{\left\lvert #1 \right \rvert}
\newcommand{\intpart}[1]{\lfloor #1 \rfloor}
\newcommand{\norm}[1]{\lVert #1 \rVert}
\newcommand{\mc}[1]{\mathcal{#1}}
\newcommand{\m}[1]{\mathbb{#1}}
\def\ie{i.e. }
\renewcommand\Re{\operatorname{Re}}
\renewcommand\Im{\operatorname{Im}}
\def\QS{\operatorname{QS}}
\def\a{\alpha}
\def\b{\beta}
\def\g{\gamma}
\def\G{\Gamma}
\def\d{\delta}
\def\D{\Delta}
\def\t{\theta}
\def\l{\lambda}
\def\k{\kappa}
\def\s{\sigma}
\def\S{\Sigma}
\def\vare{\varepsilon}
\def\HH{{\mathbb H}}
\def\Chat{\hat{\m{C}}}
\def\eps{\varepsilon}
\def\dd{\,d}
\def\vol{\mathrm{vol}}
\newcommand{\ad}[1]{\overline{#1}}
\newcommand{\inprod}[2]{\langle #1,#2 \rangle}
\def\detz{\mathrm{det}_{\zeta}}
\def\P{\mathrm P}
\begin{document}

\title{Equivalent Descriptions of the Loewner Energy}

\author{Yilin Wang \thanks{Department of Mathematics, 
 ETH Zurich, Switzerland.  Email: yilin.wang@math.ethz.ch}}

\date{April 18, 2019}

% \institute{\at Department of Mathematics, 
%  ETH,
%  R\"amistrasse 101,
% Z\"urich 8092, Switzerland\\
%               Tel.: +41-44-632-3443 \\
%               \email{yilin.wang@math.ethz.ch}          
% }

\maketitle

\begin{abstract}
Loewner's equation provides a way to encode a simply connected domain or equivalently its uniformizing conformal map via a real-valued driving function of its boundary. 
The first main result of the present paper is that the Dirichlet energy of this driving function (also known as the Loewner energy) is equal to the Dirichlet energy of the log-derivative of the (appropriately defined) uniformizing conformal map.

This description of the Loewner energy then enables to tie direct links with regularized determinants and Teichm\"uller theory: 
We show that for smooth simple loops,  the Loewner energy can be expressed in terms of the zeta-regularized determinants of a certain Neumann jump operator. 
We also show that the family of finite Loewner energy loops coincides with the Weil-Petersson class of quasicircles,
and that the Loewner energy equals to a multiple of the universal Liouville action introduced by Takhtajan and Teo, which is a K\"ahler potential for the Weil-Petersson metric on the Weil-Petersson Teichm\"uller space. 
%\keywords{Loewner energy \and Zeta-regularized determinants \and Universal Teichm\"uller space \and Weil-Petersson Teichm\"uller space \and Polyakov conformal anomaly}
% \PACS{PACS code1 \and PACS code2 \and more}
% \subclass{Primary: 30C55 \and 30C62, Secondary: 30F60 \and 11M36}
\end{abstract}
\section{Introduction}
   
   \subsection*{\bf Background on Loewner energy} 
   Loewner introduced in 1923 \cite{Loewner1923} a way to encode/construct uniformizing conformal maps, via continuous iterations (now known as Loewner chains) of simple infinitesimal conformal distortions. It allows to  describe the uniformizing maps via a real-valued function that is usually referred to as the driving function of the Loewner chain.  
   Loewner's motivation came from the Bieberbach conjecture and Loewner chains have in fact been an important tool in the proof 
   of this conjecture by De Branges \cite{DeBranges1985} in 1985. 
   They are also a fundamental building block in the definition of Schramm-Loewner Evolutions by Schramm \cite{schramm2000scaling}. 
   
   Let us very briefly recall aspects of the Loewner chain formalism in the chordal setting, which is the first one that we will focus on here:  
   When $\g$ is a simple curve from $0$ to $\infty$ in the upper half-plane $\m H$, one can choose to parametrize $\g$ in a way such that the half-plane capacity of $\g [0,t]$ seen from infinity grows linearly. More precisely, this means that the \emph{mapping-out function} $g_t$ from $\m H \setminus \g[0,t]$ to $\m H$, that is normalized near infinity by $g_t (z) = z + o(1)$ as $z\to \infty$ does in fact satisfy $g_t (z) = z + 2t/z + o(1/z)$.
   By Carath\'eodory's theorem, the function $g_t$ can be extended continuously to the tip $\g_t$ of the slit $\g[0,t]$ which enables to define 
   $W(t) := g_t (\g_t)$. 
   The real-valued function $W$ turns out to be continuous, and it is called the \emph{driving function} of the chord $\g$ (or of the Loewner flow $(g_t)_{t \ge 0}$) in $(\m H, 0, \infty)$. 
   Loewner  \cite {Loewner1923} showed (in the slightly different radial setting, but the story is essentially the same as in this chordal setting, see 
   \cite{pommerenke_Loewner}) that the functions $t \mapsto g_t (z)$ do satisfy a very simple differential equation, which in turn 
   shows that the driving function uniquely determines the curve $\g$. Note that when $W$ is only continuous, it may not arise from a curve $\g$, however, the Loewner flow $g_t$ is always well defined on a subset of $\m H$.
   
The random curves driven by $W(t) := \sqrt \k B_t$, where $\k >0$ and $B$ is one-dimensional Brownian motion are Schramm's chordal SLE$_\kappa$ \cite{schramm2000scaling}
(it can be shown that these random curves are almost surely simple curves when $\kappa \le 4$ \cite{BasicSLE} and in that case, we are
in the framework described above),
which is conjectured (and for some special values of $\k$, this is proven -- see \cite {LSW,harmonicexplorer,smirnov}) to be the scaling limit of interfaces in some statistical physics models.
Given that the action functional that is naturally related to Brownian motion is the Dirichlet energy $\int_0^{\infty} W'(t)^2 /2  \dd t$,
this energy looks like a natural quantity to investigate in the Loewner/SLE context. 
It has in fact been introduced and studied recently by Friz and Shekhar \cite{FrizShekhar2017} and the author \cite{wang_loewner_energy} independently.
It should be emphasized that this Loewner energy is finite only when the simple chord $\g$ is quite regular, and that we will therefore be dealing only with fairly regular chords as opposed to SLEs in the present paper. 
Since a simple chord determines its driving function, one can view this Loewner energy as a function of the chord and 
denote it by $I_{\m H, 0,\infty}(\g)$. 

Elementary scaling considerations show that for any given positive $\lambda$, 
$$I_{\m H, 0,\infty}(\lambda \g)=I_{\m H, 0,\infty}(\g).$$
 This enables to 
 define the Loewner energy $I_{D,a,b}$ of a simple chord in a simply connected domain $D$ from $a$ to $b$ (where $a$ and $b$ are distinct prime ends of $D$), to be the Loewner energy of the image of this  chord in $\m H$ from $0$ to $\infty$ under any  uniformizing map from $D$ to $\m H$ which maps $a$ and $b$ to $0$ and $\infty$.

In our paper \cite {wang_loewner_energy}, we have shown that this Loewner energy was reversible, namely that 
$I_{D, a,b}(\g) = I_{D, b,a}(\g)$. Even though this is a result about deterministic Loewner chains, our proof was based on the reversibility of SLE$_\k$ and on an interpretation of the Loewner energy as a large deviation functional for SLE$_\k$ as $\k \to 0+$. This result
raised the question whether there are direct descriptions of the Loewner energy that do not involve the underlying Loewner chains. 
The goal of the present paper is to provide such descriptions. In fact, we will provide three such expressions of the Loewner energy, which we now briefly describe in the next three paragraphs. 
%In this introduction, in order to avoid having to introduce too many notations and concepts, 
%we choose to describe our results in a somewhat informal way as ``Results'', and we refer to the corresponding theorems for the actual precise statements.  

\subsection*{\bf Relation to the Dirichlet energy of the log-derivative of a uniformizing map} 
Let us introduce some notation: It turns out to be more convenient to work in a slit plane rather than in the 
upper half-plane (this just corresponds to conjugation of $g_t$ by the square map) when studying the Loewner energy of chords. 
In other words, one looks at a chord from $0$ to $\infty$ in the slit plane $\S:= \m C \setminus \m R_+$. 
Such a chord divides the slit plane into two connected components $H_1$ and $H_2$, and one can then define $h_i$ to be a conformal map from $H_i$ onto a half-plane fixing $\infty$. 
  See Figure \ref{fig_thm_chord} for a picturesque description of these two maps. 
Let $h$ be the map defined on $\S \setminus \g$, which coincides with $h_i$ on $H_i$.
Here and in the sequel $\dd z^2$ denotes the Euclidean (area) measure on $\m C$. 

\begin {thm}
\label {thm_chord_int_as_thm}
When $\g$ is a chord from $0$ to infinity in $\S$ with finite Loewner energy, then
\[I_{\S, 0, \infty} (\g)= \frac{1}{\pi} \int_{\S \setminus \gamma} \abs{\nabla \log \abs{h'(z)}}^2 \dd z^2 = \frac{1}{\pi} \int_{\S \setminus \gamma} \abs{\frac{h''(z)}{h'(z)}}^2 \dd z^2.\]
\end {thm}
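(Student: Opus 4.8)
The second equality is the easy one, so I would dispose of it first. On each component $H_i$ the map $h$ is conformal, hence $\log h'$ is locally holomorphic and $\log\abs{h'}=\Re\log h'$ is harmonic; for any holomorphic $f$ one has $\abs{\nabla\Re f}^2=\abs{f'}^2$, and here $f=\log h'$ gives $f'=h''/h'$. Thus $\abs{\nabla\log\abs{h'}}^2=\abs{h''/h'}^2$ pointwise and the two integrals agree. The real content is therefore the first equality, and I will write $\mathcal{E}(\g)\is\frac1\pi\int_{\S\setminus\g}\abs{\nabla\log\abs{h'}}^2\dd z^2$ for the functional to be identified with the Loewner energy.

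The plan is to differentiate both quantities along the capacity-time Loewner flow and match their derivatives. First I would reduce to smooth (say analytic) chords by approximation, so that the driving function and the maps $h_i$ are regular enough to differentiate, and then recover the general finite-energy case from continuity of both sides. Let $\tilde\g=\sqrt\g$ be the corresponding chord in $\m H$, capacity-parametrized with driving function $W$, so that $I_{\S,0,\infty}(\g)=I_{\m H,0,\infty}(\tilde\g)=\frac12\int_0^\infty W'(t)^2\dd t$. For each $t$ I map out the drawn part, push forward the future of the curve, and conjugate back by the square map to obtain a chord $\g^t$ from the driving point to $\infty$ in $\S$, with complementary domains $H_1^t,H_2^t$ and half-plane maps $h_i^t$; set $E(t)\is\mathcal{E}(\g^t)$. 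Then $E(0)$ is exactly the right-hand side to be computed, $E(t)\to 0$ as $t\to\infty$, and by additivity of the Loewner energy under the flow the energy of $\g^t$ equals $\frac12\int_t^\infty W'(s)^2\dd s$, with $t$-derivative $-\frac12 W'(t)^2$. Hence it suffices to prove $E'(t)=-\frac12W'(t)^2$.

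The heart of the matter is this derivative. Passing from $t$ to $t+dt$ amounts to post-composing each $h_i^t$ with the inverse of an infinitesimal (slit-plane) Loewner map $G_{dt}$, the mapping-out of the capacity-$dt$ initial segment of $\g^t$ near the driving point, which is conformal but not affine. Since the half-plane maps are determined only up to an affine map, under which $\mathcal{E}$ is invariant, writing $u_i\is\log\abs{(h_i^t)'}$ one gets, after expanding to first order,
\[E(t+dt)-E(t)=-\frac2\pi\sum_{i}\int_{H_i^t}\nabla u_i\cdot\nabla\log\abs{G_{dt}'}\dd z^2+O(dt^2),\]
the quadratic term being $O(dt^2)$. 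Here $\log\abs{G_{dt}'}$ is a harmonic function of size $O(dt)$ determined explicitly by the Loewner vector field (the square-map conjugate of $z\mapsto 2/(z-W)$), so both factors in the integrand are harmonic and Green's identity converts the bulk integral into a boundary integral, which localizes at the driving point where the contributions of $H_1^t$ and $H_2^t$ combine.

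The hard part will be to evaluate this localized boundary term and show it equals $-\frac12W'(t)^2$. This needs the precise local expansion of $\log\abs{(h_i^t)'}$ near the driving point in terms of $W'(t)$; the square-map normalization built into the slit-plane picture is exactly what makes this expansion clean and what forces the reference computation for the straight chord $\m R_-$ (where $h'\equiv 1$ and both sides vanish) to come out consistently. The remaining work is bookkeeping: checking that the contributions at $0$, at $\infty$, and along the boundary half-line $\m R_+$ vanish or cancel between the two sides, and justifying the interchange of limit and derivative for the smooth approximants via standard Dirichlet-energy estimates. I expect the boundary expansion at the driving point to be the genuine obstacle, with everything else reducing to conformal bookkeeping.
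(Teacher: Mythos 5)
Your reduction of the second equality to harmonicity is fine, and your overall architecture (differentiate along the Loewner flow, match $E'(t)$ with $-\tfrac12 W'(t)^2$, localize via Green's identity at the driving point) is genuinely close to the paper's route, where the same derivative comparison appears as the identity $a'(t)=b'(t)$ for $a(t)=J(h_t)$, $b(t)=I(\g[0,t])$. But your first-order expansion contains a concrete error that changes the answer. Writing $\s_{h^{t+dt}}=\s_{h^t}\circ G_{dt}^{-1}+\s_{G_{dt}^{-1}}$ and expanding the square, the ``quadratic'' term you discard as $O(dt^2)$ is exactly $\frac1\pi\int\abs{\nabla\log\abs{G_{dt}'}}^2\dd z^2 = J(G_{dt})$, the $J$-energy of the mapping-out function of the capacity-$dt$ increment. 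This is \emph{not} $O(dt^2)$: $\log\abs{G_{dt}'}$ is not uniformly $O(dt)$, because $G_{dt}'$ has square-root singularities at the tip and base of the infinitesimal slit, where the gradient is large on a region of diameter $\sim\sqrt{dt}$; by capacity-additivity $J(G_{dt})$ scales \emph{linearly} in $dt$, and the explicit linear-driver computation gives $J(G_{dt})=\tfrac12 W'(t)^2\,dt+o(dt)$ --- i.e.\ the discarded term carries the entire answer. Correspondingly, the cross term you keep either vanishes identically (if one factors in the order $\s_{\Psi_t}=\s_{\Psi_{t+dt}}\circ G_{dt}+\s_{G_{dt}}$, so that the smooth-across-the-curve factor is $\s_{G_{dt}^{-1}}\in\mc D^\infty(\S)$ --- this is the content of the paper's orthogonality lemma, Proposition~\ref{prop_zero}/Lemma~\ref{lem_strong_zero}, i.e.\ $J$-additivity), or, in your orientation, equals $-\pi J(G_{dt})$; either way your formula yields $E'(t)=-W'(t)^2$, off by a factor of $2$. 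The ``genuine obstacle'' you identify (the local expansion at the driving point) is real, but it must be spent on evaluating $J(G_{dt})$ itself --- which is precisely the paper's Section~\ref{sec_linear}, using the explicit flow asymptotics $f_t(0^\pm)=\pm2\sqrt t+O(t)$, $\sqrt{\g_t}=2i\sqrt t+O(t)$ --- not on a cross term.

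A second, independent gap is the reduction to smooth chords ``recovered by continuity of both sides.'' Finite-energy chords need not be $C^1$, and $J$ is only \emph{lower} semicontinuous under uniform convergence of drivers (Lemma~\ref{lem_lower_semi_J}); there is no a priori continuity of $\g\mapsto J(h)$ along an approximating sequence. The paper must therefore prove the two inequalities separately: $J\le I$ from lower semicontinuity plus the piecewise-linear case, and $J\ge I$ from absolute continuity of $t\mapsto J(h_t)$, a.e.\ differentiability of $W$, and a rescaled-concatenation argument reducing the derivative at a differentiability point back to the linear case. Some substitute for this two-sided argument is needed to close your proof for general finite-energy~$\g$.
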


 \begin{figure}[ht]
 \centering
 \includegraphics[width=0.7\textwidth]{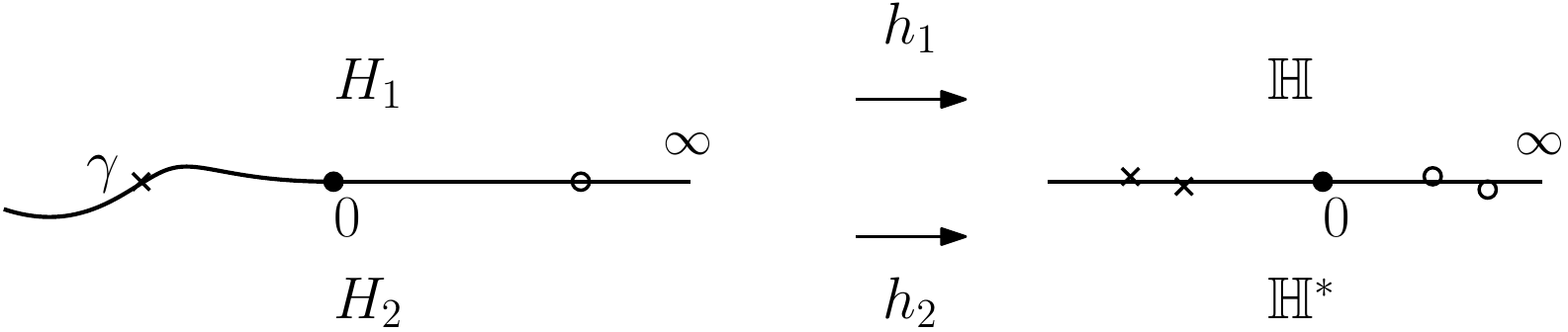}
 \caption{\label{fig_thm_chord} We often choose the half-planes to be $\m H$ and the lower half-plane $\m H^*$ as the image of $h_1$ and $h_2$ to fit into the Loewner chain setting. However, it is clear that the last two expressions of the equality in Theorem~\ref{thm_chord_int_as_thm} is invariant under transformations $z \mapsto az +b$, for $a \in \m C^*$ and $b\in \m C$.  
} 
 \end{figure}

The Loewner energy also has a natural generalization to oriented simple loops (Jordan curves) with a marked point (root) embedded in the Riemann sphere, such that if we identify the simple chord $\g$ in $\S$ connecting $0$ to $\infty$ with the loop $\g \cup \m R_+$, then the loop energy of $\g \cup \m R_+$ rooted at $\infty$ and oriented as $\g$ is equal to the chordal Loewner energy of $\g$ in $(\S, 0, \infty)$. 
In a joint work with Steffen Rohde \cite{LoopEnergy}, we have shown that this Loewner loop energy, denoted by $I^L$, depends only on the image (i.e. of the trace) of the loop. 
In particular, it does not depend on the root of the loop. 
The Loewner loop energy is therefore a non-negative and M\"obius invariant quantity on the set of free loops, which vanishes only on circles.
The proof in \cite{LoopEnergy} relies on the reversibility of chordal Loewner energy and a certain type of  surgeries on the loop to displace the root.  
This root-invariance suggests that the framework of loops provides even more symmetries and invariance properties than the chordal case when one studies  Loewner energy. 

In the present paper, we will derive the counterpart of Theorem~\ref{thm_chord_int_as_thm} for loops:
\begin{thm}[see Theorem~\ref{thm_loop_identity}]
\label {thm_mr2}
 If $\g$ is a loop passing through $\infty$ with finite Loewner energy, then 
\[I^L(\g) = \frac{1}{\pi}  \int_{\m C \setminus \g} \abs {\nabla \log \abs{h' (z)}}^2 \dd z^2 ,\]
  where $h$ maps $\m C \setminus \g$ conformally onto two half-planes and fixes $\infty$.
\end{thm}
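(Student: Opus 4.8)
The plan is to deduce the loop identity from the chordal identity of Theorem~\ref{thm_chord_int_as_thm}. A word of caution drives the whole setup: since $\g$ passes through $\infty$, the only normalisations preserving the form of the statement are the affine maps $z\mapsto az+b$ fixing $\infty$, under which the right-hand side is invariant (Figure~\ref{fig_thm_chord}); full M\"obius invariance of the integral is \emph{not} available here and must not be used, even though $I^L$ itself is M\"obius invariant. Using the root-invariance of $I^L$ from \cite{LoopEnergy} to root the loop at $\infty$, together with an affine map sending some finite point of $\g$ to $0$, I would first arrange that $\g=\eta_1\cup\eta_2$ is the union of two arcs joining $0$ to $\infty$. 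Write $\Omega_1,\Omega_2$ for the two components of $\m C\setminus\g$ and $h_i\colon\Omega_i\to\HH$ for the conformal maps fixing $\infty$, so that $h=h_i$ on $\Omega_i$.

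\textbf{Reduction to a chord.} Next I would uniformise the slit plane $\m C\setminus\eta_2$ by a conformal map $\Phi\colon \m C\setminus\eta_2\to\Sigma$ fixing $0$ and $\infty$ and sending $\eta_2$ to $\m R_+$. Then $\gamma_0:=\Phi(\eta_1)$ is a chord from $0$ to $\infty$ in $\Sigma$, the restrictions of $\Phi$ identify $\Omega_1,\Omega_2$ with the two components $H_1,H_2$ of $\Sigma\setminus\gamma_0$, and by conformal invariance of the chordal energy $I_{\Sigma,0,\infty}(\gamma_0)=I_{\m C\setminus\eta_2,0,\infty}(\eta_1)$. Since the energies ignore post-composition by affine maps of the half-plane, I may take the chordal uniformisers $\tilde h_i\colon H_i\to\HH$ and set $h_i=\tilde h_i\circ\Phi$, so that on $\Omega_i$
\[ \log\abs{h_i'}=\big(\log\abs{\tilde h_i'}\big)\circ\Phi+\log\abs{\Phi'}. \]
Both summands are harmonic, and the Dirichlet energy of a harmonic function is a conformal invariant, whence $\int_{H_i}\abs{\nabla\log\abs{\tilde h_i'}}^2\,\dd w^2=\int_{\Omega_i}\abs{\nabla[(\log\abs{\tilde h_i'})\circ\Phi]}^2\,\dd z^2$. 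Expanding the square, summing over $i$, and applying Theorem~\ref{thm_chord_int_as_thm} to $\gamma_0$, I obtain
\[ \frac1\pi\int_{\m C\setminus\g}\abs{\nabla\log\abs{h'}}^2\,\dd z^2 = I_{\Sigma,0,\infty}(\gamma_0)+\mc A+\mc B, \]
where $\mc A=\tfrac1\pi\int_{\m C\setminus\eta_2}\abs{\nabla\log\abs{\Phi'}}^2\,\dd z^2\ge 0$ is the genuinely quadratic correction and $\mc B=\tfrac2\pi\sum_i\int_{\Omega_i}\nabla[(\log\abs{\tilde h_i'})\circ\Phi]\cdot\nabla\log\abs{\Phi'}\,\dd z^2$ collects the cross terms.

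\textbf{The main obstacle.} The chordal energy $I_{\Sigma,0,\infty}(\gamma_0)=I_{\m C\setminus\eta_2,0,\infty}(\eta_1)$ treats the two arcs asymmetrically, whereas $I^L(\g)$ is symmetric in $\eta_1$ and $\eta_2$; the discrepancy is precisely $\mc A+\mc B$, a conformal anomaly created by straightening $\eta_2$ with the non-affine map $\Phi$. It vanishes exactly when $\eta_2$ is a straight ray, recovering the known relation $I^L(\gamma_0\cup\m R_+)=I_{\Sigma,0,\infty}(\gamma_0)$. The heart of the proof is therefore to establish
\[ I^L(\g)=I_{\Sigma,0,\infty}(\gamma_0)+\mc A+\mc B, \]
i.e. that the loop energy exceeds the one-sided chordal energy by exactly these two terms. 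I expect this to be the crux and the most delicate point. I would attack it by integrating the cross terms $\mc B$ by parts (legitimate since both factors are harmonic) to reduce them to boundary integrals along $\g$, and then match the jump of the normal derivatives across the two sides of the loop with the boundary values of $\log\abs{\Phi'}$, so that $\mc A+\mc B$ reassembles into the missing symmetric contribution.

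\textbf{A cleaner alternative, and loose ends.} Because the anomaly above is an artefact of the asymmetric reduction, an arguably more transparent route is to bypass the straightening of $\eta_2$ entirely and mirror the proof of Theorem~\ref{thm_chord_int_as_thm} directly in the loop setting: running the loop Loewner flow so that both complementary components are uniformised simultaneously keeps the computation symmetric from the start, and no correction term ever appears. Whichever route is taken, a secondary technical point must be dispatched once the algebraic identity is in place, namely the finiteness of the integrals and the validity of the integrations by parts near $0$, near $\infty$, and along $\g$, for a general finite-energy loop, where $\g$ is only known to be a quasicircle and need not be smooth.
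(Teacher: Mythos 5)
There is a genuine gap, and it sits exactly where you yourself locate the ``heart'' of the proof. Your reduction rewrites $\frac1\pi\int_{\m C\setminus\g}\abs{\nabla\log\abs{h'}}^2\dd z^2$ as $I_{\S,0,\infty}(\g_0)+\mc A+\mc B$ and then declares the remaining task to be the identity $I^L(\g)=I_{\S,0,\infty}(\g_0)+\mc A+\mc B$. But by the additivity of the Loewner energy (grow $\eta_2$ first from the root $\infty$, then $\eta_1$ in the slit complement), one has $I^L(\g,\infty)=I^A(\eta_2,\infty)+I_{\m C\setminus\eta_2,0,\infty}(\eta_1)$, so your target identity is equivalent to $\mc A+\mc B=I^A(\eta_2,\infty)$, i.e.\ (once $\mc B=0$ is shown, which is an orthogonality statement of the type of Lemma~\ref{lem_strong_zero}) to $\frac1\pi\int_{\m C\setminus\eta_2}\abs{\nabla\log\abs{\Phi'}}^2\dd z^2=I^A(\eta_2,\infty)$. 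That is precisely the arc version of the theorem you are trying to prove (Lemma~\ref{lem_arc_identity}), applied to $\eta_2$: you have not reduced the problem, you have relocated it. Your proposed attack --- integrate $\mc B$ by parts and ``match jumps of normal derivatives'' --- can at best establish $\mc B=0$; it cannot produce $I^A(\eta_2,\infty)$, because the arc energy is defined through a limit of Loewner driving functions of $\eta_2$ grown from $\infty$ and admits no a priori boundary-integral expression. The paper handles exactly this point by a separate ``blowing up at the root'' argument: it applies the finite-capacity chordal identity (Proposition~\ref{prop:eq_chord_1}) to the pieces $\G[-\infty,t]$, and passes to the limit using normal families and the Carath\'eodory kernel theorem, the lower semicontinuity of $J$ (Lemma~\ref{lem_lower_semi_J}) for one inequality and the strong orthogonality Lemma~\ref{lem_strong_zero} for the other; the loop case is then a second limit of the same kind. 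None of this machinery appears in your proposal.

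A secondary but real problem is circularity: you take Theorem~\ref{thm_chord_int_as_thm} (the identity for chords going all the way to $\infty$) as an input, whereas in the paper that statement is itself deduced from the loop identity; the only chordal identity proved independently is Proposition~\ref{prop:eq_chord_1} for finite-capacity chords. So your ``reduction to a chord'' already invokes an infinite-capacity statement requiring the very limiting argument you are missing. Finally, your ``cleaner alternative'' of uniformising both components simultaneously is, once made precise, exactly the paper's limit $h=\lim_n h_n$ of mapping-out functions, and it does not make the correction terms disappear: the paper still must split $J(h)=J(\psi_n)+J(h_n)+(\text{cross term})$ and control the cross term by Cauchy--Schwarz as $n\to\infty$.
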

Actually, one can view Theorem~\ref {thm_chord_int_as_thm} as a consequence of Theorem~\ref{thm_loop_identity} (and this is the order in which we will derive things).
Note that the identity also holds when $I^L(\g) = \infty$, which follows in fact from the characterization of Weil-Petersson quasicircles (see below) by its welding homeomorphism \cite{Shen2017} that we will not enter into detail here.

Let us say a few words about the strategy of our proof of these two results, which will be the main purpose of the first part of the present paper (Sections \ref{sec_weak_J} to \ref{sec_loop}).
We will first derive the additivity (called $J$\emph{-additivity}) of the integral on log-derivative of the uniformizing map when the curve is $C^{1,\a}$-regular (Section~\ref{sec_weak_J}).
Curves with piecewise linear driving function fall into this class of curves. 
Weak $J$-additivity suffices to obtain a version of Theorem~\ref{thm_chord_int_as_thm} for finite capacity chords driven by a piecewise linear function using explicit infinitesimal computation in Section~\ref{sec_linear}. 
It then provides the bound to deduce the general $J$-additivity for all finite energy curves (Corollary~\ref{cor_strong_J_add}) and the proof of the identity (Proposition~\ref{prop:eq_chord_1}) for finite capacity and finite energy chords is completed in Section~\ref{sec_approx_finite_chord}. 
We prove Theorem~\ref{thm_loop_identity} in Section~\ref{sec_loop} by passing the capacity to $\infty$ and generalize the identity to loops.
It is worth emphasizing that already in the case of a linear driving function where the map $h$ is almost explicit, the proof is not immediate.

As briefly argued in the concluding section (Section \ref {informal}) of the present paper,  it is possible to heuristically interpret  Theorem~\ref{thm_loop_identity}
as a $\kappa \to 0+$ limit of some relations between SLE$_\kappa$
curves and Liouville Quantum gravity, as pioneered by Sheffield in \cite {scott_zipper}. This is actually the line of thought that led us to guessing the Theorem~\ref{thm_loop_identity}.

Theorem~\ref{thm_loop_identity} then opens the door to a number of connections with other ideas, which we then investigate in Section~\ref{sec_detz} and Section~\ref{sec_WP} and that we now describe.

\subsection*{\bf Relation to zeta-regularized determinants}
The first approach involves zeta-regularized determinants of Laplacians for smooth loops. Our main result in this direction is Theorem~\ref{thm_energy_determinant}, which can be summarized by: 
\begin {thm}
\label {thm_mr3}
For $C^\infty$ loops, one has the 
identity 
$$I^L(\g) = 12 \log \detz' N(\g, g) - 12 \log l_g(\g) - (12 \log \detz' N (S^1, g) - 12 \log l_g(S^1) ),$$
where $g$ is any metric on the Riemann sphere conformally equivalent to the spherical metric, $l_g(\g)$ the arclength of 
$\g$, and $\detz' N(\g, g)$ the zeta-regularized determinant of the Neumann jump operator across $\g$.
\end {thm}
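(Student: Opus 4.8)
The plan is to deduce the identity from Theorem~\ref{thm_mr2} by invoking two classical tools of spectral geometry: the Burghelea--Friedlander--Kappeler (BFK) surgery formula, which factorises the regularised determinant of the Laplacian on a closed surface cut along a curve through the determinant of the associated Neumann jump operator, and the Polyakov--Alvarez conformal anomaly formula, whose leading bulk term is exactly a Dirichlet energy carrying the coefficient $1/(12\pi)$; this coefficient is what will produce the factor $12$. Before computing anything I would record that the right-hand side is independent of the conformal metric $g$: under $g \mapsto e^{2\varphi} g$ both $\log \detz' N(\cdot, g)$ and $\log l_g(\cdot)$ change, but the anomaly calculus shows that the increment of $12 \log \detz' N(\cdot, g) - 12 \log l_g(\cdot)$ is the same for $\g$ as for $S^1$, so it cancels in the difference. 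This lets me take $g$ to be the spherical metric and, using the M\"obius invariance of $I^L$ together with the invariance just noted, place $\g$ so that it passes through $\infty$, so that Theorem~\ref{thm_mr2} applies with $h$ mapping the two complementary components $\O_1, \O_2$ of $\g$ onto half-planes.

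Next I would apply the BFK formula to the sphere $\Chat$ cut along $\g$ into $\O_1$ and $\O_2$, which reads schematically
\[ \detz' \Delta_{\Chat, g} \;=\; C\, \detz \Delta^{D}_{\O_1, g}\, \detz \Delta^{D}_{\O_2, g}\, \detz' N(\g, g), \]
where $\Delta^{D}$ denotes the Dirichlet Laplacian and $C$ a universal constant. The zero mode of $N$ consists of the constant functions, and normalising it by the $g$-length of $\g$ is precisely what produces the $-12 \log l_g(\g)$ term in the statement. Solving for $\detz' N(\g, g)$, the whole-sphere determinant $\detz' \Delta_{\Chat, g}$ is a fixed number once $g$ is chosen and does not depend on the shape of $\g$, so it will cancel against the corresponding $S^1$ term in the difference.

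It then remains to evaluate the two Dirichlet-Laplacian determinants. For each $\O_i$ I would read $h_i$ as a conformal change of metric onto a standard flat reference domain, with conformal factor $\sigma_i = \log \abs{h_i'}$, and apply Polyakov--Alvarez in the form
\[ \log \detz \Delta^{D}_{\O_i, g} \;=\; -\frac{1}{12\pi} \int_{\O_i} \abs{\nabla \sigma_i}^2 \dd z^2 \;+\; (\text{boundary and geodesic-curvature integrals over } \g) \;+\; \text{const}. \]
Summing over $i = 1, 2$, the bulk terms assemble into $-\tfrac{1}{12\pi} \int_{\Chat \setminus \g} \abs{\nabla \log \abs{h'}}^2 \dd z^2$, which by Theorem~\ref{thm_mr2} equals $-\tfrac{1}{12} I^L(\g)$. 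Collecting the factors of $12$ then yields the claimed expression, provided all remaining pieces --- the reference-domain determinants, the boundary and curvature integrals, and the fixed sphere determinant --- reduce to universal constants that coincide with the $S^1$ contribution and so drop out of the difference.

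The main obstacle is exactly this last point: the careful bookkeeping of the lower-order terms and of the regularisations. One must check that BFK and Polyakov--Alvarez genuinely apply to $C^\infty$ loops bounding these a priori unbounded domains, which requires dealing with the point at infinity (for instance by passing to a disc model or compactifying), and one must show that the geodesic-curvature and normal-derivative boundary integrals either cancel between the two sides of $\g$ --- the outward normals of $\O_1$ and $\O_2$ along $\g$ are opposite --- or combine with the $S^1$ case into the same universal constant. Handling the zero modes and the length normalisation so that $-12 \log l_g(\g)$ comes out with the correct coefficient is the delicate part. The cleanest consistency check is the case $\g = S^1$, where $I^L = 0$ and the right-hand side vanishes identically, which pins down the universal constant.
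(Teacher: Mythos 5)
Your proposal is correct and follows essentially the same route as the paper: the paper introduces the functional $\mc H(\g,g)=\log\detz'(-\D_{S^2,g})-\log\vol_g(S^2)-\sum_i\log\detz(-\D_{D_i,g})$, proves its Weyl-invariance and the identity $I^L(\g)=12\mc H(\g,g)-12\mc H(S^1,g)$ by exactly your Polyakov--Alvarez computation (with the bulk Dirichlet term identified with $I^L(\g)$ via Theorem~\ref{thm_mr2} and the curvature/normal-derivative boundary terms cancelling across the two sides of $\g$), and then invokes the Burghelea--Friedlander--Kappeler formula only to translate $\mc H$ into the Neumann jump operator statement. The one detail to fix in your sketch is that the reference domains are the two hemispheres with the \emph{spherical} metric rather than flat domains, so the conformal factor is $\s=\log\abs{f'}+\psi\circ f-\psi$ and the extra terms involving $\psi$ must be shown to cancel --- which is precisely the bookkeeping you flagged and which the paper carries out.
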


Let us already note that the root-invariance (and also the reversibility) 
of the loop energy for smooth loops follows directly from this result, because there is no more parametrization involved in the right-hand side. 
This identity is also reminiscent of the partition function formulation of the SLE/Gaussian free field coupling by Dub\'edat \cite{Dubedat_GFF}. 

The zeta-regularization of operators was introduced by Ray and Singer \cite{RaySinger1971} and are then used by physicists (e.g. Hawking \cite{Hawking1977}) to make sense of quadratic path integrals. 
The determinants of Laplacians on Riemann surfaces also play a crucial role in Polyakov's quantum theory of bosonic strings \cite{polyakov1981B}. 
Polyakov and Alvarez studied the variation of the functional integral under conformal changes of metric, for surfaces with or without boundary \cite{Alvarez1983}, resp. \cite{polyakov1981B} which is known as the Polyakov-Alvarez conformal anomaly formula (Theorem~\ref{thm_polyakov}). 
Osgood, Phillips and Sarnak \cite{OPS} showed that such variation is realized by the zeta-regularized determinants of Laplacians. 
The Polyakov-Alvarez conformal anomaly formula is the main tool in our proof of Theorem~\ref{thm_energy_determinant}.
Notice that the regularization is well defined when the boundary of the bounded domain is regular enough (e.g. $C^2$), and that the variation formula was also derived under boundary regularity conditions.
This is why to stay on the safe side in the present paper, we restrict ourselves to $C^{\infty}$ loops whenever we consider these regularized determinants to conform with the setup of \cite{BFK1992MV,BFKM1994logdet,OPS}.

The zeta-regularized determinant of the Neumann jump operator $N(\g, g)$ that is referred to in Theorem \ref {thm_mr3} is 
closely related to such determinants of Laplacians via a Mayer-Vietoris type surgery formula \cite{BFK1992MV} that we will recall in Section \ref {sec_detz}.

\subsection*{\bf Relation to the Weil-Petersson Teichm\"uller space} 
It was shown in \cite{LoopEnergy} that finite energy loops are quasicircles. 
Since the Loewner energy is M\"obius invariant, it is very natural to consider them as points in the universal Teichm\"uller space $T(1)$ which can be modeled by the homogeneous space 
$\text{M\"ob} (S^1) \backslash \QS(S^1)$ 
that is the group QS$(S^1)$ of quasisymmetric homeomorphisms of the unit circle $S^1$ modulo M\"obius transformations of $S^1$, via the welding function of the quasicircle (for basic material on quasiconformal maps and Teichm\"uller spaces, readers may consult e.g. \cite{lehto2012univalent,lehto1973quasiconformal,Nag1988complex}). 
On the other hand, it is easy to see that quasicircles do not always have finite Loewner energy (for instance, quasicircles with corners have infinite energy). 
This raises the natural question to identify the subspace of finite energy loops in the Teichm\"uller space. The answer to this question is the main purpose of Section~\ref{sec_WP}.
 
Recall that the equivalent classes  $\text{M\"ob}(S^1) \backslash \text{Diff}(S^1)$ of smooth diffeomorphisms of the circle is naturally embedded into $T(1)$ since they are clearly quasisymmetric. 
It carries a remarkable complex structure, and there is a unique (up to constant factor) homogeneous K\"ahler metric on it which has also been studied intensively by both physicists and mathematicians, see e.g. Bowick, Rajeev, Witten \cite{BowickRajeev1987holomorphic,BowickRajeev1987string,Witten} as it plays an important role in the string theory. 
Nag and Verjovsky \cite{Nag1988complex} showed that this metric coincides with the Weil-Petersson metric on $T(1)$ and Cui \cite{Cui2000} showed that the completion $T_0(1)$ (called the Weil-Petersson Teichm\"uller space) of $\text{M\"ob} (S^1)\backslash \text{Diff}(S^1)$ under the Weil-Petersson metric is the class of quasisymmetric functions whose quasiconformal extension has $L^2$-integrable complex dilation with respect to the hyperbolic metric. 

The memoir by Takhtajan and Teo \cite{TT2006WP} studies systematically the Weil-Petersson Teichm\"uller space. They proved that $T_0(1)$ is the connected component of the identity in $T(1)$ viewed as a complex Hilbert manifold (this is actually where the notation of $T_0(1)$ comes from) and established many other equivalent characterizations of the Weil-Petersson Teichm\"uller space. 
They also introduced a quantity which is very relevant for the present paper: the universal Liouville action $\bf S_1$ (we will recall its definition in \eqref{eq_def_S1}) and showed that it is a K\"ahler 
potential for the Weil-Petersson metric on $T_0(1)$. Later, Shen, et al. \cite{Shen2013,Shen2017,Shen2018} did characterize $T_0(1)$ directly in terms of the welding homeomorphisms.  

The main result of Section~\ref{sec_WP} of the present paper is Theorem~\ref{thm_energy_liouville} that loosely speaking says  that:
\begin {thm}
\label {thm_mr4}
A Jordan curve $\g$ has finite Loewner energy if and only if $[\g] \in T_0(1)$ and 
$$I^L(\g) = {\bf S_1} ([\g]) / \pi,$$
where we identify $\g$ with its welding function which lies in $\QS(S^1)$. 
\end {thm}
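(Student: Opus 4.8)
The strategy is to combine the geometric identity of Theorem~\ref{thm_loop_identity} with the analytic framework of \cite{TT2006WP}, so that the whole statement reduces to a single complex-analytic identity. Writing $\m C \setminus \g = \Omega_1 \sqcup \Omega_2$ and letting $f : \m D \to \Omega_1$ and $g : \m{D}^* \to \Omega_2$ be the normalized uniformizing maps entering the definition \eqref{eq_def_S1} of the universal Liouville action, it suffices to prove
\[ \int_{\m C \setminus \g} \abs{\nabla \log \abs{h'(z)}}^2 \dd z^2 = {\bf S_1}([\g]) \]
for every quasicircle $\g$, with both sides allowed to equal $+\infty$, where $h$ is the map to two half-planes fixing $\infty$ of Theorem~\ref{thm_loop_identity}. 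Granting this, Theorem~\ref{thm_mr4} follows at once: Theorem~\ref{thm_loop_identity}, together with its remark (backed by \cite{Shen2017}) that the energy identity persists when $I^L(\g)=\infty$, turns the left-hand side into $\pi\, I^L(\g)$, while Takhtajan and Teo \cite{TT2006WP} show that ${\bf S_1}([\g])$ is finite precisely when $[\g] \in T_0(1)$. Since finite energy loops \cite{LoopEnergy} and Weil-Petersson quasicircles are in particular quasicircles, the maps $f,g,h$ are always defined, and the difference of the two sides is a finite constant whenever either is finite, which closes both directions of the equivalence.

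To prove the displayed identity I would pass to the disk by the conformal change of variables $z = f(w)$ on $\Omega_1$ and $z = g(w)$ on $\Omega_2$, using that the Dirichlet energy of a function is a conformal invariant in two dimensions. It is instructive to note that, had the uniformizing maps been chosen to the disks, i.e. $h$ replaced by $f^{-1}$ and $g^{-1}$, then $\log\abs{(f^{-1})' \circ f} = -\log\abs{f'}$ and the change of variables would return \emph{exactly} the bulk integrals $\int_{\m D}\abs{f''/f'}^2 \dd w^2$ and $\int_{\m{D}^*}\abs{g''/g'}^2 \dd w^2$ of \eqref{eq_def_S1}; the correction term of \eqref{eq_def_S1} thus measures precisely the effect of using the half-plane normalization of Theorem~\ref{thm_loop_identity} instead. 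Concretely, $h$ differs from $f^{-1}$, resp. $g^{-1}$, by a M\"obius transformation together with the M\"obius change of frame relating the two normalizations of $\g$, and tracking these factors the same change of variables yields the bulk integrals plus remainder terms of the form $\int_{\m D}\abs{\nabla v_1}^2 \dd w^2$ and $-2\int_{\m D}\nabla\log\abs{f'}\cdot\nabla v_1 \dd w^2$ (and analogously on $\m{D}^*$), where each $v_i$ is the logarithm of the modulus of a M\"obius derivative and is therefore harmonic on the disk.

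The heart of the proof is then to show that the sum of all remainder terms equals the normalizing constant $-4\pi\log\abs{f'(0)/g'(\infty)}$ of \eqref{eq_def_S1}. Since each $v_i$ is harmonic, Green's first identity collapses every remainder to a boundary integral over the unit circle, in which the contributions of $f$ and $g$ along $\g$ are glued by the welding homeomorphism and cancel, while the contributions localized at the distinguished points $0 \in \m D$ and $\infty \in \m{D}^*$, where the relevant M\"obius factors carry their logarithmic behaviour, produce precisely $-4\pi\log\abs{f'(0)}$ and $+4\pi\log\abs{g'(\infty)}$. I expect the main obstacle to be analytic rather than algebraic: $\g$ is only a quasicircle, so Green's theorem and the rearrangement of the boundary integrals are not directly licensed, and the computation must first be carried out for $C^\infty$ loops, where the classical variational tools apply, and then transferred to the general Weil-Petersson class by density and continuity of both sides, all the while monitoring the normalizations at $0$ and $\infty$ so that the constant emerges with the correct coefficient.
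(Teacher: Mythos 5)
Your overall architecture (prove the identity for smooth loops by an explicit harmonic-function computation, then extend by approximation) matches the paper's, and your heuristic for the origin of the constant is essentially the mechanism in the paper's smooth-loop computation: there the constant appears as $\frac{2}{\pi}\int_{S^1\sqcup S^1}k_0\s \dd l_0 = 4\s_f(0)-4\s_g(\infty)$ via the mean value property of the harmonic functions $\s_f,\s_g$, giving $+4\pi\log\abs{f'(0)}-4\pi\log\abs{g'(\infty)}$ (note your predicted sign is reversed, and the singularities of the M\"obius factors sit on $S^1$, not at $0$ and $\infty$). The paper routes the smooth case through the determinant functional $\mc H$ and two applications of the Polyakov--Alvarez formula rather than a bare Green's identity, but since Polyakov--Alvarez is itself proved by Stokes-type computations the two derivations are close cousins; a direct proof of \eqref{eq_integral_identity} for smooth loops along your lines is plausible and would bypass the determinants, though you have only sketched it.

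There are, however, two genuine gaps in your reduction. First, your deduction of the implication $[\g]\in T_0(1)\Rightarrow I^L(\g)<\infty$ rests on the remark that the identity of Theorem~\ref{thm_loop_identity} persists when $I^L(\g)=\infty$; but in the paper that remark is justified by the characterization of Weil--Petersson quasicircles via their weldings, i.e.\ by Theorem~\ref{thm_mr4} itself together with \cite{Shen2017}, so invoking it here is circular. Theorem~\ref{thm_loop_identity} as proved only gives $J(h)=I^L(\g)$ under the hypothesis $I^L(\g)<\infty$, and nothing you establish excludes $J(h)<\infty$ while $I^L(\g)=\infty$. Second, ``density and continuity of both sides'' is not available: neither $I^L$ nor ${\bf S_1}$ is continuous under uniform convergence of loops, only lower semicontinuous, so the two implications require two different approximation schemes, each pairing a sequence along which one functional provably converges with lower semicontinuity of the other. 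When ${\bf S_1}(\g)<\infty$ one takes $\g_n=f(c_nS^1)$, uses the Takhtajan--Teo convergence ${\bf S_1}(\g_n)\to{\bf S_1}(\g)$ (Lemma~\ref{lem_TT_cor}) and lower semicontinuity of $I^L$ --- this is also what repairs the circularity above, since it outputs $I^L(\g)\le {\bf S_1}(\g)/\pi<\infty$ directly. When $I^L(\g)<\infty$ one must instead mollify the driving function, use $I^L(\g_n)\to I^L(\g)$ and quasicircle compactness to obtain uniform convergence, and apply lower semicontinuity of ${\bf S_1}$ (Lemma~\ref{lem_lower_semi_S1}). In particular the theorem does not reduce to a single complex-analytic identity between the two domain integrals: the second scheme genuinely uses the Loewner-chain description, and without it you cannot certify ${\bf S_1}(\g)\le\pi I^L(\g)$ for a general finite-energy curve.
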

This provides therefore another characterization of $T_0(1)$ and a new viewpoint on its K\"ahler potential (or alternatively a way to look at the Loewner energy). 

Again the root-invariance (and also the reversibility) of the loop energy can be viewed as a corollary of this result, because there is no more parametrization involved in the definition of ${\bf S_1} ( [\g] )$. Note that we require no regularity assumption on $\g$ in the above identity.

\medbreak
The paper is structured as follows: Section~\ref{sec_weak_J} to Section~\ref{sec_loop} are devoted to the proof of Theorem~\ref{thm_loop_identity} as we described above, from which we derive in Section~\ref{sec_detz} the identity with determinants (Theorem~\ref{thm_energy_determinant}) for smooth loops. In Section~\ref{sec_WP}, by choosing a particular metric in the identity of Theorem~\ref{thm_energy_determinant}, we deduce Theorem~\ref{thm_energy_liouville} which relates the Loewner energy to the the Weil-Petersson Teichm\"uller space, via approximation of finite energy curves by smooth curves. In Section~\ref{informal} we gather informal discussions on how we are led to Theorem~\ref{thm_loop_identity}.
\medbreak

{\bf Acknowledgements}  
   I would like to thank Wendelin~Werner for numerous inspiring discussions as well as his help during the preparation of the manuscript. I also thank Steffen Rohde, Yuliang Shen, Lee-Peng Teo, Thomas Kappeler, Alexis Michelat and Tristan Rivi\`ere for helpful discussions, and the referee for many constructive comments. This work is supported by the Swiss National Science Foundation grant \# 175505.

   \section{Preliminaries and notation} \label{sec_prelim}
   
   Through out the paper, a \emph{domain} means a simply connected open subset of $\m C $ whose boundary can be parametrized by a non self-intersecting continuous curve (not necessarily injective). We orient and parametrize this boundary so that it winds anti-clockwise around the domain.
When the boundary is a Jordan curve then we say that the domain is a \emph{Jordan domain}.

We first recall that a real-valued function $f$ defined on the compact interval $ [a,b]$ is absolutely continuous (AC) if 
there exists a Lebesgue integrable function $g$ on $[a,b]$, such that
        \[f(x) = f(a) + \int_a^x g(t) \dd t, \quad \text{for } x \in [a,b].\]
It is elementary to check that this is equivalent to any of the following two conditions (see \cite{AthereyaLahiri}~Sec.~4.4):
  \begin{enumerate}[(\text{AC}1),leftmargin=1cm]
     \item \label{AC_interval} For every $\vare>0$, there is $\d >0$ such that whenever a finite sequence of pairwise disjoint sub-intervals $(x_{k},y_{k})$ of $[a,b]$ and
    $\sum_{k}(y_{k}-x_{k})<\d$,
then
\[ \sum_{k}|f(y_{k})-f(x_{k})|<\vare.\]
     \item \label{AC_derivative} $f$ has derivative almost everywhere, the derivative is Lebesgue integrable, and 
     \[f(x) = f(a) + \int_a^x f'(t) \dd t, \quad \text{for } x \in [a,b].\]
  \end{enumerate}
 A function $f$ defined on a non-compact interval is said to be AC if $f$ is AC on all the compact sub-intervals.

 We now generalize the definition of the Loewner energy of a chord $\g$ in $(D,a,b)$ that we gave in the introduction to the case of chords that start at $a$ 
 but do not make it all the way to $b$. The steps of the definition are exactly the same: 
 \begin {itemize} 
  \item First, consider the case of the upper half-plane, and consider a finite simple chord $\g := \g [0,T]$, parametrized by its 
  half-plane capacity. We then let $W$ be the driving function of the chord, and we 
  set $$
I_{\HH,0,\infty} (\g  [0,T] ) :=
     \frac{1}{2} \int_0^T W'(t)^2 \dd t \quad  \text{ when $W$ is absolutely continuous}
$$ 
and $
I_{\HH,0,\infty} (\g  [0,T] ) = \infty$ if $W$ is not AC.
Sometimes with a slight abuse of notation, we denote also the above quantity by $I(W)$.
\item We note that this definition of the energy of the chord $\gamma [0,T]$ is invariant under scaling,  
 so that for any conformal map $\phi$ from $\HH$ onto some simply connected domain $D$, 
 we can define $$I_{D, a, b} ( \phi \circ \g [0,T] ) := I_{\HH, 0, \infty} (  \g [0,T] ),$$
 where $a = \phi (0)$ and $b= \phi (\infty)$. 
 \end {itemize}

Let us list some other properties of finite Loewner energy curves: If $\g$ has finite energy in $(D,a,b)$ and is parametrized by capacity 
(the parametrization pulled back by the uniformizing map $\phi$), then
\begin{itemize}
  \item $I_{D,a,b} (\g) = 0$ if and only if $\g$ is contained in the conformal geodesic in $D$ from $a$ to $b$, \ie $ \g = \phi^{-1} (i[0,s])$ for some $s \in [0,\infty]$.
  \item $\g$ is a rectifiable simple curve, see \cite[Thm.~2.iv]{FrizShekhar2017}. 
  \item $\g$ is a quasiconformal curve, that is the image of the conformal geodesic in $D$ between $a$ and $b$ under a quasiconformal map from $D$ onto itself fixing $a$ and $b$. 
  In particular,  $b$ is the only boundary point hit by $\g_t$ and it happens only when $t = T =  \infty$, see \cite[Prop.~2.1]{wang_loewner_energy}. 
  \item $I$-\emph{Additivity}: Since  $\forall t \leq T$,
  $$\frac{1}{2} \int_0^T W'(r)^2 \dd r  =  \frac{1}{2} \int_0^t W'(r)^2 \dd r  + \frac{1}{2} \int_t^T W'(r)^2 \dd r, $$
  it follows from the definition of the driving function and the Loewner energy that
  \begin {equation}
   \label {IAdd}
I_{D,a,b} (\g [0,T]) = I_{D,a,b} (\g [0,t]) + I_{D\setminus \g[0,t], \g_t, b} (\g[t,T]).
\end {equation}
In particular, if $W$ is constant on $[t,T]$, $\g[t,T]$ is contained in the conformal geodesic of $D \setminus \g[0,t]$ from $\g_t$ to $b$.
  \item $\g$ has no corners, see \cite[Sect.~2.1]{LoopEnergy}.
  \item $\g$ need not to be $C^1$, see the example of slow spirals in \cite[Sect.~4.2]{LoopEnergy}.
\end{itemize}

From now on in this section, 
we restrict ourselves in the domain $(D,a,b) = (\S, 0, \infty)$ where $\S = \m C \setminus \m R_+$. We will abbreviate $I_{(\S, 0, \infty)}$ as $I$. We choose $\sqrt {\cdot}$, the square root map taking
values in the upper half-plane $\m H$, to be the uniformizing conformal map of $(\S, 0, \infty)$, so that the capacity of a bounded hull in $\S$, as well as the driving function of Loewner chains in $(\S, 0, \infty)$ are well-defined (and not up to scaling any more).

The following result is the counterpart of Theorem~\ref {thm_chord_int_as_thm} for chords that do not make it all the way to infinity (\ie $T <\infty$):  

\begin{prop}\label{prop:eq_chord_1}

Let $\g$ be a finite energy simple curve in $(\S, 0, \infty)$,
    \begin{equation}
   I(\g [0,T] ) = \frac{1}{\pi} \int_{\S \setminus \g [0,T]}\abs{\frac{h_T''(z)}{h_T'(z)}}^2 \dd z^2,
    \end{equation}
    where $h_T : \S \setminus  \g \to \S $ is the conformal mapping-out function of $\g [0,T]$, such that $h_T(\g_T) = 0$ and $h_T (z) = z + O(1)$ as $z \to \infty$. 
    \end{prop}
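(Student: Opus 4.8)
The plan is to prove the identity by isolating an additivity property of the right-hand side, verifying the identity in an explicit model case, and then closing the general case by approximation. Write $J(\g[0,T]) := \frac{1}{\pi}\int_{\S\setminus\g[0,T]}\abs{h_T''/h_T'}^2 \dd z^2$ for the candidate functional. The first ingredient, already available by this point (Corollary~\ref{cor_strong_J_add}), is the strong $J$-additivity: along the Loewner flow $J$ obeys exactly the same decomposition as the Loewner energy in \eqref{IAdd}, so that $J(\g[0,T]) = J(\g[0,t]) + J_{\S\setminus\g[0,t]}(\g[t,T])$, where the second term is the same integral computed in the slit domain $\S\setminus\g[0,t]$ after mapping out by $h_t$. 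This is the analogue of $I$-additivity and is what will allow me to localise the integral along the curve.

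The second ingredient is the identity $I=J$ for chords whose driving function is piecewise linear. This follows from the weak $J$-additivity of Section~\ref{sec_weak_J} together with the infinitesimal computation of Section~\ref{sec_linear}: for a linear driving function the mapping-out function is essentially explicit, and one checks by differentiating in the capacity time $t$ that $\frac{\dd}{\dd t}J(\g[0,t])$ agrees with $\frac{\dd}{\dd t}I(\g[0,t]) = \frac12 W'(t)^2$, so the two functionals coincide after integrating from the common value $0$ at $t=0$.

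With these in hand I would finish by approximation. Given a finite-energy chord $\g$ with absolutely continuous driving function $W$ on $[0,T]$, take the piecewise linear interpolations $W_n$ of $W$ on a sequence of refining partitions; then $W_n'\to W'$ in $L^2([0,T])$, so $I(W_n)=\frac12\int_0^T (W_n')^2 \to I(W)$, and $W_n\to W$ uniformly. Consequently the associated chords $\g_n$ converge to $\g$ in the Carath\'eodory sense, and the mapping-out functions $h_T^n$ converge to $h_T$ locally uniformly on $\S\setminus\g$, hence so do their logarithmic derivatives $h_T^n{}''/h_T^n{}'$ by the Weierstrass convergence theorem. The linear case gives $I(\g_n)=J(\g_n)$ for each $n$, so it only remains to pass to the limit $J(\g_n)\to J(\g)$.

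The main obstacle is precisely this last convergence: pointwise (locally uniform) convergence of the densities $\abs{h_T^n{}''/h_T^n{}'}^2$ does not by itself give convergence of their integrals over the unbounded domain, and the derivatives may degenerate near the tip $\g_T$, near the root $0$, and near $\infty$. This is where strong $J$-additivity does the work: it lets me split off short initial and terminal sub-chords and bound their $J$-contribution by the Loewner energy of the corresponding pieces of $W$, which is uniformly small by the $L^2$-convergence $W_n'\to W'$, thereby furnishing the uniform integrability (tightness) needed at the endpoints, while the remaining bulk contribution converges by the locally uniform convergence of the maps. Combining the three limits yields $I(\g)=\lim_n I(\g_n)=\lim_n J(\g_n)=J(\g)$, which is the asserted identity.
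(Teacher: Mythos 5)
Your setup (strong $J$-additivity, the identity in the piecewise-linear case, and approximation of $W$ by piecewise-linear drivers with $W_n'\to W'$ in $L^2$) matches the paper's, and you correctly isolate the crux: passing to the limit $J(\g_n)\to J(h_T)$. But the mechanism you propose for that limit does not close the gap. Splitting off short initial and terminal sub-chords and bounding their $J$-contribution by the Loewner energy of the corresponding pieces of $W$ only controls possible concentration of $\abs{h''/h'}^2$ near the root and near the tip. The real obstruction is that the domains $\S\setminus\g_n$ vary with $n$: the locally uniform convergence of $h_T^{n}{}''/h_T^{n}{}'$ holds only on compact subsets of $\S\setminus\g$, i.e.\ away from the \emph{entire} curve, and Dirichlet mass of $\s_{h^{(n)}}$ can in principle concentrate in shrinking neighborhoods of $\g$ itself (not just of its endpoints) and be lost in the limit. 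What compact exhaustion gives for free is only lower semicontinuity, $J(h_T)\le\liminf_n J(h_T^{(n)})=\lim_n I(\g_n)=I(\g)$ --- this is exactly the paper's Lemma~\ref{lem_lower_semi_J} and Corollary~\ref{cor_IJ_comparison}, and it yields one inequality. Your argument supplies no bound in the other direction: $J$ is not upper semicontinuous under this convergence without a genuinely new input, and endpoint tightness is not that input.

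The paper's proof of the remaining inequality $I(\g)\le J(h_T)$ is the idea your proposal is missing. Set $a(t):=J(h_t)$ and $b(t):=I(\g[0,t])$. Both are additive, $b$ is AC by definition of finite energy, and the comparison $J\le I$ applied to every sub-chord shows (via (AC1)) that $a$ is AC with $a'\le b'$ a.e. For the reverse pointwise inequality at a Lebesgue point, say $t=0$ after rescaling, one concatenates $n$ copies of $W|_{[0,1/n]}$ to produce drivers $W^{(n)}$ converging uniformly to the linear function $t\mapsto tW'(0)$; lower semicontinuity and $J$-additivity then give $J(h^{\infty})\le\liminf_n J(h^{(n)})=\liminf_n n\,a(1/n)=a'(0)$, while the linear case gives $J(h^{\infty})=W'(0)^2/2=b'(0)$. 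The point is that the roles are reversed so that the \emph{nice} (linear) curve is the limit and lower semicontinuity is invoked in the only direction in which it holds. Integrating $a'\ge b'$ recovers $I\le J$. To repair your outline you would need to replace the ``uniform integrability at the endpoints'' step by an argument of this kind, or by some other genuine proof of $\limsup_n J(\g_n)\le J(\g)$ along your approximating sequence.
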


Note that Proposition~\ref{prop:eq_chord_1} is weaker than Theorem~\ref{thm_chord_int_as_thm}. Indeed, if we consider $W$ as in 
Proposition~\ref{prop:eq_chord_1} and then defines $\tilde W$ on all of $[0, \infty)$ by $\tilde W(t)  := W ( \min (t, T))$, then $\tilde W$ does generate the chord 
$\ad \g$ from $0$ to infinity in $\S$ that coincides with $\g$ up to time $T$ and then continues along the conformal geodesic from $\g_T$ to infinity in $\S \setminus 
\g [0,T]$ (see Figure~\ref{fig_extension}). 

 \begin{figure}[ht]
 \centering
 \includegraphics[width=0.7\textwidth]{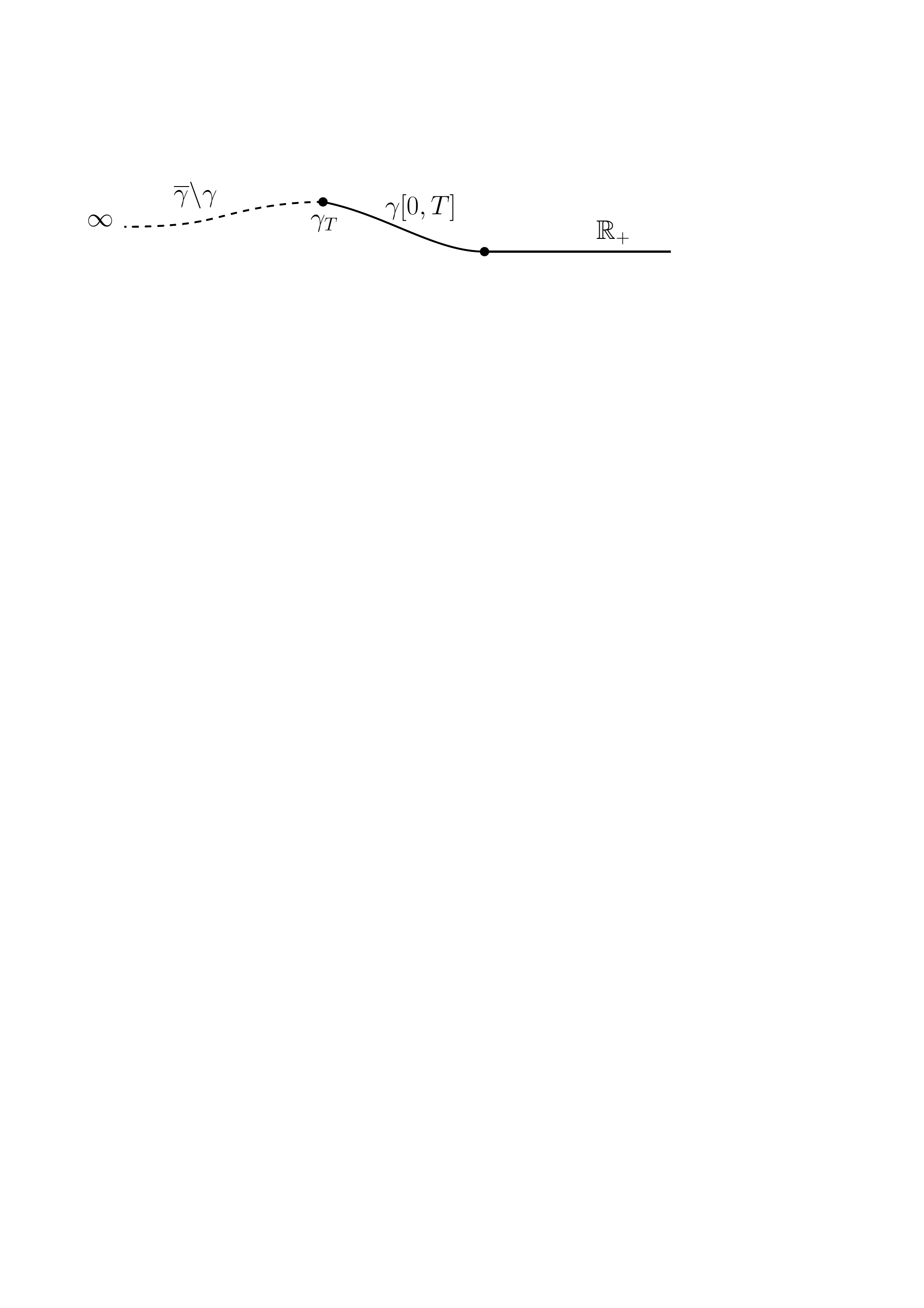}
 \caption{\label{fig_extension} The infinite capacity curve $\ad \g$ is the completion of $\g$ by adding the conformal geodesic $\ad \g \setminus \g = h_T^{-1} (\m R_-)$ connecting $\g_T$ to $\infty$ in $\S \setminus \g[0,T]$.
} 
 \end{figure}

It is easy to see that the restriction of $h_T$ to $\S \setminus \ad \g$ is an admissible choice for the conformal map in Theorem~\ref {thm_chord_int_as_thm}, which maps $\S \setminus \ad \g$ to two half-planes, so that Proposition \ref{prop:eq_chord_1} is 
a rephrasing of Theorem~\ref {thm_chord_int_as_thm} for $\ad \gamma$. 
However, we will explain how it is in fact possible to deduce Theorem~\ref{thm_chord_int_as_thm}  from Proposition~\ref{prop:eq_chord_1} by letting $T \to \infty$ in Section~\ref{sec_loop} while proving the more general Theorem~\ref{thm_loop_identity} for simple loops. 
We will therefore 
aim at establishing Proposition~\ref{prop:eq_chord_1} which is completed in Section~\ref{sec_approx_finite_chord}. 

\medbreak 
In the sequel we will denote the right-hand side of Proposition~\ref{prop:eq_chord_1} by $J(h_T)$.
Note that
$$J (h_T)= \frac{1}{\pi} \int_{\S\setminus \g} \abs{\frac{h_T''(z)}{h_T'(z)}}^2 \dd z^2=  \frac{1}{\pi} \int_{\S\setminus \g} \abs{\nabla \sigma_{h_T}(z)}^2 \dd z^2$$ 
is the Dirichlet energy of
$$\sigma_{h_T} (z) := \log \abs{h_T'(z)}.$$

   It is worthwhile noticing that this energy is the same for $h=h_T$ as for its inverse map $\varphi = h^{-1}$.  
   More precisely, one has
      $\sigma_h \circ \varphi  = - \sigma_{\varphi}  $ and 
     \begin{align} \label{eq_inverse_J}
     \begin{split}
     \frac{1}{\pi} \int_{\S}\abs{ \nabla \sigma_{\varphi}(z)}^2 \dd z^2 & = \frac{1}{\pi} \int_{\S}\abs{ \nabla (\sigma_h \circ \varphi (z))}^2 \dd z^2 \\
     & = \frac{1}{\pi} \int_{\S }\abs{ \nabla \sigma_h }^2(\varphi(z)) \abs{\varphi'(z)}^2 \dd z^2  \\
     &= \frac{1}{\pi} \int_{\S \setminus \g }\abs{ \nabla \sigma_h}^2 (y) \dd y^2.
     \end{split}
     \end{align}
     
We will first consider regular enough curves in the proof of Proposition~\ref{prop:eq_chord_1}, the following theorem is useful which states that the regularity of the curve is characterized by the regularity of its driving function:
recall that $C^{\a}$ is understood as the H\"older class $C^{k ,\b}$, where $k$ is the integer part of $\a$ and $\b = \a - k$, that are $C^{k}$ functions with  $\b$-H\"older continuous $k$-th derivative.
 \begin{theorem}[see \cite{LoopEnergy,carto-wong}] \label{thm_driving_regularity_eq} For $1<\a<2$, $\a \neq 3/2$,
A simple curve $\g$ is $C^{\a}$ if and only if  it is driven by a $C^{\a - 1/2}$ function. 
\end{theorem}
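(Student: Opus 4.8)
The statement is a local regularity correspondence, and the natural strategy is to read both implications off the behavior of the Loewner flow at the tip of the slit, where the half-order shift originates. Parametrize $\g$ by half-plane capacity, write $g_t$ for the mapping-out function of $\g[0,t]$ and $f_t := g_t^{-1}$, so that $W(t) = g_t(\g_t)$ and $\g_t = f_t(W(t))$, the boundary values being taken in the Carath\'eodory sense. The single geometric fact behind the whole result is that near the tip the slit domain $\HH \setminus \g[0,t]$ subtends angle $2\pi$ while its image $\HH$ subtends angle $\pi$ at $W(t)$, so that $g_t$ is a square root there: $g_t(z) - W(t) \sim a(t)\,(z - \g_t)^{1/2}$. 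Equivalently $f_t'$ has a simple zero at $W(t)$ and $f_t(W(t) + \z) = \g_t + c_1(t)\z^2 + c_2(t)\z^3 + \cdots$ (to the order permitted by the regularity of $\g$) with $c_1(t) \neq 0$. Passing to the variable in which $f_t$ becomes conformal and non-degenerate at the tip — i.e.\ extracting this square root — turns every estimate below into one in which the singular Loewner kernel $2/(w - W(t))$ is exactly compensated by the zero of $f_t'$, and it is this compensation that shifts regularity by one half.

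For the implication $W \in C^{\a - 1/2} \donc \g \in C^{\a}$, I would insert the desingularized map into the inverse Loewner equation $\partial_t f_t(w) = - f_t'(w)\,2/(w - W(t))$ to obtain a \emph{non-singular} evolution for the conformal data at the tip; in particular $\partial_t \g_t = -4 c_1(t)$, the $W'$-term dropping out precisely because $f_t'(W(t)) = 0$. Feeding $W \in C^{\a - 1/2}$ into this system, together with standard boundary regularity of $f_t$ along the smooth part of the boundary, should yield that the trace is $C^{\a}$ as a geometric arc. One must be careful that the capacity parametrization itself is \emph{not} the smooth one — already for $W \equiv 0$ the vertical slit is $\g_t = 2i\sqrt t$, which is only $C^{1/2}$ at $t = 0$ — so the $C^{\a}$ conclusion concerns the arclength parametrization and requires comparing the two parametrizations.

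For the converse $\g \in C^{\a} \donc W \in C^{\a - 1/2}$, I would run the argument through the forward map $g_t$. If $\g$ is a $C^{\a}$ arc then, away from the tip, the boundary of $\HH \setminus \g[0,t]$ consists of $C^{\a}$ arcs (the two sides of the slit and the real line), and the Kellogg--Warschawski boundary regularity theorem gives that $g_t$ is correspondingly regular up to the boundary, with bounds that I would have to make uniform in $t$. At the tip the square root intervenes again: writing $W(t) = g_t(\g_t)$ and using the $(\cdot)^{1/2}$ behavior of $g_t$ there shows that $W$ loses exactly one half derivative relative to $\g$, giving $W \in C^{\a - 1/2}$ (so $W \in C^{0,\a - 1/2}$ when $1 < \a < 3/2$ and $W \in C^{1,\a - 3/2}$ when $3/2 < \a < 2$).

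The main obstacle is the tip: all estimates degenerate there, and both the square-root desingularization and the uniform-in-$t$ control of the conformal maps near the moving tip must be carried out carefully to extract the \emph{sharp} exponent rather than a lossy one. This is also where the hypothesis $\a \neq 3/2$ enters, since then $\a - 1/2 = 1$ is an integer and $C^1$ lies outside the scale $C^{k,\b}$, $\b \in (0,1)$, used here, so the borderline case genuinely behaves differently and is excluded. Finally, I would justify the reduction of the global statement to this local analysis by the composition (domain-Markov) structure of the Loewner flow, which exhibits $g_s \circ g_t^{-1}$ as a Loewner chain with shifted driving function and lets one localize near any chosen point of the trace.
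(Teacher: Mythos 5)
First, a point of comparison: the paper does not prove Theorem~\ref{thm_driving_regularity_eq} at all --- it is quoted from \cite{LoopEnergy} (for the direction $\g\in C^{\a}\donc W\in C^{\a-1/2}$) and \cite{carto-wong} (for the converse). Measured against those proofs, your strategy is the right one and matches theirs in outline: the half-order shift does come from the square-root behavior of $g_t$ at the tip, the direction from $W$ to $\g$ is run through the inverse flow $\partial_t f_t(w)=-2f_t'(w)/(w-W(t))$ with the singular kernel cancelled by the zero of $f_t'$ at $W(t)$, the converse is run through the forward map with Kellogg--Warschawski away from the tip, and your explanation of why $\a=3/2$ is excluded (the target exponent $\a-1/2=1$ leaves the H\"older scale, and the correspondence genuinely degenerates there) is the correct one. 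Your identity $\partial_t\g_t=-4c_1(t)$ is also correct, though note that for $1<\a<3/2$ the function $W$ is not differentiable, so the chain-rule splitting $\partial_t f_t(W(t))+f_t'(W(t))W'(t)$ must be replaced by a difference-quotient argument using $|f_t(W(t+s))-f_t(W(t))|=O(|W(t+s)-W(t)|^2)=O(s^{2\a-1})=o(s)$.

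That said, as a proof the proposal stops exactly where the theorem's content begins, and the two missing steps are not routine. (i) In the direction $W\in C^{\a-1/2}\donc\g\in C^{\a}$, you assume the expansion $f_t(W(t)+\z)=\g_t+c_1(t)\z^2+\cdots$ ``to the order permitted by the regularity of $\g$'' --- but that regularity is the conclusion, not a hypothesis. The actual work in \cite{carto-wong} is to establish, from the driving function alone, the existence and $t$-regularity of the desingularized expansion (equivalently, sharp estimates on $f_t'(W(t)+iy)$ as $y\downarrow0$, uniformly in $t$), and then to convert $t$-regularity of $c_1(t)$ into $C^{\a}$-regularity of the arclength-parametrized trace. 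None of this follows from ``feeding $W$ into the system''. (ii) In the converse direction, writing $W(t)=g_t(\g_t)$ does not by itself say anything about the time-increments of $W$: one needs a quantitative lemma bounding $|W(t+s)-W(t)|$ by the geometry of the mapped-out curve $g_t(\g[t,t+s])-W(t)$ (e.g.\ by its horizontal oscillation), and then one must show that the $C^{\a}$ modulus of $\g$ at $\g_t$, distorted by the square-root singularity of $g_t$, produces a curve of diameter $\asymp\sqrt s$ with horizontal spread $O(s^{\a-1/2})$. That lemma is the engine of the proof in \cite{LoopEnergy}, and for $3/2<\a<2$ one additionally needs a second-order version to obtain differentiability of $W$ rather than just a H\"older increment bound. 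So: right mechanism, right references, but the two quantitative lemmas that constitute the theorem are still to be supplied.
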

It allows us to deduce the regularity of the completed chord $\ad \g$ from the regularity of~$\g$.
\begin{cor}\label{cor_regularity_extension} 
  If $T < \infty$, $0< \a \leq 1$ and $\g [0,T]$ is $C^{1,\a}$. Then $\ad \g$ is $C^{1,\b}$, where $\b  = \a$ if $\a < 1/2$, and $\b$ can take any value less than $1/2$ if $\a \geq 1/2$.
\end{cor}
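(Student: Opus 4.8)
The plan is to transfer all the regularity bookkeeping onto driving functions and to invoke Theorem~\ref{thm_driving_regularity_eq} in both directions. Recall from the discussion following Proposition~\ref{prop:eq_chord_1} (and Figure~\ref{fig_extension}) that $\ad{\g}$ is precisely the chord driven by $\tilde W(t) := W(\min(t,T))$, where $W$ is the driving function of $\g[0,T]$: one runs $W$ on $[0,T]$ and then keeps it constant, which continues $\g$ along the conformal geodesic. So it suffices to understand how regular $\tilde W$ is, and then read off the regularity of $\ad{\g}$. Since the claimed output exponent $\b$ is always $<1/2$, the target driving regularity $1/2+\b$ is always $<1$, i.e. we only ever need to produce \emph{pure H\"older} (sub-$C^1$) regularity of $\tilde W$, which is what makes the constant continuation painless.

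Concretely, I would fix any $\b$ with $0<\b<1/2$ and $\b\le\a$, and run the following three steps uniformly in $\a$. First, since $\g[0,T]$ is compact and $C^{1,\a}\subseteq C^{1,\b}$ for $\b\le\a$, the curve is in particular $C^{1,\b}=C^{1+\b}$ with $1+\b\in(1,3/2)$, so Theorem~\ref{thm_driving_regularity_eq} (forward direction, and $1+\b\neq 3/2$) gives $W\in C^{1/2+\b}$ on $[0,T]$, with $1/2+\b<1$. Second, I would check that the constant continuation preserves this: $\tilde W$ is bounded, equals $W$ on $[0,T]$ and is constant afterwards, and for $s<T<t$ one has $|\tilde W(t)-\tilde W(s)|=|W(T)-W(s)|\le [W]_{1/2+\b}\,|T-s|^{1/2+\b}\le [W]_{1/2+\b}\,|t-s|^{1/2+\b}$, so $\tilde W\in C^{1/2+\b}$ globally. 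Third, since $1/2+\b\in(1/2,1)$ differs from $1$, the reverse direction of Theorem~\ref{thm_driving_regularity_eq} yields $\ad{\g}\in C^{(1/2+\b)+1/2}=C^{1,\b}$. Letting $\b$ range gives $\b=\a$ when $\a<1/2$ and any $\b<1/2$ when $\a\ge 1/2$, which is exactly the statement.

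The only genuinely structural point --- and the reason the threshold sits at $1/2$ --- is the behaviour at the junction time $T$, and this is where I expect the sharpness to be the real content rather than the existence. When $\a\ge 1/2$ the driving function $W$ is actually $C^1$ near $T$ with a generically nonzero one-sided derivative $W'(T^-)$, whereas the continuation has derivative $0$; thus $\tilde W$ has a corner at $T$, is Lipschitz but not $C^1$ there, and by Theorem~\ref{thm_driving_regularity_eq} a driving function that is no better than Lipschitz at a point cannot produce a curve better than $C^{1,\b}$ with $\b<1/2$ near the corresponding point. This is why we deliberately discard the extra regularity of $W$ for $\a\ge1/2$ and cannot reach $\b=1/2$; it also explains why we never bump into the excluded exponents ($3/2$ for curves, $1$ for driving functions), since all the exponents we actually use lie strictly inside the admissible windows.
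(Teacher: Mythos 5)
Your proof is correct and follows essentially the same route as the paper: apply Theorem~\ref{thm_driving_regularity_eq} forward to get the H\"older regularity of $W$, observe that the constant continuation $\tilde W(t)=W(\min(t,T))$ preserves H\"older regularity of exponent $\le 1$, and apply the theorem in reverse. Your uniform choice of $\b<1/2$ with $\b\le\a$ from the outset is in fact slightly cleaner than the paper's treatment (which handles $\a=1/2$ by a separate $\vare$-perturbation and is a bit cavalier about the exponent $\min(\a+1/2,1)$ at the junction), but it is the same argument.
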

\begin{proof}
From Theorem~\ref{thm_driving_regularity_eq}, the driving function $W$ of $\g$ is in $C^{\a +1/2}$ if $\a \neq 1/2$. The completion $\ad \g$ of $\g$ by conformal geodesic  is driven by $\tilde W: t \mapsto W(\min (t, T))$ which is $C^{\min(\a + 1/2, 1)}$. 
It in turn implies that $\ad \g$ is in $C^{1,\b}$. 

If $\a = 1/2$, it suffices to replace $\a$ by $1/2 - \vare $ for small enough $\vare$. 
  \end{proof}

\section{Weak J-Additivity} \label{sec_weak_J}
Recall that $I$ satisfies the additivity property (\ref{IAdd}). 
The first step in our proof of the identity $J=I$ in Proposition~\ref{prop:eq_chord_1} will  be to show that $J$ satisfies the same additivity property  in the case of 
regular curves $\g$ (this is our Proposition~\ref{prop_add} which is the purpose of this section).
More precisely, in this section, we only consider the case when $\g \cup \m R_+$ is $C^{1,\a}$ for some $\a >0$. 
From Theorem~\ref{thm_driving_regularity_eq}, this is equivalent to that the extended driving function $\ad W : (-\infty, T] \to \m R$ of $W$, such that $\ad W (t) = 0$ for $t \leq 0$ has H\"older exponent strictly larger than $1/2$. 
In fact, $\ad W$ is the driving function for the embedded arc $\g \cup \m R_+$ rooted at $\infty$ (see Section~\ref{sec_loop} for more details on the extension of driving functions). 

Let us first recall some classical analytic tools:
Let $D$ be a Jordan domain with boundary $\G$ and let $\varphi$ be a conformal mapping from $\m D$ onto $D$. From Carath\'eodory theorem (see e.g. \cite{GM2005Harmonic} Thm.~I.3.1), $\varphi$ can be extended to a homeomorphism from $\overline {\m D}$ to $\overline{D}$. 
Moreover, the regularity of $\varphi$ is related to the regularity of $\G$ from Kellogg's theorem:
\begin{theorem}[Kellogg's theorem, see e.g. \cite{GM2005Harmonic} Thm.~II.4.3] \label{thm_Cna} Let $n \in \m N^*$, and $0< \a <1$. Then the following conditions are equivalent :
  \begin{enumerate}[(a)]
     \item \label{Cna_cond_1}$ \G$ is of class $C^{n,\a}$.
     \item \label{Cna_cond_2} $\arg (\varphi')$ is in $C^{n-1, \a} (\partial \m D)$.
     \item \label{Cna_cond_3} $\varphi \in C^{n,\a} (\ad {\m D})$ and $\varphi' \neq 0$ on $\ad{ \m D}$.
 \end{enumerate}
If one of the above condition holds, we say that $D$ is a $C^{n, \a}$ domain. 
When $\a = 0$, conditions \ref{Cna_cond_1} and \ref{Cna_cond_2} are still equivalent.
\end{theorem}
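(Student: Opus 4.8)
The plan is to establish the cycle $\textrm{(c)}\Rightarrow\textrm{(a)}\Rightarrow\textrm{(b)}\Rightarrow\textrm{(c)}$, isolating the single genuinely analytic step. The implication $\textrm{(c)}\Rightarrow\textrm{(a)}$ is immediate: if $\varphi\in C^{n,\a}(\overline{\m D})$ with $\varphi'\neq 0$ on $\overline{\m D}$, then $\theta\mapsto\varphi(e^{i\theta})$ is a $2\pi$-periodic regular parametrization of $\G$ of class $C^{n,\a}$, which is exactly what it means for $\G$ to be a $C^{n,\a}$ curve. So the content lies in $\textrm{(a)}\Leftrightarrow\textrm{(b)}$ and in $\textrm{(b)}\Rightarrow\textrm{(c)}$.

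For $\textrm{(a)}\Leftrightarrow\textrm{(b)}$ I would argue geometrically, and this is exactly the step that survives at $\a=0$. Writing $\gamma(\theta)=\varphi(e^{i\theta})$ and differentiating gives $\gamma'(\theta)=i e^{i\theta}\varphi'(e^{i\theta})$, so the tangent angle of $\G$ along this parametrization is $\tau(\theta)=\theta+\pi/2+\arg\varphi'(e^{i\theta})$. Thus the hypothesis $\arg\varphi'\in C^{n-1,\a}(\partial\m D)$ is equivalent to $\tau\in C^{n-1,\a}$, which, after passing from the boundary parameter $\theta$ to arc length $s$ (whose derivative is $s'(\theta)=\abs{\varphi'(e^{i\theta})}$), is precisely the statement that the unit tangent of $\G$ is $C^{n-1,\a}$, i.e. that $\G\in C^{n,\a}$. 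Since this manipulation never invokes a conjugation operator, it persists at $\a=0$, which is the last assertion of the theorem. The delicate point is the base case $n=1$: one first needs that $\varphi'$ extends continuously and without zeros to $\overline{\m D}$ (via Lindel\"of's theorem together with the Warschawski estimate for Dini-smooth boundaries) so that $\arg\varphi'$ and the reparametrization $s(\theta)$ are well defined; the higher cases then follow by bootstrapping, the reparametrization preserving $C^{n-1,\a}$ regularity once $\abs{\varphi'}$ is known to be $C^{n-1,\a}$ and bounded below.

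For $\textrm{(b)}\Rightarrow\textrm{(c)}$, the analytic heart, I would proceed as follows. Since $\varphi$ is conformal, $\varphi'\neq0$ on $\m D$, so a single-valued branch of $\log\varphi'=\log\abs{\varphi'}+i\arg\varphi'$ is holomorphic on $\m D$ with boundary imaginary part $\arg\varphi'\in C^{n-1,\a}(\partial\m D)$. By Privalov's theorem --- the conjugation operator maps $C^{n-1,\a}(\partial\m D)$ into itself for $0<\a<1$ --- the harmonic conjugate $\log\abs{\varphi'}$ is also $C^{n-1,\a}$, so $\log\varphi'$ extends to an element of $C^{n-1,\a}(\overline{\m D})$. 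Exponentiating yields $\varphi'\in C^{n-1,\a}(\overline{\m D})$, nonvanishing on $\overline{\m D}$, and integrating gives $\varphi\in C^{n,\a}(\overline{\m D})$.

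The main obstacle is the reliance on Privalov's theorem, which genuinely needs $\a>0$: the conjugation operator is unbounded on $C^0$, so $\log\abs{\varphi'}$ cannot be recovered from $\arg\varphi'$ when $\a=0$. This is exactly why $\textrm{(b)}\Rightarrow\textrm{(c)}$ (and hence the non-vanishing and full $C^{n,\a}$-regularity of $\varphi$) breaks down at $\a=0$ while the purely geometric equivalence $\textrm{(a)}\Leftrightarrow\textrm{(b)}$ remains valid there. A secondary difficulty, feeding into $\textrm{(a)}\Leftrightarrow\textrm{(b)}$, is securing the continuous nonvanishing boundary extension of $\varphi'$ in the base case, which is what makes the change of variables between $\theta$ and arc length legitimate.
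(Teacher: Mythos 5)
The paper does not prove this statement; it is quoted as Kellogg's theorem with a pointer to Garnett--Marshall, so your proposal can only be measured against the standard proof. For $0<\a<1$ your outline is essentially that standard proof: the cycle (c)$\Rightarrow$(a)$\Rightarrow$(b)$\Rightarrow$(c), with the continuous non-vanishing boundary extension of $\varphi'$ supplied by the Lindel\"of/Warschawski theory of Dini-smooth curves, and the passage from $\arg \varphi'$ to $\log\abs{\varphi'}$ supplied by Privalov's theorem. These are the right external inputs and you correctly locate the analytic heart in (b)$\Rightarrow$(c). Be aware, though, that the Warschawski input is itself a theorem of comparable depth to the one being proved: in the Garnett--Marshall treatment the H\"older estimate for $\arg\varphi'$ is obtained directly from the geometry of a $C^{1,\a}$ curve, and the continuity and non-vanishing of $\varphi'$ are then \emph{deduced} via the conjugate function rather than imported. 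Your division of labor is legitimate but parks the hard estimate inside a citation.

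The genuine gap is in the $\a=0$ clause. You claim the equivalence (a)$\Leftrightarrow$(b) ``persists at $\a=0$'' because the tangent-angle identity never invokes the conjugation operator. But your identity does invoke the change of variables between the boundary parameter $\theta$ and arclength $s$, and its legitimacy requires $\abs{\varphi'}$ to be bounded above and below on $\partial \m D$. For a curve that is merely $C^1$ (smooth but not Dini-smooth) this can fail: $\varphi'$ need not be bounded, $s(\theta)$ need not be Lipschitz, and $\tau(s(\theta))$ need not inherit the regularity of $\tau(\cdot)$. The case $n=1$, $\a=0$ is precisely Lindel\"of's theorem, and its proof must interpret ``$\G$ is $C^1$'' as the existence of a continuously turning tangent in the sense of chord directions $\arg\left(\varphi(e^{i\theta_1})-\varphi(e^{i\theta_2})\right)$, arguing via harmonic measure rather than via the arclength reparametrization. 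For $n\ge 2$ and $\a=0$ your bootstrap does recover the claim, since $C^{n}\subset C^{n-1,\b}$ lets you invoke the $\a>0$ case one level down to control $\abs{\varphi'}$; it is only the base case of the theorem's final sentence that your argument, as written, does not reach.
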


 An unbounded domain is said to be $C^{n,\a}$ if there exists a M\"obius map mapping it to a bounded $C^{n, \a}$ domain. 
 Now let $H$ be a $C^{1,\a}$ domain with $0< \a <1$ and $0,\infty \in \partial H$.  
 We parametrize its boundary $\G$ by arclength $\G : \m R \to \partial H$, such that $\G(0)= 0$. Let $\phi$ be a conformal map fixing $\infty$ from $H$ onto $\m H$.
Conjugating by a M\"obius transformation, Theorem~\ref{thm_Cna} implies that $\phi^{-1}$ is $C^{1,\a}$ in all compacts of $\overline {\m H}$. 
Since $(\phi^{-1})'$ is locally bounded away from $0$, the inverse function theorem shows that $\phi$ is also $C^{1,\a}$ in all compacts of $\overline {H}$. 
In particular, both $\s_{\phi} = \log \abs{\phi'}$ and its conjugate $\nu_{\phi} = \arg (\phi ') = \Im \log (\phi')$ are $C^{ \a}$ in all compacts of $\overline {H}$.

 \begin{lem}[Extension of Stokes' formula]\label{lem_bv}
 For a $C^{1,\a}$ domain $H$ as above and  all smooth and compactly supported functions $g \in C_c^{\infty}(\ad{H})$, 
 \begin{equation} \label{eq_curvature}
      \int_{H} \nabla g(z) \cdot \nabla \sigma_{\phi}(z) \dd z^2 = - \int_{\m R} g(\G(s)) \dd \tau(s),
 \end{equation}
 where $\tau(s) := \arg (\G'(s))$ is chosen to be continuous, and the right-hand side is a Riemann-Stieljes integral.
 \end{lem}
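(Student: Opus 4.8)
The identity is, morally, just Green's formula combined with the Cauchy--Riemann equations. Since $H$ is simply connected and $\phi' \neq 0$, the function $f = \log \phi'$ is holomorphic on $H$, so $\sigma_\phi = \Re f$ is harmonic with harmonic conjugate $\nu_\phi = \Im f = \arg \phi'$. If everything were smooth up to $\partial H$ one would integrate by parts, use $\Delta \sigma_\phi = 0$ to kill the bulk term, convert the normal derivative of $\sigma_\phi$ into the tangential derivative of $\nu_\phi$ via Cauchy--Riemann, and finally note that along $\G$ the image of the tangent vector under $\phi$ is horizontal, so that $\nu_\phi(\G(s)) + \tau(s)$ is constant and hence $\dd \nu_\phi = - \dd\tau$ on the boundary. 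The entire difficulty is that $H$ is only $C^{1,\a}$, so $\sigma_\phi$ and $\nu_\phi$ are merely $C^{\a}$ up to $\partial H$: their boundary derivatives do not exist classically, and $\abs{\nabla \sigma_\phi}$ blows up like $\dist(\cdot,\partial H)^{\a-1}$ near the boundary. This is precisely why the right-hand side must be read as a Riemann--Stieltjes integral (well defined because $g \circ \G$ is $C^1$ with compact support, so integration by parts against the merely continuous $\tau$ makes sense), and why a genuine approximation argument is required rather than a bare application of Stokes' theorem.

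My plan is to first transfer the problem to the half-plane, where the approximating domains become trivial. Writing $\psi = \phi^{-1}$, $G = g \circ \psi$ and $\sigma_\psi = \log\abs{\psi'}$, and using the conformal invariance of the Dirichlet pairing together with $\sigma_\phi \circ \psi = -\sigma_\psi$, I get
\[
\int_{H}\nabla g \cdot \nabla \sigma_{\phi}\dd z^{2} = -\int_{\m H}\nabla G \cdot \nabla \sigma_{\psi}\dd w^{2}.
\]
By Kellogg's theorem (Theorem~\ref{thm_Cna}), $\psi \in C^{1,\a}$ up to $\m R$ on compacts and $\psi' \neq 0$, so $G \in C^1$ with compact support in $\ad{\m H}$ and $\sigma_\psi, \nu_\psi = \arg \psi'$ are $C^{\a}$ up to $\m R$. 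Next I apply Green's formula on the smooth domain $\{\Im w > \rho\}$, use the Cauchy--Riemann relation $\partial_y \sigma_\psi = -\partial_x \nu_\psi$ to rewrite the boundary term, and integrate by parts in $u$ (legitimate since both factors are $C^1$ on the interior line $\Im w = \rho$):
\[
\int_{\Im w > \rho}\nabla G \cdot \nabla \sigma_{\psi}\dd w^{2} = \int_{\m R}G(u+i\rho)\,\partial_{x}\nu_{\psi}(u+i\rho)\dd u = -\int_{\m R}\nu_{\psi}(u+i\rho)\,\partial_{x}G(u+i\rho)\dd u.
\]

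The heart of the argument is the passage $\rho \to 0$. For the left-hand side I use that $\abs{\nabla \sigma_\psi} = \abs{\psi''/\psi'} \lesssim \dist(\cdot,\m R)^{\a-1}$ (a Cauchy estimate for the Hölder-continuous $\psi'$), which is in $L^1$ up to the boundary precisely because $\a > 0$; since $\nabla G$ is bounded with compact support, dominated convergence gives convergence to $-\int_{\m H}\nabla G \cdot \nabla \sigma_\psi\dd w^2$. For the last integral I use that $\nu_\psi(\cdot + i\rho) \to \nu_\psi(\cdot + i0)$ and $\partial_x G(\cdot + i\rho) \to \partial_x G(\cdot)$ uniformly on the compact support (the boundary regularity of $\psi$ again), so the limit equals $-\int_{\m R}\nu_\psi(u+i0)\,\partial_x G(u)\dd u$, which is by definition the Riemann--Stieltjes integral $\int_{\m R} G(u)\,\dd[\nu_\psi(u+i0)]$. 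It then remains to identify boundary values: since $\psi'(u)$ is the (speed-scaled) tangent of $\partial H$ at $\psi(u)$ and $\phi$ is orientation preserving, one has $\nu_\psi(u+i0) = \arg \psi'(u) = \tau(s(u))$ and $G(u) = g(\G(s(u)))$, where $s(u)$ is the increasing arclength reparametrization of the boundary. A monotone change of variables in the Riemann--Stieltjes integral then yields $\int_{\m R}G(u)\,\dd[\nu_\psi(u)] = \int_{\m R} g(\G(s))\,\dd\tau(s)$, and combining the displays gives the claimed identity, with the overall sign consistent with the smooth case $\dd\tau = \kappa\,\dd s$ and $\partial_n \sigma_\phi = -\kappa$.

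The main obstacle is thus entirely the low boundary regularity: I expect the delicate points to be (i) justifying the interchange of limit and integral for the boundary term despite having moved only the continuous factor $\nu_\psi$ (not its nonexistent derivative) across, and (ii) confirming the $\dist^{\a-1}$ gradient bound and its $L^1$ integrability, which is what makes the whole scheme close for every $\a \in (0,1)$ rather than only $\a > 1/2$.
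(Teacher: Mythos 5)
Your proof is correct and follows essentially the same route as the paper's: the paper exhausts $H$ by $H_{\vare}=\phi^{-1}(\m H+i\vare)$ and applies the smooth Stokes formula there, which is exactly your half-plane truncation $\{\Im w>\rho\}$ viewed through $\phi$, with the same integration by parts moving the derivative onto $g$ and the same passage to the limit via Kellogg's theorem. The only (welcome) addition is that you justify the convergence of the bulk integral explicitly via the Hardy--Littlewood bound $\abs{\nabla\sigma_{\psi}}\lesssim \dist(\cdot,\m R)^{\a-1}$, a point the paper leaves implicit.
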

 
 The existence of the Riemann-Stieljes integral against $\dd \tau(s)$ is due to a classical result of Young \cite{young1936Holder}: 
 \begin{theorem}[Young's integral] \label{thm_Young}
 If $X \in C^{\a} ([0,T], \m R)$ and $Y \in C^{\b} ([0,T], \m R)$,  $\a +\b >1$, $\a, \b \leq 1$, then the limit below exists and we define
\[\int_0^T Y(u) \dd X(u) : = \lim_{\abs{P} \to 0} \sum_{(u,v) \in P } Y(u ) (X(v) - X(u)) \]
 where $P$ is a partition of $[0,T]$,  $\abs{P}$ the mesh size of $P$. 
The above limit is also equal to 
\[\lim_{\abs{P} \to 0} \sum_{(u,v) \in P } Y(v ) (X(v) - X(u)) \]
and the integration by parts holds:
\[\int_0^T Y(u) \dd X(u)  = Y(T) X(T) - Y(0)X(0)- \int_0^T X(u) \dd Y(u).\]
Moreover, one has the bounds:  for $0 \leq s<t \leq T$,
 \begin{enumerate}[(a)]
     \item $ \abs{\int_s^t Y(u) - Y(s) \dd X(u)} \lesssim \norm{Y}_{\b} \norm{X}_{\a} \abs{t-s}^{\a + \b} $.
     \item $\norm{ \int_0^{\cdot} Y(u) \dd X(u)}_{\a} \lesssim (\abs{Y(0)} + \norm{Y}_\b) \norm{X}_{\a}$,
 \end{enumerate} 
 where $\lesssim$ means inequality up to a multiplicative constant depending only on $\a, \b$ and $T$. 
 \end{theorem}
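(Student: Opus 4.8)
The plan is to follow Young's classical argument, whose heart is a uniform ``maximal'' estimate for the Riemann--Stieltjes sums obtained through a coarsening (point-dropping) procedure; the hypothesis $\a+\b>1$ enters precisely through the convergence of the numerical series it produces. For a partition $P = \{0 = t_0 < t_1 < \cdots < t_n = T\}$ I write $S(P) := \sum_{i} Y(t_{i-1})(X(t_i) - X(t_{i-1}))$. The first step is to record how $S$ changes when a single interior point $t_i$ is deleted (merging $[t_{i-1},t_i]$ and $[t_i,t_{i+1}]$): the increment is exactly $(Y(t_i) - Y(t_{i-1}))(X(t_{i+1}) - X(t_i))$, which is bounded by $\norm{Y}_\b \norm{X}_\a (t_{i+1} - t_{i-1})^{\a+\b}$.

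The combinatorial core is then the following: among the $m-1$ interior points of a partition with $m$ subintervals, since $\sum_{i}(t_{i+1}-t_{i-1}) \le 2T$ (each gap is counted at most twice), there is at least one with $t_{i+1}-t_{i-1} \le 2T/(m-1)$; deleting it changes $S$ by at most $\norm{Y}_\b\norm{X}_\a (2T/(m-1))^{\a+\b}$. Repeatedly deleting the cheapest point reduces $P$, through $m = n, n-1, \dots, 2$, to the trivial partition $\{0,T\}$, for which $S = Y(0)(X(T)-X(0))$. Summing the increments yields the maximal inequality
\[ \abs{S(P) - Y(0)(X(T)-X(0))} \le \norm{Y}_\b\norm{X}_\a (2T)^{\a+\b}\sum_{k\ge1} k^{-(\a+\b)}, \]
a bound uniform in $P$ precisely because $\a+\b>1$ makes the series finite. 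Applied on a subinterval $[s,t]$, this is already estimate (a).

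Next I deduce the existence of the limit. If $Q \supseteq P$ is a refinement, decomposing $S(Q)-S(P)$ over the blocks of $P$ and applying the maximal inequality on each block gives $\abs{S(Q) - S(P)} \lesssim \norm{Y}_\b\norm{X}_\a \abs{P}^{\a+\b-1} T$, which tends to $0$ with the mesh since $\a+\b-1>0$; comparing any two fine partitions through their common refinement then shows the sums form a Cauchy net, so the limit exists and defines $\int_0^T Y \dd X$. The same mesh estimate applied to $\sum_i (Y(t_i)-Y(t_{i-1}))(X(t_i)-X(t_{i-1}))$ shows the right-endpoint sums share this limit. Estimate (b) follows by writing $\int_s^t Y \dd X = Y(s)(X(t)-X(s)) + \int_s^t (Y(u)-Y(s))\dd X(u)$, bounding the first term by $(\abs{Y(0)}+\norm{Y}_\b T^\b)\norm{X}_\a\abs{t-s}^\a$ and the second by (a) together with $\abs{t-s}^{\a+\b}\le T^\b\abs{t-s}^\a$.

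Finally, integration by parts comes from the telescoping identity $Y(t_i)X(t_i)-Y(t_{i-1})X(t_{i-1}) = Y(t_{i-1})(X(t_i)-X(t_{i-1})) + X(t_i)(Y(t_i)-Y(t_{i-1}))$: summing over $P$ gives $Y(T)X(T)-Y(0)X(0)$ on the left, while the two sums on the right converge to $\int_0^T Y\dd X$ and, using the right-endpoint version just established, to $\int_0^T X \dd Y$. I expect the main obstacle to be the coarsening step: making rigorous that deleting the cheapest point at each stage produces the telescoped bound governed by $\sum_k k^{-(\a+\b)}$, and thereby pinpointing exactly where the condition $\a+\b>1$ is indispensable.
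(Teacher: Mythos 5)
Your proof is correct, but there is nothing in the paper to compare it against: the paper does not prove this statement, it quotes it as a classical result with a citation to Young's 1936 paper, and only uses it as a black box (to make sense of the Riemann--Stieltjes integral against $d\tau$ in Lemma 3.1 and to justify integration by parts there). What you have reconstructed is essentially Young's original argument: the point-deletion (coarsening) scheme, where the pigeonhole choice of the cheapest interior point and the convergence of $\sum_{k\ge 1} k^{-(\alpha+\beta)}$ give the uniform maximal inequality, followed by the block-decomposition estimate for refinements to get a Cauchy net, and the telescoping identity for integration by parts. All the individual steps check out: the deletion increment is exactly $(Y(t_i)-Y(t_{i-1}))(X(t_{i+1})-X(t_i))$, the refinement bound $|S(Q)-S(P)|\lesssim \lVert Y\rVert_\beta \lVert X\rVert_\alpha\, |P|^{\alpha+\beta-1}T$ is right, and the right-endpoint sums differ from the left-endpoint ones by $\sum_i (Y(t_i)-Y(t_{i-1}))(X(t_i)-X(t_{i-1}))$, which vanishes in the mesh limit. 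Two minor presentational points: you state estimate (a) before existence of the limit is established, which is harmless because your maximal inequality is uniform over all partitions of $[s,t]$ and hence passes to the limit once existence is proved; and estimate (b) implicitly uses additivity of the integral over adjacent intervals, which follows by restricting to partitions containing the splitting point -- worth a sentence, but routine. Your closing worry about the rigor of the coarsening step is unfounded: the argument as you state it (delete the cheapest point at each stage, bound each change, sum the resulting series) is already rigorous, and it is precisely where $\alpha+\beta>1$ enters.
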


Notice that when $\G$ is smooth, the outer normal derivative $\partial_n \s_{\phi}$ is well defined on the boundary, the above lemma is indeed Stokes' formula
\begin{align*}
\int_{H} \nabla g(z) \cdot \nabla \sigma_{\phi}(z) \dd z^2 & =-\int_{H} g(z) \D \sigma_{\phi}(z) \dd z^2 + \int_{\G} g(z) \partial_n \s_{\phi}(z) \dd l(z) \\
&= \int_{\G} - g(z) k_0(z) \dd l(z) = \int_\G -g(z) \dd \tau, 
\end{align*}
where $k_0(z)$ the geodesic curvature of $\partial H$ at $z$ and $\dd l$ is integration with respect to the arclength on the boundary.
In this case, the first equality is due to the fact that $g$ is compactly supported. 
The second equality follows from the harmonicity of $\s_{\phi}(z)$ and Lemma~\ref{lem:geodesic_curvature} which gives
\begin{equation*} 
\partial_n \s_{\phi} (z) = k (\phi(z)) e^{\s_{\phi} (z)} - k_0(z),
\end{equation*}
where $k(\phi(z))$ is the geodesic curvature of $\partial \m H$ at $\phi(z)$ which is zero. 
Hence, the lemma's goal is to deal with the case where the boundary is less regular as the geodesic curvature is not defined for $C^{1,\a}$ domains.

\begin{proof}  [Lemma~\ref{lem_bv}]
Let  $H_{\vare} = \phi^{-1} (\m H +i\vare)$ be the domain with boundary $\G_{\vare} = \phi^{-1} (\m R + i\vare)$ parametrized by arclength: $s \to \G_{\vare} (s)$. We choose the parametrization such that $\G_{\vare}(0) \to \G(0)$ as $\vare \to 0$.
Since $\G_{\vare}$ is analytic, the remark above applies and one gets
\begin{align*}
\int_{H_{\vare}} \nabla g(z) \cdot \nabla \sigma_{\phi}(z) \dd z^2 &= \int_{\G_{\vare}}  g(z) \partial_n \s_{\phi}(z) \dd z = \int_{\m R}  g(\G_{\vare}(s)) \partial_{s} \nu_{\phi}(\G_{\vare}(s)) \dd s \\
&
= \int_{\m R}  - \partial_{s}  g(\G_{\vare}(s)) \nu_{\phi}(\G_{\vare}(s)) \dd s
\end{align*}
by integration by parts. Since $\phi^{-1}$ is $C^{1,\a}$ in all compacts of $\ad{\m H}$, the bijective map $\psi$ from $\ad{\m H}$ to itself $(x, y) \mapsto (s, y)$ such that $\G_{y} (s) = \phi^{-1} (x+iy)$ is continuous. The inverse of $\psi$ is continuous therefore uniformly continuous on compacts.
Since 
$\G_{\vare} (\cdot) = \phi^{-1} \circ \psi^{-1} (\cdot, \vare)$, we have that $\nu_{\phi}(\G_{\vare}(\cdot))$ converges uniformly  on compacts to $\nu_{\phi}(\G(\cdot))$.
The above integral converges 
as $\vare \to 0$ to
\[\int_{\m R}  - \partial_{s}  g(\G (s)) \nu_{\phi}(\G (s)) \dd s = \int_{\m R}  g(\G (s)) \dd \nu_{\phi}(\G (s)) = \int_{\m R}  - g(\G (s)) \dd \tau (s), \]
since $g(\G (\cdot))$ is at least $C^1$ and $\nu_{\phi}(\G (\cdot))$ is $C^\a$ in the support of $g(\G (\cdot))$, the integration by parts in the first equality holds. In the second equality, we use $\dd \nu_{\phi}(\G(s)) = - \dd\arg (\G'(s)) = -\dd \tau (s)$.
  \end{proof}

Now we would like to apply Lemma~\ref{lem_bv} to the special case of the slit domain $\S \setminus \g $ where $\g \cup \m R_+$ is at least $C^{1,\a}$, $\a >0$. 
A little bit of caution is needed because this is not a $C^{1,\a}$ domain.  However, Corollary~\ref{cor_regularity_extension} shows that the completion $\ad \g$ of $\g$ by conformal geodesic connecting $\g(T)$ and $\infty$ in $\S \setminus \g$ is $C^{1,\b}$ for some $0 < \b < 1/2$.  The complement of $\ad \g \cup \m R_+$ has two connected components $H_1$ and $H_2$, both are unbounded $C^{1,\b}$ domains. 
In fact, the regularity of $\ad \g \cup \m R_+$ at $\infty$ (after being mapped to a finite point via  M\"obius transformation) can be easily computed and is at least $C^{1, 1/2}$, see \cite[Prop.~3.12]{MR07zipper}.
And the mapping-out function~$h = h_T$ maps both domains to $\m H$ and the lower-half plane $\m H^*$ respectively. 

We parametrize $\G = \ad \g \cup \m R_+$ by arclength such that $\G(0) = 0$ and consider it as the boundary of $H_1$ (so that $H_1$ is on the left-hand side of $\G$), we denote by $\tilde \G (s) = \G(-s)$ the arclength-parametrized boundary of $H_2$ (see Figure~\ref{fig_thm_chord}).

For a domain $D \subset \m C$, we introduce the space of smooth functions with finite Dirichlet energy: 
$$\mc D^{\infty} (D) : = \left\{ g \in C^{\infty} (D), \, \int_D \abs{\nabla g (z) }^2\dd z^2 < \infty \right\}.$$

\begin{prop} \label{prop_zero} If a finite capacity curve $\g$ in $(\S, 0, \infty )$ satisfies:
\begin{itemize}
\item $\g \cup \m R_+$ is $C^{1,\a}$ for some $\a >0$, 
\item $\s_h$ is in $\mc D^{\infty}(\S \setminus \g)$.
\end{itemize}
Then for all $g \in \mc D^{\infty}(\S)$,
   \begin{equation*} 
   \int_{\S \setminus \g}\nabla g(z) \cdot \nabla \s_h (z) \dd z^2 = 0.
   \end{equation*}
\end{prop}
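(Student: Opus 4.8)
The plan is to split the integral over the two sides of the curve, evaluate each piece by the extended Stokes formula of Lemma~\ref{lem_bv}, and show that the two resulting boundary integrals cancel; the only genuine work is controlling the behaviour at $\infty$, forced on us by the fact that $g$ is not compactly supported. Since the conformal geodesic $\ad{\g}\setminus\g=h^{-1}(\m R_-)$ has zero area,
\[\int_{\S\setminus\g}\nabla g\cdot\nabla\s_h\,\dd z^2=\int_{H_1}\nabla g\cdot\nabla\s_h\,\dd z^2+\int_{H_2}\nabla g\cdot\nabla\s_h\,\dd z^2,\]
where $H_1,H_2$ are the two components of $\m C\setminus(\ad{\g}\cup\m R_+)$, each a $C^{1,\b}$ domain by Corollary~\ref{cor_regularity_extension}, and $\s_h=\log\abs{h'}=\Re\log h'$ is harmonic on each of them. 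Note that the turning $\dd\tau$ of the boundary $\G=\ad{\g}\cup\m R_+$ is supported on the curved arc $\ad{\g}$, which lies in the open set $\S$ where $g$ is smooth; along the flat slit $\m R_+$ the tangent angle is constant and $\dd\tau\equiv0$, so the boundary term never sees $g$ on $\m R_+$.

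To deal with the non-compact support I would fix a radial cutoff $\eta_R$ with $\eta_R\equiv1$ on $\{\abs z\le R\}$, $\eta_R\equiv0$ on $\{\abs z\ge 2R\}$ and $\abs{\nabla\eta_R}\lesssim 1/R$, so that $g\eta_R$ is an admissible test function. Applying Lemma~\ref{lem_bv} to $g\eta_R$ on $H_1$ and on $H_2$ and using that the boundary of $H_2$ is $\tilde\G(s)=\G(-s)$, whence $\tilde\tau(s)=\tau(-s)+\pi$, a change of variable $u=-s$ in the Young integral shows that the two boundary contributions are exact opposites (geometrically, the turning of $\G$ seen from $H_1$ is the negative of the turning seen from $H_2$). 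Hence
\[\int_{\S\setminus\g}\nabla(g\eta_R)\cdot\nabla\s_h\,\dd z^2=0\qquad\text{for every }R.\]

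Writing $\nabla(g\eta_R)=\eta_R\nabla g+g\nabla\eta_R$, this becomes
\[\int_{\S\setminus\g}\eta_R\,\nabla g\cdot\nabla\s_h\,\dd z^2=-\int_{\S\setminus\g}g\,\nabla\eta_R\cdot\nabla\s_h\,\dd z^2.\]
Because $\nabla g,\nabla\s_h\in L^2$ by hypothesis, the integrand $\nabla g\cdot\nabla\s_h$ lies in $L^1$ and dominated convergence sends the left-hand side to the sought quantity $\int_{\S\setminus\g}\nabla g\cdot\nabla\s_h\,\dd z^2$ as $R\to\infty$. It remains to show the right-hand side tends to $0$. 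Normalising $g$ by an additive constant (harmless, since only $\nabla g$ enters), the standard growth bound for finite Dirichlet energy gives $\norm{g}_{L^2(A_R)}\lesssim R\sqrt{\log R}$ on the annulus $A_R=\{R\le\abs z\le 2R\}$; on the other hand the $C^{1,1/2}$ regularity of $\ad{\g}\cup\m R_+$ at $\infty$ forces $\s_h\to0$ at a H\"older rate, and a conformally invariant scaling estimate for the harmonic extension of this boundary data yields the tail bound $\int_{\abs z>R}\abs{\nabla\s_h}^2\,\dd z^2=O(R^{-1})$. Hence by Cauchy--Schwarz
\[\abs{\int_{\S\setminus\g}g\,\nabla\eta_R\cdot\nabla\s_h\,\dd z^2}\le\norm{g\nabla\eta_R}_{L^2(A_R)}\,\norm{\nabla\s_h}_{L^2(A_R)}\lesssim\frac1R\norm{g}_{L^2(A_R)}\cdot R^{-1/2}\lesssim R^{-1/2}\sqrt{\log R}\xrightarrow[R\to\infty]{}0,\]
which finishes the proof.

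The main obstacle is precisely this last limit: the possible logarithmic growth of $g$ at $\infty$ must be defeated, and finite energy of $\s_h$ alone (which only yields $\norm{\nabla\s_h}_{L^2(A_R)}=o(1)$) is not enough — one genuinely needs the power-law decay $O(R^{-1/2})$, whose source is the $C^{1,1/2}$ regularity of the boundary at $\infty$ (Corollary~\ref{cor_regularity_extension}). The remaining bookkeeping — that the $C^{1,\b}$ regularity of $H_1,H_2$ places us in the hypotheses of Lemma~\ref{lem_bv}, and that the flat slit $\m R_+$ contributes nothing so that $g$ enters only through its smooth restriction to $\ad{\g}$ — is routine once this structure is in place.
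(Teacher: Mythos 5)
Your argument is correct and its core is identical to the paper's: split the integral over the two components $H_1,H_2$ of the complement of $\ad\g\cup\m R_+$, apply Lemma~\ref{lem_bv} on each, and observe that the two Young integrals cancel because $\tilde\tau(s)=\tau(-s)+\pi$, $\dd\tau$ vanishes on $\m R_+$, and $g$ is single-valued across $\ad\g\subset\S$. Where you diverge is the reduction to compactly supported test functions. The paper does this softly: it first proves the identity for $g$ compactly supported and smooth up to $\ad{H_1}$ and $\ad{H_2}$, and then extends to all of $\mc D^{\infty}(\S)$ by density in the Dirichlet seminorm, using only $\nabla\s_h\in L^2$ and Cauchy--Schwarz. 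You instead run an explicit cutoff $\eta_R$ and control the error term $\int g\,\nabla\eta_R\cdot\nabla\s_h$ quantitatively, which forces you to invoke the tail bound $\int_{\abs z>R}\abs{\nabla\s_h}^2\dd z^2=O(R^{-1})$. That bound is true (it follows from the expansion $h_T(z)=z+O(1)$ at $\infty$, whence $h_T''/h_T'=O(\abs z^{-3/2})$ outside a compact set, rather than from a boundary-regularity statement as you suggest), so your proof closes; but your closing remark that the power-law decay is ``genuinely needed'' is an overstatement. With a logarithmic cutoff supported on $\{R\le\abs z\le R^2\}$ one gets $\norm{g\nabla\eta_R}_{L^2}=O(1)$ uniformly in $R$, and then $\norm{\nabla\s_h}_{L^2(\{\abs z>R\})}=o(1)$ --- i.e.\ mere finiteness of the Dirichlet energy of $\s_h$ --- already kills the error term; this is exactly what the paper's density argument exploits, and it is why the paper needs no decay rate at this stage.

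One small point that neither your sharp cutoff nor your discussion addresses: Lemma~\ref{lem_bv} is stated for $g\in C^{\infty}_c(\ad{H_i})$, i.e.\ smooth up to the boundary, whereas a general element of $\mc D^{\infty}(\S)$ is only smooth on the open slit plane and need not extend nicely to the two sides of $\m R_+$. Multiplying by $\eta_R$ fixes compact support but not this. The paper's density step absorbs both issues at once (it approximates $g$ by functions that are compactly supported \emph{and} smooth up to $\ad{H_1}$, $\ad{H_2}$); if you keep your cutoff route you should add a word on why Lemma~\ref{lem_bv} still applies to $g\eta_R$, or perform one further approximation.
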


\begin{proof}
We have already seen that $H_1$ and $H_2$ are $C^{1,\b}$ domains for some $\b > 0$. 

Assume first that $g \in \mc D^{\infty} (\S)$ is compactly supported (in $\m C$) and that both $g|_{H_1}$ and $g|_{H_2}$ can be extended to $C^{\infty} (\ad{H_1})$ and $C^{\infty} (\ad{H_2})$ (with possibly different values along $\m R_+$), then Lemma~\ref{lem_bv} applies:
\begin{align*}
\int_{\S \setminus \g} \nabla g(z) \cdot \nabla \s_h(z) \dd z^2
&=  (\int_{H_1} + \int_{H_2})  \nabla g(z) \cdot \nabla \s_h(z) \dd z^2  \\
& = - \int_{\m R} g(\G(s)) \dd \tau(s) - \int_{\m R} g(\tilde \G(s)) \dd \tilde{\tau}(s) 
\end {align*} 
where $\tau (s) = \arg (\G'(s))$, and $\tilde \tau(s) = \arg(\tilde \G'(s)) = \tau(-s) + \pi$.

Since $\G(s) \in \S$ for $s < 0$, and $\dd \tau (s) = 0$ for $s \geq 0$, it follows that this quantity is also equal to 
 \begin{align*}
 &- \int_{-\infty}^0 g(\G(s)) \dd \tau(s) - \int_{0}^{+\infty} g(\G(-s)) \dd \tau(-s)\\
  = &- \int_{-\infty}^0 g(\G(s)) \dd \tau(s) - \int_{0}^{-\infty} g(\G(t)) \dd \tau(t) =  0.
 \end{align*}
The conclusion then follows from the density of compactly supported functions in $\mc D^{\infty}(\S)$ and the assumption $\s_h \in \mc D^{\infty}(\S \setminus \g)$.
  \end{proof}

 We are now ready to state and prove the $J$-additivity for sufficiently smooth curves: 
 Let $h_t$ be the mapping-out function of $\g[0,t]$ as in the proof of Proposition~\ref{prop_zero}. 
We write $h_{t,s} = h_t \circ h_s^{-1}$ for the mapping-out function of $h_s(\g[s,t])$,  where $s < t$. 

\begin{prop}[Weak J-Additivity]\label{prop_add} If $\g$ is a simple curve in $(\S, 0, \infty)$ such that  $\g \cup \m R_+$ is $C^{1,\a}$.
  For $0\leq s< t\leq T$, if both $J(h_s)$ and $J(h_{t,s})$ are finite, then 
$J(h_t) = J(h_s) + J (h_{t,s})$.
\end{prop}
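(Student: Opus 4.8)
The plan is to reduce the additivity of $J$ to a statement about orthogonality of gradients in the Dirichlet inner product, and then invoke Proposition~\ref{prop_zero} to kill the cross term. Recall that $h_t = h_{t,s}\circ h_s$ by definition of $h_{t,s}$, so by the chain rule $h_t'(z) = h_{t,s}'(h_s(z))\, h_s'(z)$, and taking $\log\abs{\cdot}$ gives the pointwise identity
\begin{equation}\label{eq_sigma_decomp}
\s_{h_t}(z) = \s_{h_{t,s}}(h_s(z)) + \s_{h_s}(z) \qquad \text{on } \S \setminus \g[0,t].
\end{equation}
Writing $u(z) := \s_{h_s}(z)$ and $v(z) := \s_{h_{t,s}}(h_s(z))$, the Dirichlet energy expands as
\[
\pi J(h_t) = \int_{\S\setminus\g[0,t]} \abs{\nabla u + \nabla v}^2 \dd z^2 = \int \abs{\nabla u}^2 + \int \abs{\nabla v}^2 + 2\int \nabla u \cdot \nabla v.
\]
The first term is $\pi J(h_s)$ up to the region $\g[s,t]$ (where $u$ is still defined and harmonic, since $\g[0,s]$ is the only slit removed for $h_s$), and by the conformal invariance of the Dirichlet energy recorded in \eqref{eq_inverse_J} the second term equals $\int_{\S\setminus h_s(\g[s,t])}\abs{\nabla \s_{h_{t,s}}}^2 = \pi J(h_{t,s})$ after the change of variables $y = h_s(z)$. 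So the whole content of the proposition is that the cross term vanishes.

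First I would set up the domains carefully: $u = \s_{h_s}$ is harmonic on $\S\setminus\g[0,s]$, which contains $\S\setminus\g[0,t]$, and in fact $u$ extends harmonically across $\g[s,t]$; meanwhile $v = \s_{h_{t,s}}\circ h_s$ is harmonic on $\S\setminus\g[0,t]$ and, by the finiteness hypotheses together with the identity \eqref{eq_inverse_J}, lies in $\mc D^\infty$ of the appropriate slit domain. To apply Proposition~\ref{prop_zero} I want to view the cross term as $\int_{\S\setminus\g[s,t]} \nabla g \cdot \nabla \s_h$ for a suitable pair, where the role of ``$g$'' is played by a function that is smooth and finite-Dirichlet-energy on the \emph{un-slit} side and the role of ``$\s_h$'' by the log-derivative attached to the slit $\g[s,t]$. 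The natural move is to push everything forward by $h_s$: under $y = h_s(z)$ the cross term $\int_{\S\setminus\g[0,t]} \nabla \s_{h_s}\cdot \nabla(\s_{h_{t,s}}\circ h_s)\dd z^2$ becomes $\int_{\S\setminus h_s(\g[s,t])} \nabla(\s_{h_s}\circ h_s^{-1})\cdot \nabla \s_{h_{t,s}}\dd y^2$, and here $h_s(\g[s,t])$ is a curve emanating from $0$ in $\S$, so Proposition~\ref{prop_zero} applies with $\g$ replaced by $h_s(\g[s,t])$, with $\s_h = \s_{h_{t,s}}$, and with $g = \s_{h_s}\circ h_s^{-1}$.

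The hard part will be verifying the two hypotheses of Proposition~\ref{prop_zero} in this pushed-forward setting, namely that $h_s(\g[s,t])\cup\m R_+$ is $C^{1,\a'}$ for some $\a'>0$ (so that Lemma~\ref{lem_bv} is available) and that $g = \s_{h_s}\circ h_s^{-1}$ is a genuine element of $\mc D^\infty(\S)$, i.e.\ smooth with finite Dirichlet energy on the \emph{full} slit plane $\S$ rather than merely on the slit domain. Regularity of $h_s(\g[s,t])$ should follow from Kellogg's theorem (Theorem~\ref{thm_Cna}) applied to the $C^{1,\a}$ domain bounded by $\g\cup\m R_+$: since $h_s$ is a conformal map of a $C^{1,\b}$ domain it extends $C^{1,\b}$ to the closure, so it preserves the Hölder regularity of the boundary arc. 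The membership $g\in\mc D^\infty(\S)$ is where one must be careful, because $\s_{h_s}$ a priori has finite energy only on $\S\setminus\g[0,s]$, not across the slit $\g[0,s]$; the point is that after removing $h_s(\g[s,t])$ we only need $g$ to be defined and finite-energy on the domain $\S$ itself, and $\s_{h_s}\circ h_s^{-1}$ is harmonic on all of $h_s(\S\setminus\g[0,s]) = \S$, so it is smooth there, with finite Dirichlet energy equal to $\pi J(h_s) < \infty$ by hypothesis and conformal invariance. Once both hypotheses are checked, Proposition~\ref{prop_zero} gives that the cross term is zero, and combining with the two diagonal terms yields $J(h_t) = J(h_s) + J(h_{t,s})$.
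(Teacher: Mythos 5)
Your proposal is correct and follows essentially the same route as the paper: the same decomposition $\s_{h_t}=\s_{h_{t,s}}\circ h_s+\s_{h_s}$, identification of the diagonal terms via conformal invariance of the Dirichlet energy, and elimination of the cross term by pushing forward under $h_s$, recognizing $\s_{h_s}\circ h_s^{-1}=-\s_{h_s^{-1}}$ as a harmonic finite-energy test function on all of $\S$, and invoking Proposition~\ref{prop_zero} for the slit $h_s(\g[s,t])$. The only cosmetic difference is that the paper justifies the regularity of $h_s(\g[s,t])\cup\m R_+$ by the driving-function criterion (Corollary~\ref{cor_regularity_extension}) rather than Kellogg's theorem, but both are valid.
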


\begin{proof}  Let $\g := \g [0,t]$, $\hat \g := h_s(\g[s,t])$.  We write $\s_r(z) = \log \abs{h_{r}' (z)}$ and $\sigma_{t,s} (z)= \log \abs{h_{t,s}' (z)}$.
From
\[\s_t(z) = \log \abs{h_t'(z)} = \log \abs{(h_{t,s} \circ h_s)'(z)} = \s_{t,s}(h_s(z)) + \s_s(z),\]
we deduce
\begin{align*}
\pi J(h_t) = & \, \pi J(h_s) + \int_{\S \setminus \g} \Big| \nabla \s_{t,s} (h_s (z)) \Big|^2 \dd z^2 \\
& + 2 \int_{\S \setminus \g}  \nabla \s_s(z)  \cdot \nabla \s_{t,s} (h_s (z)) \dd z^2.
\end{align*}

The second term on the right-hand side equals to $\pi J(h_{t,s})$ by the conformal invariance of the Dirichlet energy. 
%In fact, it equals to  
%\[ \int_{\S \setminus \g} \Big| \nabla \log \abs{h_{t,s}'} \Big|^2 (h_s (z)) \abs{h_s'(z)}^2  \dd z^2 = \int_{\S \setminus \hat \g} \Big| \nabla \log \abs{h_{t,s}'} \Big|^2 (y) \dd \vol (y).\]
Now we show that the third term vanishes. 
We write it in a slightly different way: it is equal to 
\begin{align*}
 \int_{\S \setminus \g}  - \nabla \s_{h_s^{-1}}(h_s(z)) \cdot \nabla \s_{t,s} (h_s (z)) \dd z^2 =   \int_{\S \setminus \hat \g}  - \nabla \s_{h_s^{-1}}(y)  \cdot \nabla \s_{t,s} (y) \dd y^2.
\end{align*}
The Dirichlet energy of $\s_{h_s^{-1}}$ is equal to $J(h_s)$.
Therefore from the assumption, $ \s_{h_s^{-1}} \in \mc D^{\infty} (\S)$ and $\s_{t,s} \in  \mc D^{\infty} (\S \setminus \hat \g)$. Since $\hat \g \cup \m R_+$ is at least $C^{1,\b}$ with the same $\b$ as in Corollary~\ref{cor_regularity_extension}, the vanishing follows from Proposition~\ref{prop_zero}. 
  \end{proof}

\section{The identity for piecewise linear driving functions} \label{sec_linear}
Let us first prove the identity between the Loewner energy of $\g$ and the Dirichlet energy of $\s_h$ in the special case of curve driven by a linear function:
Let  $\g$ be the Loewner curve in $(\S, 0, \infty)$ driven by the function $W: [0,T] \to \m R$, where $W(t) = \l t$ for some $\l \in \m R$. 
We denote by $(f_t : = g_t - W(t))_{t \in [0,T]}$ the centered Loewner flow in $\m H$ driven by $W$ and $(h_t)_{t \in [0,T]}$ the Loewner flow in $\S$. They are related by
$$h_t (z) = f_t^2(\sqrt z), \quad z \in \m C \setminus \m R_+.$$
In particular the mapping-out function $h$ is equal to $h_T$.  

We use the notations of $\G$ and $\tilde \G$ as in the description prior to Proposition~\ref{prop_zero} to distinguish the two copies of $\g \cup \m R_+$ as the boundary of $\S \setminus \g$. 
We also keep in mind that $\g$ is capacity parametrized and $\G$ is arclength-parametrized. 
We define $\tau(\G(s))$ and $\tau(\tilde \G(s))$ to be a continuous branch of $\arg (\G'(s))$ and $\arg (\tilde \G'(s))$.

\begin{prop} \label{prop_linear} 
Proposition~\ref{prop:eq_chord_1} holds when $\g$ is driven by a linear function. 
\end{prop}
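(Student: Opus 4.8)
Since $W(t)=\lambda t$ the left-hand side is immediate: $I(\g[0,T])=\tfrac12\int_0^T\lambda^2\,dt=\tfrac12\lambda^2 T$, so the entire content of the proposition is the equality $J(h_T)=\tfrac12\lambda^2 T$. My plan is to regard $J(h_t)$ as a function of the capacity time $t$ and to show that it is (absolutely) continuous in $t$ with constant derivative $\tfrac12\lambda^2$. Weak $J$-additivity (Proposition~\ref{prop_add}) already gives the structural input: $t\mapsto J(h_t)$ is nondecreasing and $J(h_t)-J(h_s)=J(h_{t,s})\ge 0$, so once differentiability of $t \mapsto J(h_t)$ is in hand it suffices to identify the rate and integrate it from $0$ to $T$.

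The first concrete step is to write the Loewner flow in the slit plane explicitly. From $h_t(z)=f_t(\sqrt z)^2$ and the centered equation $\partial_t f_t=2/f_t-\lambda$ one obtains the clean autonomous relation $\partial_t h_t(z)=4-2\lambda\sqrt{h_t(z)}$, and differentiating $\log h_t'$ in time yields
\[
\partial_t \log h_t'(z)=\frac{\partial_z\!\left(\partial_t h_t\right)}{h_t'(z)}=-\frac{\lambda}{\sqrt{h_t(z)}}.
\]
Consequently $\sigma_t:=\sigma_{h_t}=\Re\log h_t'$ is harmonic on $D_t:=\S\setminus\g[0,t]$ with harmonic time derivative $\dot\sigma_t=-\lambda\,\Re\bigl(1/\sqrt{h_t}\bigr)$.

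Next I would differentiate the Dirichlet integral $\pi J(h_t)=\int_{D_t}\abs{\nabla\sigma_t}^2\,dz^2$. By the transport theorem this derivative splits into an interior term $\int_{D_t}2\,\nabla\sigma_t\cdot\nabla\dot\sigma_t\,dz^2$ and a term recording the motion of $\partial D_t$, which moves only at the growing tip. Using the harmonicity of $\sigma_t$, Green's identity converts the interior term into a boundary integral, and Lemma~\ref{lem_bv} together with the relation $\partial_n\sigma_t\,dl=-d\tau_t$ (valid because the boundary of $\S$ is flat) lets me replace the normal derivatives of $\sigma_t$ by the turning $d\tau_t$ of the boundary; this vanishes along $\m R_+$, so only a neighbourhood of the tip survives. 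The expected outcome is that every surviving contribution localizes at the tip and collapses to the constant $\tfrac12\lambda^2$, giving $\tfrac{d}{dt}J(h_t)=\tfrac12\lambda^2$ and hence $J(h_T)=\tfrac12\lambda^2 T=I(\g[0,T])$.

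The main obstacle is exactly this tip analysis. Although $h_t$ maps the $2\pi$-corner at the tip $\g_t$ conformally onto the $2\pi$-corner of $\S$ at $0$ (so that $h_t'(\g_t)$ is finite and nonzero), the time derivative $\dot\sigma_t=-\lambda\,\Re(1/\sqrt{h_t})$ blows up like $\dist(z,\g_t)^{-1/2}$ there, and by Corollary~\ref{cor_regularity_extension} the relevant boundary is only $C^{1,\b}$ with $\b<1/2$, so the Young-type estimates of Theorem~\ref{thm_Young} are needed to make sense of the Riemann--Stieltjes boundary integrals against $d\tau_t$. Controlling these singular boundary terms uniformly, justifying differentiation under the integral sign, and checking that the contribution from infinity vanishes (using $h_t(z)=z+O(1)$, so that $\sigma_t$ and $\nabla\sigma_t$ decay at infinity) are the delicate points; this is the sense in which, even with $h$ almost explicit, the computation is not immediate. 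As an independent cross-check one can instead evaluate the fully grown curve directly: Lemma~\ref{lem_bv} gives $\pi J(h_T)=-\int_{\g}\sigma_{h_T}\,d\tau$ (the $\m R_+$ part dropping out), and combining the boundary relation $\arg h_T'=-\tau$ with the explicit linear-driving flow to evaluate $\sigma_{h_T}$ and $\tau$ along $\g$ should reproduce the same value $\tfrac12\lambda^2 T$.
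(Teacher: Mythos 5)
The setup of your proposal is sound as far as it goes: the formulas $\partial_t h_t = 4 - 2\l\sqrt{h_t}$ and $\partial_t \log h_t' = -\l/\sqrt{h_t(z)}$ are correct and are exactly the starting point of the paper's computation, and the reduction of the problem to identifying the constant rate of growth of $t\mapsto J(h_t)$ is legitimate. (In fact you can get that reduction more cheaply than by proving differentiability at every $t$: since the driving function of $h_s(\g[s,t])$ is again $t'\mapsto \l t'$, weak $J$-additivity gives $J(h_t)=J(h_s)+J(h_{t-s})$, so $T\mapsto J(h_T)$ is automatically linear once it is finite, and it suffices to compute the asymptotics of $J(h_T)/T$ as $T\to 0$. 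This is the route the paper takes; note that you must also separately establish $J(h_\vare)<\infty$ before Proposition~\ref{prop_add} applies at all, which your proposal does not address.)

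The genuine gap is that the rate $\l^2/2$ is never actually computed, and the heuristic you offer in its place --- that ``every surviving contribution localizes at the tip'' --- does not appear to be correct. If you carry out your own plan, Green's identity turns $\int_{D_t} 2\nabla\s_t\cdot\nabla\dot\s_t\,\dd z^2$ into a boundary integral of $\dot\s_t\,\partial_n\s_t$; since $\partial_n\s_t=-k_0$ vanishes on $\m R_+$ and the contribution of a small circle around the tip is $O(\vare^{-1/2})\cdot O(\vare)\to 0$, what survives is $2\l\int_{\g[0,t]\sqcup\g[0,t]}\Re\bigl(1/f_t(\sqrt z)\bigr)k_0(z)\,\dd l(z)$ --- an integral of the curvature against a weight spread over the \emph{whole} slit (with the two sides contributing $1/f_t(0^+)$-type and $1/f_t(0^-)$-type terms of opposite sign), not a quantity concentrated at the tip. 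Evaluating this is precisely the hard explicit computation, and it is where the number $\l^2/2$ has to come from. The paper does it differently but concretely: it first justifies the boundary representation $\pi J(h_T)=-\int_{\G\sqcup\tilde\G}\s_{h_T}\,\dd\tau$ (controlling the singularities at $0$ and at $\g_T$ and the decay $\abs{\nabla\s_T}=O(\abs z^{-3/2})$ at infinity), rewrites it as $\frac{\l^2}{4\pi}\int_0^T\bigl(f_{T-t}(0^+)-f_{T-t}(0^-)\bigr)\Im(\partial_t\sqrt{\g_t})\,\dd t$ using the backward flow, and then inserts the explicit small-time expansions $f_t(0^\pm)=\pm2\sqrt t+O(t)$, $\sqrt{\g_t}=2i\sqrt t+O(t)$ of the linear-driving flow to get $J(h_T)=\frac{\l^2}{2}T(1+O(\sqrt T))$, which linearity then upgrades to an exact identity. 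Without an analogue of this explicit evaluation, your argument establishes only that $J(h_T)=cT$ for some constant $c$, not that $c=\l^2/2$.
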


First notice that the function $W(t) = \l t$ for $t\geq 0$ and $W(t) = 0$ for $t \leq 0$ is $C^{0,1}$. 
Therefore,  $\g \cup \m R_+$ is $C^{1,\a}$ for $\a <1/2$ by Theorem~\ref{thm_driving_regularity_eq}. Once we have shown that $J(h_{\vare}) < \infty$ for some $\vare > 0$, the weak $J$-additivity (Proposition~\ref{prop_add}) applies.
We can note that the $J$-additivity and the $I$-additivity imply that $J(h_T)$ and $I(\g[0,T])$ are both linear in $T$, so 
that it suffices to check that $I(\g[0,T]) \sim J(h_T)$ as $T \to 0$.

\begin{proof}
 Notice that $\g$ is in fact  a $C^{\infty}$ curve and it is only in the neighborhood of $0$ the regularity of $\g \cup \m R_+$ is $C^{1,\a}$. Hence, $\s_h$ is $C^{\infty}$ up to the boundary apart from $0$. 
 
 We first show that Stokes' formula applies and $J(h)$ equals to the boundary integral:
 \begin{equation}   \label{eq_stokes}
   J(h) = - \frac{1}{\pi} \int_{\G \sqcup \tilde \G } \sigma_{h}(z) \dd \tau(z) : = \lim_{\vare \to 0} - \frac{1}{\pi}  \int_{ \G \sqcup \tilde \G \setminus B(0, \vare) } \sigma_{h}(z) \dd \tau(z).
    \end{equation}
Since both $\tau$ and $\s_h$ are $C^{\infty}$ away from $0$, for a fixed $\vare >0$, the integral above is well-defined.

We need to be slightly more careful as the boundary of $\S \setminus \g$ is not regular enough at $\g_T$ and $0$  to apply Stokes' formula and $\s_h$ is not smooth up to the boundary to apply directly Lemma~\ref{lem_bv}.
The singularity at $\g_T$ is actually simple to deal with: We extend $\g$ to a $C^{\infty}$ curve $\ad \g$ going to $\infty$, since $\s_h$ is continuous across $\ad \g \setminus \g$, and $\dd \tau (z)$ has opposite sign on (the extended) $\G$ and $\tilde \G$, the sum of the integrals on both copies of $\ad \g \setminus \g$ cancels out.
It then suffices to check that the singularity at $0$ does not affect the application of Stokes' formula.

 For this, we use the Loewner flow to control the asymptotic behavior of $\nabla \s_h$ at $0$.
  The centered forward Loewner flow $f_t (\cdot):= g_t(\cdot) - W(t)$  of the simple curve $\sqrt \g$ in $\m H$ driven by $W(t) = \l t$ satisfies for all $z \in \S$,
     \begin{equation*}
       \partial_t f_t(z) = 2/f_t(z) - W'(t) = 2/f_t(z) - \l.
     \end{equation*}
     The mapping-out function $h_t= f_t^2(\sqrt z)$ for $\g[0,t]$ satisfies
\[\partial_t h_t(z) = 2 f_t(\sqrt z)(2/f_t(z) - \l) = 4 - 2 \l f_t(\sqrt z).\]
Taking the derivative in $z$,
\[h_t'(z) = f_t(\sqrt z) f_t'(\sqrt z)/\sqrt z \text { and } \partial_t h_t'(z) = -\l f_t'(\sqrt z)/\sqrt z. \]
We use the short-hand notation $\sigma_t$ for $\sigma_{h_t}$ and $\sigma_T$ for $\sigma_h$. We have
\begin{equation*}
\partial_t \sigma_t (z) = \Re \left(\partial_t h_t'(z)/h_t'(z)\right) =  - \l \Re( 1/ f_t(\sqrt z)).
\end{equation*}

Therefore for $z \in \m H$,
\begin{align*}
\s_t (z) & = - \l \Re \left (\int_0^t \frac{1}{f_r(\sqrt z)} \right) \dd r\\
& = \frac{-\l}{2} \Re \left(\int_0^t  \partial_r f_r(\sqrt z) + \partial_r W_r \dd r\right) \\
& = -\frac{\l}{2} \Big(\l t +  \Re (f_t(\sqrt z)) - \Re(\sqrt z)\Big).
\end{align*}

In particular as $z \to 0$,
 \[\abs{\nabla \s_T (z)}= \abs{\frac{\l}{2} \left ( \frac{f_T' (\sqrt z)}{2\sqrt z} - \frac{1}{2\sqrt z}\right)}
 =\abs{ \frac{\l}{2} \left ( \frac{h_T' (\sqrt z)}{2f_T (\sqrt z)} - \frac{1}{2\sqrt z}\right)}= O\left (\frac{1}{\abs{\sqrt z}}\right)\]
since $h'$ is bounded on the closure of the $C^{1,\a}$ domain and $f_T(\sqrt z)$ is bounded away from $0$ as $z \to 0$. 
It shows that $\norm{\nabla \s_T}_{L^2(B(0, \vare) )} \to 0$ and the integral of $\s_T \partial_n \s_T$ along a smooth arc of length $\vare$ inside $B(0, \vare) $ go to $0$ as $\vare \to 0$. Hence for every $\d >0$, there exists $\vare > 0$ and a sub-domain $\S_{\vare}$ of $\S \setminus \g$ with smooth boundary which coincides with  $\S \setminus \g$ outside of $B_{\vare}(0)$, such that $\vare \to 0$ when $\d \to 0$,
\[ \abs{J(h_T) - \frac{1}{\pi} \int _{\S_{\vare}} \abs{\nabla \s_T}^2 \dd z^2 }\leq \d,\]
and 
\[\frac{1}{\pi} \abs{\int_{\partial \S_{\vare}} \sigma_T(z) \partial_n \sigma_T(z) \dd l(z)-\int_{ \G \sqcup \tilde \G \setminus B(0, \vare) } \sigma_T(z) \partial_n \sigma_T(z) \dd l(z) } \leq \d.\]

It then suffices to apply Stokes' formula on $\S_{\vare}$. For this, we need to control the decay of $\nabla \s_T$ as $z \to \infty$: Taking the gradient of the expression of $\partial_t \s_t$, one gets:
\[\abs{\partial_t \nabla \sigma_t (z)} = \abs {\frac{\l f_t'(\sqrt z)}{ 2 f_t^2(\sqrt z) \sqrt z}} = O\left( \abs{z}^{-3/2}\right) \]
which implies
\begin{equation}\label{eq_sigma_bound}
%\abs {\sigma_T(z)} = O(\abs{z}^{-1/2}), \text{ and } 
\abs{\nabla \sigma_T (z)} = O(\abs{z}^{-3/2}).
\end{equation}
It allows us to apply Stokes' formula (one can look at the integral on the domain $\S_{\vare} \cap B(0,R)$ and see that the contribution of the contour integral on $\partial B(0,R)$ vanishes as $R \to \infty$), together with the harmonicity of $\s_T$, we have:
\[ \int_{\S_{\vare}} \abs{\nabla \s_T}^2 \dd z^2 = \int_{\partial \S_{\vare}} \sigma_T(z) \partial_n \sigma_T(z) \dd l(z),\]
which yields 
\[\abs{J(h_T) -\int_{ \G \sqcup \tilde \G \setminus B(0, \vare) } \sigma_T(z) \partial_n \sigma_T(z) \dd l(z) } \leq 2 \d. \]
Using $\partial_n \s_T(z) = - \partial_s \tau(z)$ on the smooth boundary of~$\S_{\vare}$, then let $\vare \to 0$ and $\d \to 0$, we obtain the identity \eqref{eq_stokes}.

\medbreak

Now we prove the identity 
\[I(\g) = - \frac{1}{\pi} \int_{ \G \sqcup \tilde \G} \sigma_{h}(z) \dd \tau(z).\]
Similar to the computation of $\s_t(z)$, $\nu_t (z) :=  \Im \log (h_t'(z))$ satisfies
\[\nu_t(z) = -\frac{\l}{2} \Im \int_0^t \partial_r f_r(\sqrt z) \dd r = -\frac{\l}{2} \Big( \Im (f_t(\sqrt z)) - \Im(\sqrt z)\Big).\]

Let $S$ be the total length of $\g [0,T]$. 
A point $\g_t$ on $\g$ can be considered as a point in both $\G$ and $\tilde \G$, and there is $s \geq 0$, such that $\g_t = \G(-s) = \tilde \G(s)$. We deduce from the expression of $\nu_t$, that for $0 \leq s \leq S$,
\[\tau (\G(-s)) = - \nu_T (\g_t) = -\frac{\l}{2} \Im (\sqrt {\g_t} ),\]
\[\dd\tau (\G(-s))  =  \frac{\l}{2} \Im (\partial _t \sqrt {\g_t} ) \dd t. \]
Similarly,
\[\tau (\tilde \G(s)) = - \nu_T (\g_t) +\pi = -\frac{\l}{2} \Im (\sqrt {\g_t} ) + \pi,\]
\[\dd\tau (\tilde \G(s))  =  -\frac{\l}{2} \Im (\partial _t \sqrt {\g_t} ) \dd t. \]
Hence the integral in \eqref{eq_stokes} equals to
\begin{align*}
J(h )  = &- \frac{1}{\pi} \int_{\g_t \in \G} \left(-\frac{\l}{2} \Re (f_T(\sqrt {\g_t}))\right)  \frac{\l}{2} \Im (\partial _t \sqrt {\g_t} ) \dd t \\
& - \frac{1}{\pi} \int_{\g_t \in \tilde \G} \left(-\frac{\l}{2} \Re (f_T(\sqrt \g_t))\right)  \frac{\l}{2} \Im (-\partial _t \sqrt {\g_t} ) \dd t  \\
= & \frac{\l^2}{4\pi} \int_0^T \Big(f_{T-t}(0^+) - f_{T-t}(0^-) \Big)  \Im \left( \partial_t \sqrt{\g_t}\right)\dd t.
 \end{align*}
The second equality holds because of the linearity of the driving function, and $s \mapsto f_s(0^+)>0$ and $s \mapsto f_s(0^-)<0$ are respectively the two Loewner flows starting from $0$. 
We also know that $\sqrt {\g_t}$ satisfies the backward Loewner equation, that is for $t \in (0,T]$,
\begin{equation}\label{eq:backward_flow}
\partial_t \sqrt {\g_t} = -2/\sqrt{\g_t} + \l.
\end{equation}

In fact, for a fixed $t \in [0,T]$, $\sqrt {\g_t}$ can be computed as follows. Consider the reversed driving function $\b^t: [0,t] \to \m R$ defined as
$$\b^t(s) := W(t) - W (t-s).$$
The reversed Loewner flow starting from $z \in \m H$ is the solution $[0,t] \to \m H$ to the differential equation: 
\begin{equation}\label{eq:backward_Loewner_flow}
 \partial_s Z^{t}_s(z) = - 2 / Z^{t}_s(z) + \dot \b^t(s) \quad \text{for } s \in [0,t], 
 \end{equation}
with the initial condition $Z^{t}_0(z) = z$ and we have $\lim_{y \downarrow 0} Z^{t}_t(iy) = \sqrt {\g_t}$, see \cite{BasicSLE}.
Since $\dot \b^t (s) \equiv \l$, we have 
 $Z^{t}_{s}( iy ) = Z^{T}_{s}(iy)$ for all $0 \le s \le t \le T$ and $y >0$. In particular, 
$$\sqrt{\g_t} = \lim_{ y \downarrow 0} Z^{t}_t(iy) =  \lim_{ y \downarrow 0} Z^{T}_t(iy).$$
For $t \in (0, T]$, the limit commutes with the differentiation in  \eqref{eq:backward_Loewner_flow}
which then gives \eqref{eq:backward_flow}. (See also \cite{STW2019} for the approach considering the singular differential equation \eqref{eq:backward_Loewner_flow} starting directly from $0$ when $W$ is sufficiently regular.)

From the explicit computation of the Loewner flow driven by a linear function in \cite{kkn2004}, we have the asymptotic expansions as $t \to 0$: 
$$f_t(0^+) = 2\sqrt t + O(t),\quad \sqrt {\g_t} = 2 i \sqrt t + O(t).$$
Hence as $T \to 0$,
\begin{align*}
 &\Big(f_{T-t}(0^+) - f_{T-t}(0^-) \Big)  \Im \left( \partial_t \sqrt{\g_t}\right)\\
 = & \Big(f_{T-t}(0^+) - f_{T-t}(0^-) \Big)  \Im \left( -2/ \sqrt{\g_t}\right)  \\
= &\frac{4 \sqrt {T-t}}{\sqrt t} (1+O(\sqrt T)),
\end{align*}
which yields
\begin{align*}
J(h_T) & = (1+O(\sqrt T))  \frac{\l^2}{\pi}\int_0^T  \sqrt{T-t} / \sqrt t \dd t  \\ 
&
= (1+O(\sqrt T))  \frac{\l^2 T}{\pi} \int_0^1  \sqrt{1-t} / \sqrt t \dd t    \\
&= \frac{\l^2}{2} (T +O(T^{3/2})).  
\end{align*}
By the weak $J$-additivity one gets $J(h_T) = \l^2 T /2 = I(\g [0,T])$. 
% In fact, 
% \[J(h_T) = T n J(h_{1/n}) = T n \frac{\l^2}{2} \left(n^{-1} + O(n^{-3/2})\right) \xrightarrow[n\to \infty]{} \frac{\l^2}{2} T  = I(W),\]
% as claimed.
  \end{proof}

The weak $J$-additivity, the $I$-additivity and Proposition~\ref{prop_linear} do immediately imply the following fact: 
\begin{cor}\label{cor_piecewise_linear} 
Proposition~\ref{prop:eq_chord_1} holds when $\g$ is driven by a piecewise linear function.
\end{cor}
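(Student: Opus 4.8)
The plan is to dismantle the piecewise linear chord into its linear pieces and to reassemble the identity through the two additivity statements. Write $0 = t_0 < t_1 < \dots < t_n = T$ for the breakpoints of $W$, so that $W$ is affine of slope $\lambda_i$ on each $[t_{i-1},t_i]$. First I would check the regularity hypothesis of weak $J$-additivity: since $W$ is piecewise linear, its extension $\bar W$ (with $\bar W \equiv 0$ on $(-\infty,0]$) is globally Lipschitz, hence lies in $C^{0,\beta}$ for every $\beta<1$, so its H\"older exponent exceeds $1/2$. By Theorem~\ref{thm_driving_regularity_eq} (in the form recorded at the start of Section~\ref{sec_weak_J}), this forces $\gamma \cup \m R_+$ to be $C^{1,\alpha}$ for some $\alpha>0$ (any $\alpha<1/2$ works). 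The corners of $W'$ at the $t_i$ do not spoil this, as they only affect $W$ at the level of its derivative and leave $W$ globally Lipschitz; thus Proposition~\ref{prop_add} is applicable.

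Next I would identify each mapped-out piece as a linearly driven chord. For $1\le i\le n$, the curve $\hat\gamma_i := h_{t_{i-1}}(\gamma[t_{i-1},t_i])$ is, by the shift structure of the Loewner flow, driven in $(\S,0,\infty)$ by $s \mapsto W(t_{i-1}+s)-W(t_{i-1}) = \lambda_i s$, i.e. by a linear function. Proposition~\ref{prop_linear} then applies to $\hat\gamma_i$ and yields
$$J(h_{t_i,t_{i-1}}) = I(\hat\gamma_i) = \frac{\lambda_i^2}{2}(t_i-t_{i-1}) < \infty,$$
where $h_{t_i,t_{i-1}} = h_{t_i}\circ h_{t_{i-1}}^{-1}$ is the mapping-out function of $\hat\gamma_i$. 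In particular each increment $J(h_{t_i,t_{i-1}})$ is finite, which is exactly the integrability input demanded by Proposition~\ref{prop_add}.

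I would then telescope. Applying weak $J$-additivity successively with $s=t_{k-1}$, $t=t_k$, and using $h_0=\mathrm{id}$, an induction on $k$ gives $J(h_{t_k}) = \sum_{i=1}^{k} J(h_{t_i,t_{i-1}})$ with every partial sum finite; at $k=n$ this reads $J(h_T) = \sum_{i=1}^{n} J(h_{t_i,t_{i-1}})$. On the other side, iterating the $I$-additivity \eqref{IAdd} over the same partition gives $I(\gamma[0,T]) = \sum_{i=1}^{n} I(\hat\gamma_i)$, equivalently $\frac12\int_0^T W'(r)^2 \dd r = \sum_{i=1}^{n}\frac{\lambda_i^2}{2}(t_i-t_{i-1})$. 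Matching the two sums term by term via the identity $J(h_{t_i,t_{i-1}}) = I(\hat\gamma_i)$ from Proposition~\ref{prop_linear} produces $J(h_T) = I(\gamma[0,T])$, which is the conclusion of Proposition~\ref{prop:eq_chord_1}.

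Since the authors flag the corollary as immediate, there is no deep difficulty here; the one point needing real care --- and hence what I would treat as the main obstacle --- is the bookkeeping of the hypotheses of Proposition~\ref{prop_add} along the induction: one must know a priori that every partial $J(h_{t_k})$ and every increment $J(h_{t_i,t_{i-1}})$ is finite before invoking the additivity, and this finiteness is exactly what the already-proved linear case (Proposition~\ref{prop_linear}) supplies. Everything else is a direct concatenation of the two additivity identities.
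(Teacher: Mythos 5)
Your proof is correct and follows exactly the route the paper intends: the paper states the corollary as an immediate consequence of weak $J$-additivity, $I$-additivity and Proposition~\ref{prop_linear}, and your argument simply spells out that implication (regularity check for Proposition~\ref{prop_add}, identification of each mapped-out piece as a linearly driven chord, and the telescoping with the finiteness bookkeeping). Nothing is missing.
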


\section {Conclusion of the proof of Proposition~\ref{prop:eq_chord_1} by approximation} \label{sec_approx_finite_chord}

We now want to deduce Proposition~\ref{prop:eq_chord_1}, the result for general finite energy chords, from Corollary \ref {cor_piecewise_linear} by approximation. 
We give first the following lemma on the lower semi-continuity of $J(h)$ which is the key tool here: 

\begin{lem}\label{lem_lower_semi_J} If $T < \infty$, $(W^{(n)})_{n \geq 1}$ is a sequence of driving functions defined on $[0,T]$, that converges uniformly to $W$. Then
   \[J(h) \leq \liminf_{n \to \infty} J(h^{(n)}),\]
   where $h^{(n)}: = h^{(n)}_T$ is the Loewner flow generated by $W^{(n)}$ at time $T$, and $h$ is generated  by $W$. 
\end{lem}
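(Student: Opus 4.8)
The plan is to deduce the inequality from the local uniform convergence of the Loewner maps, combined with the fact that the Dirichlet energy is lower semicontinuous under such convergence; the engine is Fatou's lemma applied along an exhaustion of the limiting domain.

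First I would invoke the standard continuity of the Loewner flow with respect to its driving function. Since $W^{(n)} \to W$ uniformly on $[0,T]$, the centered flows $f^{(n)}_t(z)$ solving the Loewner ODE depend continuously on the driving term (by a Gronwall argument, staying uniformly away from the moving singularity for $z$ outside a neighborhood of the limiting hull). Denoting by $K_T$ the hull generated by $W$ at time $T$ (so $K_T = \g[0,T]$ when $W$ has finite energy), this passes through the square relation $h^{(n)}_t(z) = f^{(n)}_t(\sqrt z)^2$ to give that $h^{(n)}_T \to h_T$ locally uniformly on every compact subset of $\S \setminus K_T$.

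Second, since the $h^{(n)}_T$ are conformal and converge locally uniformly to the conformal map $h_T$ on $\S \setminus K_T$, Weierstrass's theorem yields that $(h^{(n)}_T)' \to h_T'$ locally uniformly; as $h_T' \neq 0$, a further application to the holomorphic logarithmic derivatives $(h^{(n)}_T)''/(h^{(n)}_T)'$ shows these converge locally uniformly to $h_T''/h_T'$. Using the identity $\abs{\nabla \s_h} = \abs{h_T''/h_T'}$ from the definition of $J$, the integrands $\abs{\nabla \s_{h^{(n)}}}^2$ then converge locally uniformly to $\abs{\nabla \s_h}^2$ on $\S \setminus K_T$. Now exhaust $\S \setminus K_T$ by an increasing sequence of compacts $(E_j)_{j \geq 1}$. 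By the Carath\'eodory kernel convergence of the complementary domains underlying the convergence above, each $E_j$ is contained in the domain of $h^{(n)}_T$ for all $n$ large enough, so that $\s_{h^{(n)}}$ is defined on $E_j$ for such $n$ and
\[\frac{1}{\pi}\int_{E_j} \abs{\nabla \s_h}^2 \dd z^2 = \lim_{n \to \infty} \frac{1}{\pi} \int_{E_j} \abs{\nabla \s_{h^{(n)}}}^2 \dd z^2 \leq \liminf_{n\to\infty} J(h^{(n)}),\]
where the equality uses the locally uniform convergence on the fixed compact $E_j$ and the inequality comes from enlarging the domain of integration to all of $\S$ minus the $n$-th hull. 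Letting $j \to \infty$ and invoking monotone convergence on the left-hand side gives $J(h) \leq \liminf_{n} J(h^{(n)})$.

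The main obstacle is the control of the approximating hulls near $K_T$: one must ensure both that each fixed compact $E_j$ sits inside the domain of $h^{(n)}_T$ for all large $n$ and that the convergence of the maps is genuinely local uniform there, uniformly in $n$. This is precisely the Carath\'eodory convergence of the complementary domains, which follows from the uniform convergence of the driving functions; the only care needed is to keep the exhausting compacts $E_j$ bounded away from $K_T$ so that the ODE estimates of the first step remain uniform in $n$. Away from the hull no further difficulty arises.
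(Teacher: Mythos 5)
Your argument is correct and rests on the same core mechanism as the paper's proof: uniform convergence of the drivers upgrades to locally uniform convergence of the conformal maps, hence of the integrands $\abs{h''/h'}^2$, after which one exchanges a $\liminf$ with a supremum over an exhaustion by compacts (your Fatou step is exactly the paper's inequality $\liminf_n \sup_K \geq \sup_K \liminf_n$). The execution differs in one respect worth noting. You work on the image side, with the forward maps $h^{(n)}_T$ defined on the varying domains $\S\setminus K^{(n)}_T$; this forces you to control the approximating hulls, i.e.\ to check that each fixed compact $E_j\subset\S\setminus K_T$ is eventually disjoint from $K^{(n)}_T$ and that the convergence is uniform there. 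You correctly flag this as the main obstacle, and it is indeed handled by the Gronwall/Carath\'eodory argument you sketch. The paper sidesteps the issue entirely by passing to the inverse maps $\varphi^{(n)}=(h^{(n)})^{-1}$, which are all defined on the \emph{fixed} domain $\S$, so that the locally uniform convergence $\varphi^{(n)}\to\varphi$ and of the logarithmic derivatives is immediate and the exhaustion argument needs no domain bookkeeping; the identity \eqref{eq_inverse_J}, namely $J(h)=J(h^{-1})$, then transfers the conclusion back to $J(h)$. Your route is sound but pays a price for the moving domains; the paper's inversion trick is the device that reduces the proof to a few lines.
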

\begin{proof}
   Let $\varphi = h^{-1}$ and $\varphi^{(n)} = (h^{(n)})^{-1}$. Since $W^{(n)}$ converges uniformly to $W$, $\varphi^{(n)}$ converges uniformly on compacts to $\varphi$. 
   We have also that 
   \begin{align*}
   \abs{\nabla \s_{\varphi^{(n)}} (z)}^2 =  \abs{\frac{\varphi^{(n)}(z)''}{\varphi^{(n)}(z)'}}^2 
   \end{align*}
   converges uniformly on compacts to $\abs{\nabla \s_{\varphi}(z)}^2$.
   Hence 
   \begin{align*}
 \liminf_{n\to \infty} J(\varphi^{(n)})  & =  \liminf_{n \to \infty} \sup_{K \subset \S} \frac{1}{\pi}\int_K \abs{\nabla \s_{\varphi^{(n)}}(z)}^2 \dd z^2 \\
   & \geq \sup_{K \subset \S} \liminf_{n \to \infty} \frac{1}{\pi}\int_K \abs{\nabla \s_{\varphi^{(n)}}(z)}^2 \dd z^2 \\
   &= \sup_{K \subset \S} \frac{1}{\pi}  \int_K \abs{\nabla \s_{\varphi}(z)}^2 \dd z^2 =  J(\varphi),
   \end{align*}
   where the supremum is taken over all compacts in $\S$. Then we conclude with \eqref{eq_inverse_J}.
  \end{proof}

We have the following corollary which gives the finiteness of $J$-energy when the Loewner energy is finite.
\begin{cor} \label{cor_IJ_comparison}
If $\g$ driven by $W: [0,T] \to \m R$ has finite Loewner energy in $(\S, 0, \infty)$, then $J(h) \leq I(\g)$. In particular, $\s_h \in \mc D^{\infty} (\S\setminus \g)$.
\end{cor}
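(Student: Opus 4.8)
The plan is to reduce the general finite-energy case to the piecewise linear case already settled in Corollary~\ref{cor_piecewise_linear}, using the lower semi-continuity established in Lemma~\ref{lem_lower_semi_J}.

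First I would approximate $W$ by a sequence of piecewise linear driving functions. Fix partitions $0 = t_0^{(n)} < \cdots < t_{k_n}^{(n)} = T$ with mesh tending to $0$, and let $W^{(n)}$ be the continuous function that is affine on each subinterval and agrees with $W$ at the partition points. Because $I(\g) < \infty$ forces $W$ to be absolutely continuous, hence uniformly continuous on $[0,T]$, a modulus-of-continuity estimate gives $\norm{W^{(n)} - W}_{\infty} \to 0$; thus $W^{(n)} \to W$ uniformly, which is exactly the hypothesis of Lemma~\ref{lem_lower_semi_J}.

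The one quantitative input I need is that the interpolation does not increase the energy. On the $k$-th subinterval the slope of $W^{(n)}$ is the average of $W'$, so by the Cauchy--Schwarz inequality
\[
I(W^{(n)}) = \frac{1}{2} \sum_k \frac{1}{t_k - t_{k-1}} \Big( \int_{t_{k-1}}^{t_k} W'(s) \dd s \Big)^2 \leq \frac{1}{2} \int_0^T W'(s)^2 \dd s = I(\g).
\]
Since each $W^{(n)}$ is piecewise linear, Corollary~\ref{cor_piecewise_linear} yields $J(h^{(n)}) = I(W^{(n)})$, and then Lemma~\ref{lem_lower_semi_J} gives
\[
J(h) \leq \liminf_{n \to \infty} J(h^{(n)}) = \liminf_{n \to \infty} I(W^{(n)}) \leq I(\g),
\]
which is the desired inequality.

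For the final assertion I would simply observe that $\s_h = \log \abs{h'}$ is harmonic, and in particular $C^{\infty}$, on $\S \setminus \g$ since $h$ is conformal there; as $J(h) = \frac{1}{\pi} \int_{\S \setminus \g} \abs{\nabla \s_h}^2 \dd z^2 \leq I(\g) < \infty$, the function $\s_h$ has finite Dirichlet energy and hence belongs to $\mc D^{\infty}(\S \setminus \g)$. The proof is short because the substantive work was done in the linear case and in the semi-continuity lemma; the only delicate point is arranging a single approximating sequence that converges uniformly \emph{and} satisfies $I(W^{(n)}) \leq I(\g)$, which the piecewise linear interpolation achieves through the Cauchy--Schwarz step above.
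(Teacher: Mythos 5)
Your proof is correct and follows essentially the same route as the paper: approximate $W$ uniformly by piecewise linear driving functions with controlled energy, then combine Corollary~\ref{cor_piecewise_linear} with the lower semi-continuity of Lemma~\ref{lem_lower_semi_J}. The only (immaterial) difference is the choice of approximating sequence --- you interpolate $W$ at partition points and get $I(W^{(n)})\leq I(W)$ by Cauchy--Schwarz, whereas the paper approximates $W'$ by step functions in $L^2([0,T])$ and gets $I(W^{(n)})\to I(W)$; both yield the same conclusion.
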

\begin{proof}
    Take a sequence of piecewise linear functions $W^{(n)}$ such that $W^{(n)}$ converges to $W$ uniformly and 
    $$ I(W^{(n)} - W) = \frac{1}{2}\int_0^T \abs{W'^{(n)}(t)-W' (t) }^2 \dd t \xrightarrow[]{n \to \infty} 0.$$
   This is possible since the family of step functions is dense in $L^2 ([0,T])$. Thus we can find a sequence of step functions $Y_n$ which converges to $W'$ in $L^2$, and define $W^{(n)}(t) = \int_0^t Y_n(t) \dd t$. 
   The convergence is also uniform since
   \[\abs{W^{(n)}(t) - W(t)} \leq \int_0^t \abs{W'^{(n)}(s) - W'(s)} \dd s \leq  \sqrt T \sqrt{ 2 I(W^{(n)} - W)}\]
   by Cauchy-Schwarz inequality. Lemma~\ref{lem_lower_semi_J}  and Corollary~\ref{cor_piecewise_linear} imply that 
   \begin{align*}
  I(\g) =  I(W) = \lim_{n\to \infty} I(W^{(n)}) = \lim_{n\to \infty} J(h^{(n)})  \geq  J(h)
   \end{align*}
   as desired.
  \end{proof}

Given the finiteness of the $J$-energy, one can improve the $J$-additivity by dropping the regularity condition on $\g$. 
The following lemma is a stronger version of Proposition~\ref{prop_zero} by assuming only that $\g$ has finite Loewner energy. 
\begin{lem} \label{lem_strong_zero} 
  If $\g$  is a Loewner chain in $(\S, 0 , \infty)$ with finite Loewner energy and finite total capacity. Then for all $g \in \mc D^{\infty} (\S)$,
    \begin{equation}  \label{eq_zero_strong}
   \int_{ \S \setminus \g}\nabla g(z) \cdot \nabla \s_h (z) \dd z^2 = 0.
   \end{equation}
    
\end{lem}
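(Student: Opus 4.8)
The plan is to transport the whole identity to the fixed domain $\S$ by the conformal change of variables underlying \eqref{eq_inverse_J}, and then obtain the vanishing by approximating $\g$ with curves driven by piecewise linear functions, for which Proposition~\ref{prop_zero} already applies. Writing $\varphi = h^{-1}\colon \S \to \S \setminus \g$ and using the relation $\sigma_h \circ \varphi = -\sigma_\varphi$ together with the polarized form of the conformal invariance recorded in \eqref{eq_inverse_J}, the desired identity \eqref{eq_zero_strong} is equivalent to
\[\int_{\S} \nabla (g\circ \varphi)(w) \cdot \nabla \sigma_{\varphi}(w) \, \dd w^2 = 0.\]
The point of this reformulation is that the domain of integration no longer depends on $\g$, which is what makes the approximation below feasible. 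Here $g\circ\varphi \in \mc D^{\infty}(\S)$, since its Dirichlet energy equals $\int_{\S\setminus\g}\abs{\nabla g}^2$ and hence is at most that of $g$, while $\sigma_{\varphi}\in \mc D^{\infty}(\S)$ because $\g$ has finite Loewner energy, by Corollary~\ref{cor_IJ_comparison} and \eqref{eq_inverse_J}.

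Next I would set up the approximation exactly as in the proof of Corollary~\ref{cor_IJ_comparison}: choose piecewise linear driving functions $W^{(n)}$ on $[0,T]$ with $W^{(n)}\to W$ both uniformly and in Dirichlet energy. Each approximating curve $\g^{(n)}$ is driven by a Lipschitz function, so $\g^{(n)}\cup\m R_+$ is $C^{1,\a}$ for some $\a>0$ by Theorem~\ref{thm_driving_regularity_eq}, and $\sigma_{h^{(n)}}\in \mc D^{\infty}(\S\setminus\g^{(n)})$ again by Corollary~\ref{cor_IJ_comparison}. Hence Proposition~\ref{prop_zero} applies to $\g^{(n)}$, and after the same change of variables with $\varphi^{(n)} = (h^{(n)})^{-1}$ it gives, for every $n$,
\[\int_{\S} \nabla (g\circ \varphi^{(n)}) \cdot \nabla \sigma_{\varphi^{(n)}} \, \dd w^2 = 0.\]

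It then remains to pass to the limit in this last integral. Since $W^{(n)}\to W$ uniformly, $\varphi^{(n)}\to\varphi$ uniformly on compacts of $\S$ (as in Lemma~\ref{lem_lower_semi_J}); differentiating these analytic maps, $\nabla(g\circ\varphi^{(n)})\to\nabla(g\circ\varphi)$ and $\nabla\sigma_{\varphi^{(n)}}\to\nabla\sigma_{\varphi}$ uniformly on compacts. Moreover both families are bounded in $L^2(\S)$: one has $\norm{\nabla\sigma_{\varphi^{(n)}}}_{L^2(\S)}^2 = \pi J(h^{(n)})$, which stays bounded because $J(h^{(n)})\to J(h)<\infty$, and $\norm{\nabla(g\circ\varphi^{(n)})}_{L^2(\S)}^2 = \int_{\S\setminus\g^{(n)}}\abs{\nabla g}^2 \le \norm{\nabla g}_{L^2(\S)}^2$. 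Splitting the integral over a large ball $K=B(0,R)$ and its complement, the part over $K$ converges by uniform convergence of the integrand on a finite-measure set, while the part over $\S\setminus K$ is estimated by Cauchy--Schwarz.

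The main obstacle is precisely this uniform tail control on the non-compact domain $\S$, since the image $\varphi^{(n)}(\S\setminus K)$ moves with $n$. To handle it I would use that the hulls $\g^{(n)}$ lie in a common bounded set, their capacities being at most $T$ and the $W^{(n)}$ uniformly bounded, so that $\varphi^{(n)}(w)=w+O(1)$ uniformly in $n$; consequently $\varphi^{(n)}(\S\setminus B(0,R)) \subset \S\setminus B(0,R')$ with $R'$ independent of $n$ and $R'\to\infty$ as $R\to\infty$. Then $\norm{\nabla(g\circ\varphi^{(n)})}_{L^2(\S\setminus B(0,R))}^2 \le \int_{\S\setminus B(0,R')}\abs{\nabla g}^2$, which tends to $0$ uniformly in $n$ because $g\in \mc D^{\infty}(\S)$; combined with the uniform $L^2$ bound on $\nabla\sigma_{\varphi^{(n)}}$ this makes the tail uniformly small. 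Passing to the limit then yields $\int_{\S}\nabla(g\circ\varphi)\cdot\nabla\sigma_{\varphi}\,\dd w^2 = 0$, and undoing the change of variables gives \eqref{eq_zero_strong}.
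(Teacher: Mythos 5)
Your overall strategy is the same as the paper's: approximate $W$ by piecewise linear driving functions, invoke Proposition~\ref{prop_zero} for the regular curves $\g^{(n)}$, and pass to the limit using Cauchy--Schwarz to control a tail and locally uniform convergence on the rest. The difference is that you first transport everything to the fixed domain $\S$ via $\varphi = h^{-1}$, and this is where a genuine gap appears. After the change of variables your ``good'' region is $B(0,R)\cap\S$, which is \emph{not} compactly contained in $\S$: its closure meets the boundary ray $\m R_+$. The locally uniform convergence $\varphi^{(n)}\to\varphi$, $(\varphi^{(n)})'\to\varphi'$ that you correctly derive holds on compact subsets of $\S$ only; it does not give uniform convergence of $\nabla\s_{\varphi^{(n)}}$ or $\nabla(g\circ\varphi^{(n)})$ up to $\m R_+$ (indeed $\varphi'$ need not even extend continuously to $\m R_+$, since a finite-energy $\g$ need not be $C^1$). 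So the claim that ``the part over $K$ converges by uniform convergence of the integrand on a finite-measure set'' is unjustified. Your tail estimate only handles $\S\setminus B(0,R)$; the contribution of a thin collar of $\m R_+$ inside $B(0,R)$ is left uncontrolled, and controlling it would require showing that $\int_{\varphi^{(n)}(V_\delta)}\abs{\nabla g}^2$ is small uniformly in $n$ for thin collars $V_\delta$, i.e.\ some uniform integrability of $\abs{(\varphi^{(n)})'}^2$ near the boundary, which you do not have.

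The paper's proof runs the same limit without changing variables, and this is exactly what makes the tail control trivial: one chooses a compact $K\subset\S\setminus\g$ with $\int_{\S\setminus K}\abs{\nabla g}^2\le\vare$ for the \emph{fixed} test function $g$ in the \emph{fixed} $z$-variable; then for every $n$ the tail region $(\S\setminus\g^{(n)})\setminus K$ is contained in $\S\setminus K$, so the Cauchy--Schwarz bound $\sqrt{\pi A\vare}$ holds uniformly in $n$ with no further argument, and on $K$ one has $\g^{(n)}\cap K=\emptyset$ eventually and Carath\'eodory convergence of $h^{(n)}$ with derivatives. Your version can be repaired in the same spirit --- e.g.\ fix a compact $L\subset\S\setminus\g$ capturing all but $\vare$ of $\int\abs{\nabla g}^2$, note that $h^{(n)}(L)$ is eventually contained in a fixed compact $K\subset\S$, so that $\varphi^{(n)}(\S\setminus K)\subset\S\setminus L$ --- but as written the limiting step does not go through.
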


\begin{proof}
 Take the same approximation of the driving function $W$ of $\g$ by a family of piecewise linear driving functions  $W^{(n)}$ as in the proof of Corollary~\ref{cor_IJ_comparison}. 
 Let $\g^{(n)}$ be the curve driven by $W^{(n)}$. Let $A = \sup_{n \geq 1} I(\g^{(n)}) \geq I(\g)$. We may assume that $A < \infty$.
Corollary~\ref{cor_IJ_comparison} implies that $J(h) \leq A$.
Moreover, from Corollary~2.2 in \cite{wang_loewner_energy}, every subsequence of $\g^{(n)}$ has a subsequence that converges uniformly to $\g$ as capacity-parametrized curves, thanks to the fact that they are $k$-quasiconformal curves with $k$ depending only on $A$. 
Hence, $\g^{(n)}$ converges uniformly to $\g$. 

Since $\g^{(n)}$ are all $C^{1,\a}$ for $\a < 1/2$, let $h^{(n)}$ be the mapping-out function of $\g^{(n)}$, one has 
  \[\int_{\S\setminus \g^{(n)}}\nabla g(z) \cdot \nabla \s_{h^{(n)}} (z) \dd z^2 = 0,\]
by Proposition~\ref{prop_zero}.
Since $ g$ and $\s_h$ are in $\mc D^{\infty}(\S \setminus \g)$,
  for every $\vare > 0$, there exists a compact set $K \subset \S \setminus \g$, such that 
   \[ \int_{(\S\setminus \g) \setminus K} \abs {\nabla g(z)}^2\dd z^2 \le \vare,\]
   which implies 
   \[\int_{(\S\setminus \g) \setminus K}\nabla g(z) \cdot \nabla \s_{h} (z) \dd z^2 \leq \sqrt { \pi A \vare}\]
   by Cauchy-Schwarz inequality. It also holds for $\s_{h^{(n)}}$.
As $\g^{(n)}$ converges uniformly to $\g$,  $\g^{(n)} \cap K = \emptyset$ for $n$ large enough, and $h^{(n)}$ converges uniformly to $h$ on $K$ (Carath\'eodory convergence \cite{duren1983} Thm.~3.1), we have 
\[\abs{\nabla \s_{h} (z) - \nabla \s_{h^{(n)}} (z)} = \abs{\frac{h''}{h'} (z) - \frac{(h^{(n)})''}{(h^{(n)})'}  (z)} \xrightarrow[ n\to \infty]{\text{unif. on } K} 0.\]
Hence,
\begin{align*} 
 & \abs{\int_{\S\setminus \g}\nabla g(z) \cdot \nabla \s_{h} (z) \dd z^2 } \\
 = &\abs{\int_{\S \setminus \g}\nabla g(z) \cdot \nabla \s_{h} (z) \dd z^2  - \int_{\S \setminus \g^{(n)}}\nabla g(z) \cdot \nabla \s_{h^{(n)}} (z) \dd z^2 } \\
 \leq &  \abs{\int_{K}\nabla g(z) \cdot \nabla \s_{h} (z) \dd z^2 - \int_{K}\nabla g(z) \cdot \nabla \s_{h^{(n)}} (z) \dd z^2 } +2\sqrt{\pi A\vare} \\
  & \xrightarrow[ n\to \infty]{} 2\sqrt {\pi A \vare}.
\end{align*}
Letting $\vare \to 0$, we get \eqref{eq_zero_strong}.
  \end{proof}

We use the same notation as in Proposition~\ref{prop_add} and
deduce the following strong $J$-additivity:

\begin{cor}[Strong $J$-additivity] \label{cor_strong_J_add}  If $\g$ has finite Loewner energy, then $J(h_t) = J(h_s) + J(h_{t,s})$ for $0 \leq s \leq t \leq T$.
\end{cor}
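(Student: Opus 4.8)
The plan is to derive the strong $J$-additivity by mimicking the proof of the weak version (Proposition~\ref{prop_add}), but replacing the appeal to Proposition~\ref{prop_zero} with its stronger counterpart Lemma~\ref{lem_strong_zero}, which has already dropped the $C^{1,\a}$ regularity hypothesis in favor of mere finiteness of the Loewner energy. First I would fix $0 \le s \le t \le T$ and set $\hat\g := h_s(\g[s,t])$, writing $\s_r = \log\abs{h_r'}$ and $\sigma_{t,s} = \log\abs{h_{t,s}'}$ exactly as in Proposition~\ref{prop_add}. Using the cocycle relation $h_t = h_{t,s} \circ h_s$, one obtains $\s_t(z) = \sigma_{t,s}(h_s(z)) + \s_s(z)$, and expanding the square gives
\[
\pi J(h_t) = \pi J(h_s) + \int_{\S \setminus \g}\abs{\nabla \sigma_{t,s}(h_s(z))}^2 \dd z^2 + 2\int_{\S \setminus \g} \nabla \s_s(z) \cdot \nabla \sigma_{t,s}(h_s(z)) \dd z^2.
\]
The middle term equals $\pi J(h_{t,s})$ by the conformal invariance of the Dirichlet energy, precisely as before, so the whole statement reduces to showing the cross term vanishes.

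The key observation is that the cross term, after the change of variables $y = h_s(z)$ and using $\nabla\s_s(z) = -\nabla\s_{h_s^{-1}}(h_s(z))$, becomes
\[
\int_{\S \setminus \hat\g} -\nabla \s_{h_s^{-1}}(y) \cdot \nabla \sigma_{t,s}(y) \dd y^2.
\]
To invoke Lemma~\ref{lem_strong_zero} I would take $g := \s_{h_s^{-1}}$ and read the integrand as $\nabla g \cdot \nabla \sigma_{t,s}$ over the slit domain $\S \setminus \hat\g$. What needs checking is that the two ingredients of Lemma~\ref{lem_strong_zero} hold: that $\s_{h_s^{-1}} \in \mc D^{\infty}(\S)$ and that $\hat\g$ is a finite-energy, finite-capacity Loewner chain in $(\S, 0, \infty)$. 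The first follows from the finiteness of $J(h_s)$, which by the inverse-map identity \eqref{eq_inverse_J} is exactly the Dirichlet energy of $\s_{h_s^{-1}}$; since $\g$ has finite Loewner energy, Corollary~\ref{cor_IJ_comparison} guarantees $J(h_s) \le I(\g[0,s]) < \infty$. The second follows because $\hat\g = h_s(\g[s,t])$ is the image of a finite-energy chord under the mapping-out function, hence is itself driven by a segment of the (time-shifted) driving function and carries finite Loewner energy $I(\g[s,t])$ by the $I$-additivity \eqref{IAdd}.

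The main obstacle, and the only genuine subtlety, is verifying that the hypotheses of Lemma~\ref{lem_strong_zero} are met without any smoothness assumption on $\g$: in the weak version one leaned on $\hat\g \cup \m R_+$ being $C^{1,\b}$ to apply Proposition~\ref{prop_zero}, whereas here I must instead certify that $\hat\g$ is a bona fide finite-energy Loewner chain so that Lemma~\ref{lem_strong_zero} applies directly to it. This amounts to confirming that conjugating by $h_s$ preserves the $(\S, 0, \infty)$ normalization and that the resulting driving function $t' \mapsto W(s + t') - W(s)$ (appropriately reparametrized by capacity) has finite Dirichlet energy, both of which are immediate from the definition of the driving function and $I$-additivity. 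Once these two conditions are in place, Lemma~\ref{lem_strong_zero} yields that the cross term is zero, and collecting the three terms gives $J(h_t) = J(h_s) + J(h_{t,s})$, completing the proof.
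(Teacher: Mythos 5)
Your proposal is correct and follows essentially the same route as the paper: the paper's proof likewise notes that $J(h_s)$ and $J(h_{t,s})$ are finite by Corollary~\ref{cor_IJ_comparison} and then repeats the weak $J$-additivity argument verbatim, replacing Proposition~\ref{prop_zero} by Lemma~\ref{lem_strong_zero} applied with $g = \s_{h_s^{-1}}$. Your verification of the hypotheses of Lemma~\ref{lem_strong_zero} (finiteness of the Dirichlet energy of $\s_{h_s^{-1}}$ via \eqref{eq_inverse_J}, and $\hat\g$ being a finite-energy, finite-capacity chain by $I$-additivity) is exactly the content the paper leaves implicit.
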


\begin{proof} Since $J(h_s) \leq \int_0^s W'(r)^2 /2 \dd r$ and $J(h_{t,s}) \leq \int_s^t W'(r)^2 /2 \dd r$ from Corollary~\ref{cor_IJ_comparison}, they are automatically finite when $I(\g) $ is finite. 
The proof then follows exactly the same line of thought as the weak $J$-additivity by applying Lemma~\ref{lem_strong_zero} with $g= \s_{h_s^{-1}}$.
   \end{proof}

Now we have all the ingredients for proving Proposition~\ref{prop:eq_chord_1}.

\begin{proof}
   
  Given Corollary~\ref{cor_IJ_comparison},  we only need to prove $J(h) \geq I(\g)$.
   
   Consider the following two functions 
   $$a(t) := J(h_t) \text{ and } b(t) := \frac{1}{2} \int_0^t W'(s) ^2 \dd s  = I(\g[0,t]).$$
   Both of them satisfy the respective additivity. 
    From the definition of absolute continuity, $b(\cdot)$ is AC on $[0,T]$. 
    By the additivity, Corollary~\ref{cor_IJ_comparison} and \ref{AC_interval},  $a(\cdot)$ is also an AC function. 
    Thus \ref{AC_derivative} implies that on a full Lebesgue measure set $S$, the functions $a(\cdot)$, $b(\cdot)$ and $W(\cdot)$ are differentiable and $b'(t) = W'(t)^2 /2$. Corollary~\ref{cor_IJ_comparison} shows in particular $a'(t) \leq b'(t)$. 
    Now it suffices to show that $b'(t) \leq a'(t)$ for $t \in S$.
    
    By additivity, without loss of generality, we assume that $t = 0$ and $T =1$. Consider $W^{(n)}$ obtained by concatenating $n$ copies of $W[0, 1/n]$, that is
    \[W^{(n)}(t) = \intpart{tn} W(1/n) + W(t - \intpart{tn}/n), \quad \forall t\in [0,1]. \]
    It is easy to see that $ I(W^{(n)})$ converges to $ I(W^{\infty})$, where $W^{\infty}$ is the linear function $t \mapsto t W'(0) $, since
    \begin{align*} 
 I(W^{(n)}) = n b(1/n) \xrightarrow[n \to \infty]{} b'(0) = W'(0)^2 /2 = I(W^{\infty}).
    \end{align*}
     We have also that $W^{(n)}$ converges uniformly to $W^{\infty}$. In fact, since $W$ is differentiable at $0$, for every $\vare >0$, there exists $n_0$, such that for all $n \geq n_0$, for all $t \leq 1/n$,
    \[\abs{W(t) - W'(0) t} \leq \vare/n.\]
    Hence for $t \in [0,1]$,
    \begin{align*}
    \abs{W^{(n)}(t)  - t W'(0)} &\leq \abs{W^{(n)}(\intpart{tn}/n)  - W'(0) \intpart{tn}/n }  + \abs{W(\d) - \d W'(0)} \\
    & = \intpart{tn} \abs{W^{(n)}(1/n)  - (1/n) W'(0)} + \abs{W(\d) - \d W'(0)} \\
    &\leq \vare (tn+1)/n \leq 2\vare,
    \end{align*}
where $\d = t - \intpart{tn}/n$.

The uniform convergence of driving function and Lemma~\ref{lem_lower_semi_J} imply that 
\[J(h^{\infty}) \leq \liminf_{n\to \infty} J(h^{(n)}) = \liminf_{n\to \infty} n a(1/n) = a'(0),\]
 where $h^{\infty}$ is the mapping-out function generated by $W^{\infty}$, $h^{(n)}$ is generated by $W^{(n)}$. The first equality follows from the $J$-additivity. 
From Proposition~\ref{prop_linear}, 
  \[J(h^{\infty}) = I(W^{\infty})= \abs{W'(0)}^2 /2 = b'(0) \]
  which yields $b'(0) \leq a'(0)$ and concludes the proof.
  \end{proof}

   \section{The Loop Loewner Energy} \label{sec_loop}
    
The generalization of the chordal Loewner energy to loops is first studied in \cite{LoopEnergy} and the goal in this section is to derive the loop energy identity Theorem~\ref{thm_loop_identity}.
Let $\g$ be a Jordan curve on the Riemann sphere $\Chat =  \m C \cup \{\infty\}$, that is parametrized by a continuous $1$-periodic function that is injective on $[0,1)$. The \emph{Loewner loop energy} of $\g$ rooted at $\g(0)$  is given by
    \[ I^L(\g, \g(0)) := \lim_{\vare \to 0} I_{\Chat \setminus \g[0, \vare], \g(\vare), \g(0)} (\g[\vare, 1]).\]
    We will use the abbreviation $I_{\g[0,\vare]}$ in the sequel for $I_{\Chat \setminus \g[0, \vare], \g(\vare), \g(0)}$.
    From the definition, the loop energy is conformally invariant (\ie invariant under M\"obius transformations): if $\mu : \Chat \to \Chat$ is a M\"obius function, then 
    \[I^L(\g, \g(0))  = I^L(\mu(\g), \mu(\g)(0)).\]
    Moreover, the loop energy vanishes only on circles (\cite[Sect.2.2]{LoopEnergy}).

The loop energy can be expressed in terms of the driving function as well: we first define the driving function of an embedded arc in $\Chat$ rooted at one tip of the arc. 
\emph{An embedded arc} is the image of an injective continuous function $\g :[0,1] \to \Chat$. 
We parametrize the arc by the capacity seen from $\g(0)$ as follows (and the capacity reparametrized arc is denoted as $t \mapsto \Gamma (t)$):
\begin{itemize}
  \item Choose first a point $\g(s_0)$ on $\g$, for some $s_0 \in (0,1]$. Define $\G (0)$ to be $\g(s_0)$. 
  \item Choose a uniformizing conformal mapping $\psi_{s_0}$ from the complement of $\g[0,s_0]$ onto $\m H$, such that $\psi_{s_0}(\g(s_0)) = 0$ and $\psi_{s_0}(\g(0)) = \infty$. 
  \item For $s \in (0,1]$, define the conformal mapping $\psi_s$ from the complement of $\g[0,s]$ onto $\m H$ to be the unique mapping such that the tip $\g(s)$ is sent to $0$, $\g(0)$ to $\infty$, and   $\psi_s \circ \psi_{s_0}^{-1} (z) = z + O(1)$
   as $z\to \infty$. 
   \item Set $\g(s) = \G(t)$ if the expansion of $\psi_s \circ \psi_{s_0}^{-1}$ at $\infty$ is actually
   \[\psi_s \circ \psi_{s_0}^{-1} (z) = z - W(t) + 2t/z + o(1/z),\]
   for some $W(t) \in \m R$ and $2t$ is called the \emph{capacity} of $\g[0,s]$ seen from $\g(0)$, relatively to $\g(s_0)$ and $\psi_{s_0}$. The capacity parametrization $s \mapsto t$ is increasing and has image $(-\infty,T]$ for some $T \in \m R_+$. We set $\G(-\infty) = \g(0)$. 
   \item We define $h_t := \psi_s^2$ to be the \emph{mapping-out function} of $\g[0,s]$, which maps the complement of $\g[0,s]$ to the complement of $\m R_+$ such that $h_t(\g(0)) = \infty$ and $h_t(\g(s)) = 0$. 
   \item The continuous function $W$ defined on $(-\infty, T]$ is called the \emph{driving function of the arc} $\g$.
   \item The \emph {Loewner arc energy} of $\g$ is the Dirichlet energy of $W$ which is 
   \[I^{A} (\g, \g(0)) = \int_{-\infty}^T W'(t)^2 /2 \dd t = \lim_{\vare \to 0} I_{\g[0,\vare]} (\g[\vare, 1]).\]  
   \end{itemize}

   Note that the capacity $t$, $h_t$ and $W(t)$ depend on the choice of $s_0$ and $\psi_{s_0}$. 
   A different choice  of $s_0$ and $\psi_{s_0}$ changes the driving function to 
   \begin{equation} \label{eq_equiv_driv}  \tilde W(t) = W (\l^2 (t+a))/ \l - W (\l^2 a)/ \l, \end{equation}
   for some $\l >0$ and $a\in \m R$. However, the Dirichlet energy of $W$ is invariant under such transformations. 
   
   \begin{figure}[ht]
 \centering
 \includegraphics[width=0.95\textwidth]{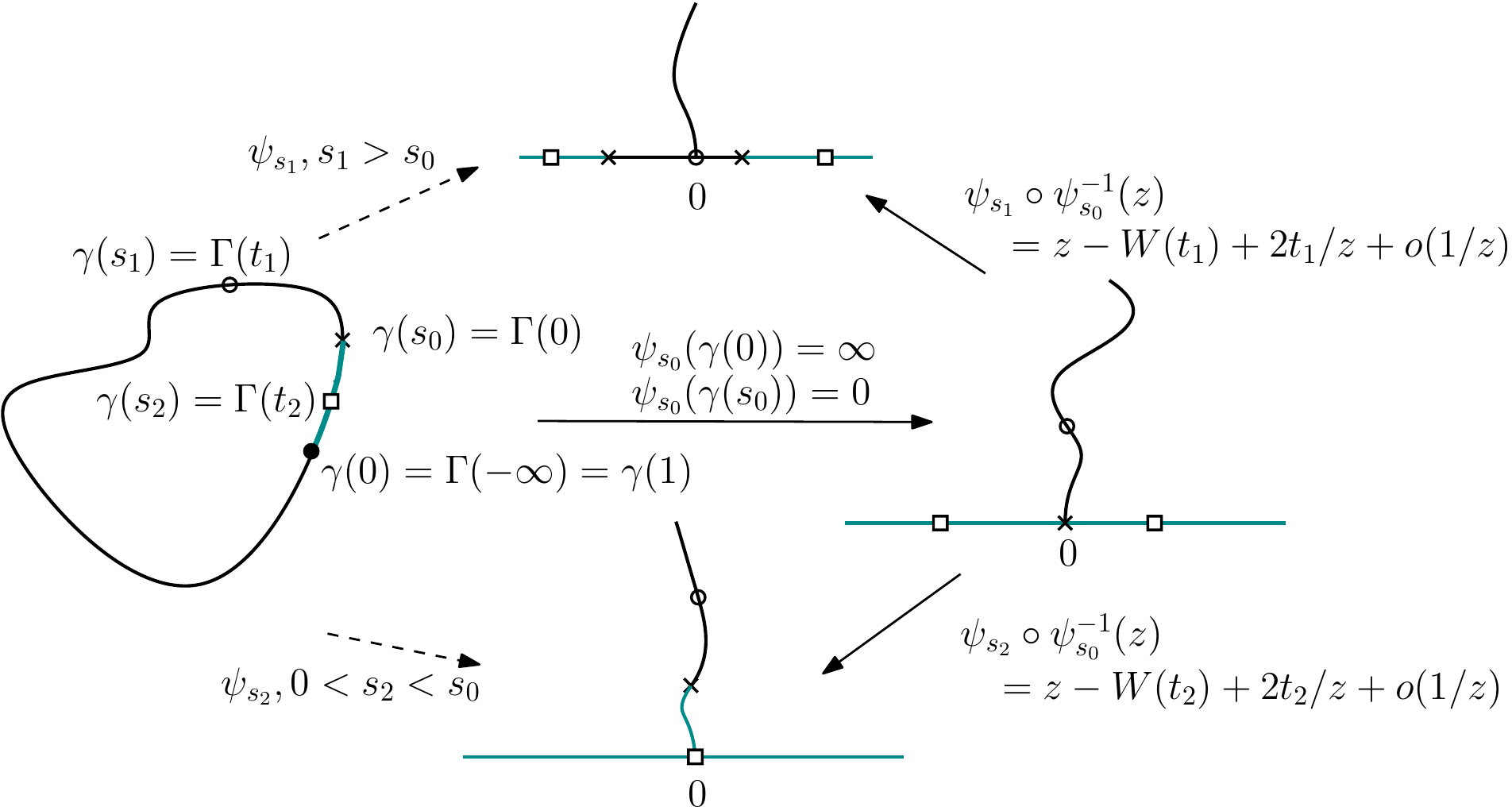}
 \caption{\label{fig_loop_driving} Illustration of the definition of loop driving function $W : \m R \to \m R$ and the capacity reparameterization $t\mapsto \G(t)$ of $\g$, where $0< s_2<s_0<s_1$ correspond to the capacities $-\infty < t_2 < 0 < t_1$.} 
 \end{figure}
   
    From the definition, as $T \to \infty$, the arc targets at its root to form a loop. This allows us to define the driving function of a simple loop $\g$ embedded in $\Chat$: 
    We parametrize and define the arc driving function of $\g[0,1-\vare]$ seen from $\g(0)$ for every $0<\vare < 1/2$. 
   With the same choice of $s_0$ and $\psi_{s_0}$, the capacity parametrization and the driving function of $\g[0,1-\vare]$ are consistent with respect to restrictions for all $\vare>0$. 
   Hence as $\vare \to 0$, $T \to \infty$ and we obtain the driving function $W :\m R \to \m R$ of the loop rooted at $\g(0)$.  
   Given the root $\g(0)$ and the orientation of the parametrization, the driving function is defined modulo transformations in \eqref{eq_equiv_driv}. 
   The loop energy is therefore the Dirichlet energy of the driving function $W$ which is invariant under those transformations.

It is clear that the loop energy depends \emph{a priori} on the root $\g(0)$ and the orientation of the parametrization, since the change of root/orientation induces non-trivial changes on the driving function that is in general hard to track. 
However, the main result of \cite{LoopEnergy} shows that the Loewner loop energy of $\g$ only depends on the image of $\g$. 
In this section we prove the identity (Theorem~\ref{thm_loop_identity}) that will give other approaches to the parametrization independence of the loop energy in Section~\ref{sec_detz} and \ref{sec_WP}.
Although we do not presume the root-invariance of the loop energy, we sometimes omit the root in the expression of Loewner loop energy. In this case, the root is taken to be $\g(0)$.

    From the conformal invariance of the Loewner energy, we may assume that $\g$ is a simple loop on $ \Chat$ such that $\g(0) = \infty$ and passes through $0$ and $1$. The complement of $\g$ has two unbounded connected components $H_1$ and $H_2$.
\begin{thm} \label{thm_loop_identity} If $\g$ has finite Loewner energy, then 
\[I^L(\g,\infty) = \frac{1}{\pi} \left ( \int_{\m C \setminus \g} \abs {\nabla{\s_{h}(z)}}^2 \dd z^2 \right),\]
  where $h|_{H_1}$ (resp. $h|_{H_2}$) maps $H_1$ (resp. $H_2$) conformally onto a half-plane and fixes $\infty$.
\end{thm}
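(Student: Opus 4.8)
The plan is to obtain Theorem~\ref{thm_loop_identity} as a limit of the chordal statement Proposition~\ref{prop:eq_chord_1}, which has already been established for finite-capacity chords, by letting the capacity $T \to \infty$. Recall from the discussion surrounding Figure~\ref{fig_extension} that the loop energy of $\g$ rooted at $\infty$ is, by definition, $I^L(\g,\infty) = \lim_{\vare\to 0} I_{\g[0,\vare]}(\g[\vare,1])$, and that for each truncation the associated finite-capacity chord has a $J$-energy equal to its Loewner energy by Proposition~\ref{prop:eq_chord_1}. So the first step is to fix the marked-point normalization $\g(0)=\infty$, $0,1\in\g$, and to exhibit the loop as the increasing limit (in capacity) of the arcs $\g[0,1-\vare]$ whose mapping-out functions $h_t$ are precisely the objects controlled in the chordal setting. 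Writing $\S=\m C\setminus\m R_+$ after a square-root conformal change, the arc driving function $W$ on $(-\infty,T]$ has, as $T\to\infty$, its Dirichlet energy converging to $I^L(\g,\infty)$ by the very definition of the loop energy.

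The heart of the argument is then a monotone-convergence plus lower-semicontinuity estimate relating the finite-capacity integrals to the full integral over $\m C\setminus\g$. On one side, strong $J$-additivity (Corollary~\ref{cor_strong_J_add}) gives that the truncated $J$-energies $J(h_{T})$ are increasing in $T$ and bounded by $I^L(\g)<\infty$, so their limit exists and is at most $I^L(\g)$; combined with the chordal identity $J(h_T)=I(\g[0,T])\uparrow I^L(\g,\infty)$ this already yields one inequality and forces the limiting integral to equal the loop energy. The subtler point is to identify $\lim_{T\to\infty}\frac{1}{\pi}\int_{\S\setminus\ad\g_T}\abs{h_T''/h_T'}^2\,\dd z^2$ with the genuine loop integral $\frac{1}{\pi}\int_{\m C\setminus\g}\abs{\nabla\s_h}^2\,\dd z^2$ where now $h$ maps the two complementary components of the \emph{loop} onto half-planes. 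Here one must argue that the completed chord $\ad\g_T$ (which runs along $\g$ up to capacity $T$ and then follows a conformal geodesic to $\infty$) converges as a set to the loop $\g$, and that the corresponding mapping-out functions converge in a way that passes to the limit under the integral. I would invoke the Carath\'eodory-type convergence of the domains together with the lower-semicontinuity tool of Lemma~\ref{lem_lower_semi_J}, upgraded to the infinite-capacity setting via an exhaustion of $\m C\setminus\g$ by compacts, so that the truncated energies both dominate and converge to the loop energy.

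The main obstacle I anticipate is the behavior near the root $\infty$ and the matching of the two half-plane images across $\g$. In the chordal picture the two copies $\G$ and $\tilde\G$ of $\g\cup\m R_+$ were artificially distinguished by the slit, whereas for the loop the map $h$ must simultaneously uniformize both complementary Jordan domains $H_1$ and $H_2$ onto half-planes while fixing $\infty$; one must check that the log-derivative $\s_h=\log\abs{h'}$ defined piecewise on $H_1$ and $H_2$ has a well-defined finite Dirichlet energy on all of $\m C\setminus\g$ and that the conformal-geodesic tail contributed by the truncation genuinely disappears in the limit (its contribution is the portion $\ad\g_T\setminus\g$, across which $\s_h$ is continuous and the boundary terms cancel by the opposite orientation argument already used in Proposition~\ref{prop_linear}). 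Concretely, I expect to need a uniform bound, away from $\g$, on $\abs{h_T''/h_T'}$ so that dominated convergence applies on each compact exhausting subset, while lower-semicontinuity handles the reverse inequality globally.

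Once these pieces are in place, the equality $I^L(\g,\infty)=\frac{1}{\pi}\int_{\m C\setminus\g}\abs{\nabla\s_h}^2\,\dd z^2$ follows by squeezing: the finite-capacity identities give $I(\g[0,T])=J(h_T)$ for every $T$, the left side increases to $I^L(\g,\infty)$ by definition of the loop energy, and the right side increases to the loop integral by the exhaustion argument, so the two limits coincide. I would finally remark, as the excerpt notes, that Theorem~\ref{thm_chord_int_as_thm} is recovered as the special case where the loop is $\g\cup\m R_+$, since the admissible half-plane-uniformizing map there is exactly the restriction of the loop's $h$, thereby deducing the chordal theorem from the loop theorem rather than the reverse.
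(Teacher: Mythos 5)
Your overall strategy --- truncate at capacity $T$, use the chordal identity and $J$-additivity to see that $J(h_T)=I(\g[0,T])$ increases to $I^L(\g,\infty)$, and then pass to the limit --- is indeed the paper's strategy, and your one-sided estimate is correct: locally uniform convergence of the suitably renormalized inverse maps plus exhaustion of $\m C\setminus\g$ by compacts gives $J(h)\le\liminf_T J(h_T)=I^L(\g)$, exactly as in Lemma~\ref{lem_lower_semi_J}. But there is a genuine gap in the reverse inequality. Both of the tools you name --- dominated convergence on each compact $K\subset\m C\setminus\g$ and lower semicontinuity --- produce the \emph{same} inequality $\int_K\abs{\nabla\s_h}^2\dd z^2=\lim_T\int_K\abs{\nabla\s_{h_T}}^2\dd z^2\le\liminf_T J(h_T)$, hence $J(h)\le I^L(\g)$ after taking the supremum over $K$; neither rules out that Dirichlet mass of $\s_{h_T}$ concentrates near $\g$ (or near the not-yet-drawn tail $\G(T,\infty)$) and is lost in the limit, which is exactly what would make $J(h)<I^L(\g)$. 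So ``lower-semicontinuity handles the reverse inequality'' is backwards: semicontinuity only ever bounds the energy of the limit from above by the liminf. The paper closes this gap with the exact decomposition $J(h)=J(h_n)+J(\psi_n)+\frac{2}{\pi}\int\nabla\s_{h_n^{-1}}\cdot\nabla\s_{\psi_n}\dd z^2$, where $\psi_n=h\circ h_n^{-1}$ uniformizes the complement of the tail $h_n(\G[n,\infty])$ in $\S$; the already-proved inequality $J\le I^L$ applied to that tail gives $J(\psi_n)\le\int_n^\infty W'(t)^2/2\dd t\to0$, and Cauchy--Schwarz then kills the cross term, yielding $J(h)=\lim_n J(h_n)=I^L(\g)$. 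Some such orthogonality-plus-smallness argument is unavoidable and is absent from your proposal.

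A secondary point: you invoke Proposition~\ref{prop:eq_chord_1} to get $J(h_T)=I(\g[0,T])$ for the truncated arcs, but those arcs are rooted at $\infty$ and are capacity-parametrized on $(-\infty,T]$, whereas Proposition~\ref{prop:eq_chord_1} concerns finite-capacity chords growing from the tip of $\m R_+$ in $\S$. The paper needs the intermediate Lemma~\ref{lem_arc_identity}, proved by a separate ``blow-up at the root'' limit $t\to-\infty$ (normal families, convergence of the normalized maps $\varphi_t$ to the identity, and again a two-sided argument resting on Lemma~\ref{lem_strong_zero}), before the identity for the truncations is available. Your proposal silently assumes this step.
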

  Notice that the expression $J(h)$ on the right-hand side does not depend on the orientation of the loop, but does \emph{a priori} depend on the special point $\infty$ which is the root of $\g$.
  
     We have mentioned in the introduction that the loop energy is a generalization of the chordal energy. 
      In fact, consider the loop $\g = \m R_+ \cup \eta$, where $\eta$ is a simple chord in $(\S, 0 , \infty)$ from $0$ to $\infty$, 
      and we choose $\g(0) = \infty$, $\g(s_0)= 0$, $\psi_{s_0}(\cdot)$ to be $\sqrt {\cdot}$, the orientation such that $\g[0,s_0] = \m R_+$. Then from the definition, the driving function of $\g$ coincides with the driving function of $\eta$ in $\m R_+$ and is $0$ in $\m R_-$.
      Hence 
      $$ I(\eta) = I^L(\eta \cup \m R_+ , \infty).$$
   Theorem~\ref {thm_chord_int_as_thm} follows immediately from Theorem~\ref{thm_loop_identity}.

\medbreak

As we described above, loops can be understood as embedded arcs with $T = +\infty$. For arcs which do not make it all the way back to its root ($T < \infty$), the mapping-out function $h_T$ is a natural choice for the uniformizing function $h$.
Let us first prove the analogous identity for an embedded arc.

\begin{lem} \label{lem_arc_identity} If $\g$ is a simple arc in $\Chat$ such that $\g(0 ) = \infty$ with finite arc energy. Then 
\[ J (h ) = I^A(\g, \infty), \]
where $h = h_T$ is a mapping-out function of $\g$.
\end{lem}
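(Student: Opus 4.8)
The plan is to reduce Lemma~\ref{lem_arc_identity} to the already-established Proposition~\ref{prop:eq_chord_1} by transporting the arc $\g$ in $\Chat$ (with $\g(0)=\infty$) back to the standard slit-plane setting $(\S,0,\infty)$. Recall that the arc driving function $W:(-\infty,T]\to\m R$ is built from the maps $\psi_s$, and that $h_t=\psi_s^2$ maps the complement of $\g[0,s]$ onto $\S$. The idea is that once we normalize by the initial choice of $s_0$ and $\psi_{s_0}$, the flow $(h_t)$ for $t\in[t_0,T]$ (where $t_0$ corresponds to $s_0$) is \emph{exactly} a chordal Loewner flow in $(\S,0,\infty)$ driven by $W$ restricted to $[t_0,T]$. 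So the segment of the arc from $\g(s_0)$ to the tip $\g(1)$, viewed in the slit plane via $\psi_{s_0}^2$, is a finite-capacity chord in $(\S,0,\infty)$ to which Proposition~\ref{prop:eq_chord_1} applies directly. The remaining piece is the sub-arc $\g[0,s_0]$ which sits at the root end near $\infty$, and we must account for its contribution.

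Concretely, first I would fix the normalization $\g(s_0)=0$, $\psi_{s_0}=\sqrt{\cdot}$ so that $h_{t_0}=\mathrm{id}$ on $\S$, identifying $\psi_{s_0}^2(\g[s_0,1])$ with a chord $\eta$ in $(\S,0,\infty)$ of finite capacity. Proposition~\ref{prop:eq_chord_1} then gives
\[
J(h_T)\big|_{\text{over }\S\setminus\eta}=\frac1\pi\int_{\S\setminus\eta}\abs{\frac{h_T''}{h_T'}}^2\dd z^2=I_{(\S,0,\infty)}(\eta)=\int_{t_0}^{T}W'(t)^2/2\,\dd t,
\]
which is the contribution to $I^A$ coming from capacities in $[t_0,T]$. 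The second step is to handle the arc energy accumulated on $(-\infty,t_0]$, i.e.\ the part of the Dirichlet integral of $W$ coming from $\g[0,s_0]$ near the root $\infty$. Here I would use the strong $J$-additivity (Corollary~\ref{cor_strong_J_add}) together with the $I$-additivity of the arc energy: writing $h_T=h_{T,t_0}\circ h_{t_0}$ and iterating, both $J(h_T)$ and $I^A$ decompose compatibly over capacity sub-intervals, so it suffices to control each contribution. The key point is that as we push $t_0\to-\infty$ (equivalently $s_0\to0$, so $\g(s_0)\to\g(0)=\infty$), the uniformizing domains exhaust $\m C\setminus\g$ and the truncated integrals converge to the full $J(h)$.

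The main obstacle I anticipate is the passage to the limit $t_0\to-\infty$ (the root end): one must show that the $J$-energy of the mapping-out function, computed over the growing family of slit domains, converges to $J(h)$ for the full arc, and that no energy is lost or created at the root $\infty$. This requires a uniform control on $\abs{\nabla\s_h}$ near $\infty$ analogous to the decay estimate \eqref{eq_sigma_bound} used in the linear case, so that the tail contribution of the Dirichlet integral near the root vanishes. I would handle this by combining the finiteness bound $J(h)\le I^A(\g,\infty)$ (which follows exactly as in Corollary~\ref{cor_IJ_comparison}, since the arc energy is still a Dirichlet energy of a driving function and the same approximation by piecewise-linear functions plus lower semicontinuity of Lemma~\ref{lem_lower_semi_J} applies) with the matching lower bound $J(h)\ge I^A(\g,\infty)$ obtained by the additivity-plus-differentiation argument of the proof of Proposition~\ref{prop:eq_chord_1}. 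In other words, the whole machinery of Sections~\ref{sec_weak_J}--\ref{sec_approx_finite_chord} is essentially robust under replacing the finite chord by a finite-energy arc reaching its root at $\infty$; the only genuinely new content is verifying that the normalization at the root end is consistent, which amounts to checking that the contribution of the capacity interval $(-\infty,t_0]$ to both sides agrees in the limit. Once both inequalities are in hand, $J(h)=I^A(\g,\infty)$ follows.
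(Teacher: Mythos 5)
Your overall strategy---truncate the arc at a capacity time $t_0$, apply Proposition~\ref{prop:eq_chord_1} to the finite-capacity remainder, decompose $J$ additively with a vanishing cross term, and let $t_0\to-\infty$---is exactly the paper's ``blowing-up at the root'' argument; your decomposition $h_T=h_{T,t_0}\circ h_{t_0}$ is the paper's $h=\psi_t\circ\varphi_t$ up to affine renormalization. The gaps are in how you propose to justify the two limiting steps at the root end. First, the upper bound $J(h)\le I^A(\g,\infty)$ does not ``follow exactly as in Corollary~\ref{cor_IJ_comparison}'': that corollary rests on Corollary~\ref{cor_piecewise_linear} and Lemma~\ref{lem_lower_semi_J}, both stated for drivers on a compact interval $[0,T]$ generating chords from the fixed boundary point $0$ of $\S$, whereas the arc driver lives on $(-\infty,T]$ and the chain is rooted at $\infty$; a piecewise-linear approximation on an infinite time interval is not covered by the results you cite. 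The paper instead obtains the upper bound by applying the already-proved Proposition~\ref{prop:eq_chord_1} to the genuinely finite-capacity configuration $\varphi_t(\G[t,T])$ attached to the half-line $L_t$, normalizing $\varphi_t$ to fix three points so that $\varphi_t\to\mathrm{id}$ by a normal-families/Carath\'eodory argument, and then invoking the lower semicontinuity of $J$. This normalization step is what your outline is missing.

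Second, the tail control you anticipate needing (a decay estimate on $\abs{\nabla\s_h}$ near the root analogous to \eqref{eq_sigma_bound}) is neither available for a general finite-energy arc nor necessary: for the lower bound the paper simply writes $J(h)=J(\varphi_t)+J(\psi_t)\ge J(\psi_t)=I_{\G[-\infty,t]}(\G[t,T])$ and lets $t\to-\infty$, so one only drops the nonnegative term $J(\varphi_t)$ and no estimate at $\infty$ is required. Finally, the cross-term vanishing that you attribute to Corollary~\ref{cor_strong_J_add} needs a (mild but explicit) extension: it is Lemma~\ref{lem_strong_zero} applied with $\m R_+$ replaced by the half-line $L_t$ and with $g=\s_{\varphi_t^{-1}}$, whose membership in $\mc D^{\infty}$ is supplied by the upper bound proved first---so the order in which you establish the two inequalities matters, and the circularity you would otherwise run into (finiteness of $J(h_{t_0})$ being needed to prove finiteness of $J(h)$) is broken precisely by proving the upper bound before the additivity.
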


 \begin{figure}[ht]
 \centering
 \includegraphics[width=0.7\textwidth]{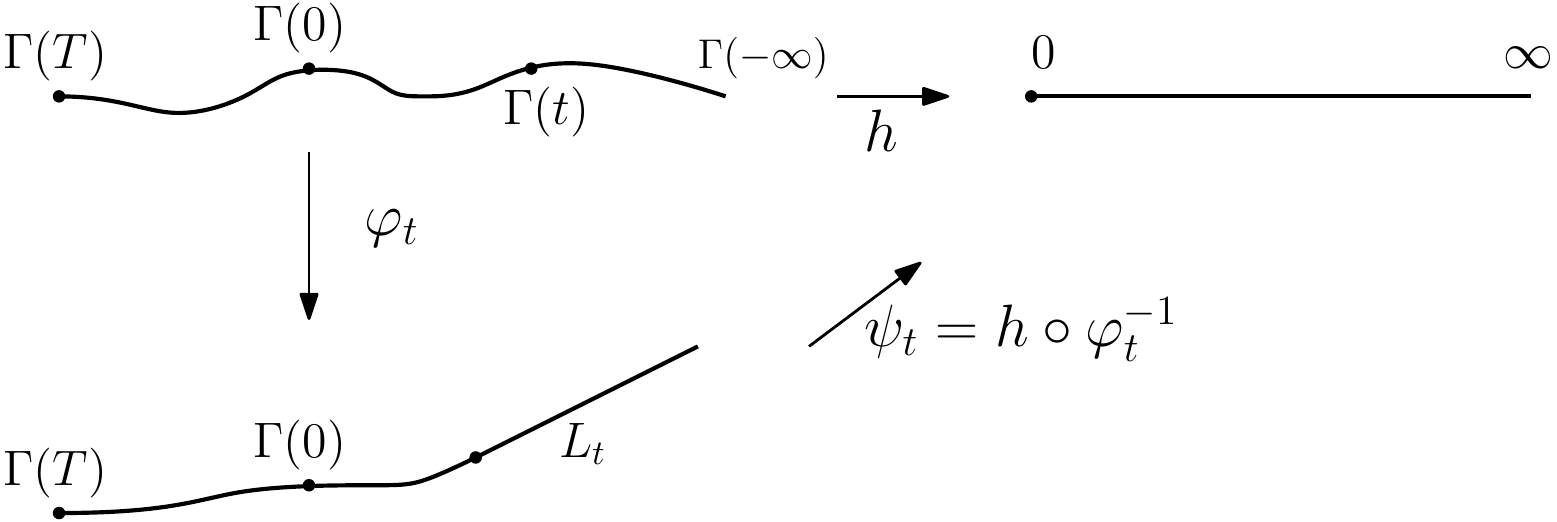}
 \caption{\label{fig_lem_arc} Conformal mappings in the proof of Lemma~\ref{lem_arc_identity} where $\varphi_t$ is defined in the complement of $\Gamma[-\infty,t]$ and $h$ in the complement of $\Gamma[-\infty ,T]$. Both of them map the tips to tips. } 
 \end{figure}
 
\begin{proof}
We will use the ``blowing-up at the root'' procedure to bring it back to the case of a finite capacity chord attached to $\m R_+$. Let $\G  [-\infty ,T] \to \Chat$ be the capacity reparametrization of $\g$ and $\G(-\infty) = \infty$  as described in the beginning of the section, and we choose  a point on $\g$ different from the tip $\g(1)$ to be $\g(s_0)$ so that $T >0$.   

   For every $t \in (-\infty,0]$, there exists a conformal mapping $\varphi_t$ fixing $\infty$, the tip $\G(T)$ and $\G(0)$  that maps the complement of $\G[-\infty, t]$ to a simply connected domain which is the complement of a half-line $L_t$. In fact, the mapping-out function of $\G[-\infty, t]$ maps the complement of $\G[-\infty,t]$ to the complement of $\m R_+$.
   %, that maps two tips to $0$ and $\infty$, 
   Then we use a M\"obius transformation of $\Chat$ which sends the image of $\G(0)$ and $\G(T)$ back to $\G(0)$ and $\G(T)$ while fixing $\infty$. 

We prove first 
\begin{equation} \label{eq_ineq_arc} J(h) \leq I^A(\G[-\infty, T], \infty). \end{equation}
   For $n \in \m N$, the family $(\varphi_t|_{\Chat \setminus \G[-\infty, -n]})_{t \leq -n}$ is a normal family, and by diagonal extraction, there exists a subsequence that converges uniformly on compacts in $\m C$ to a conformal map $\varphi$ that can be continuously extended to $\Chat$. Since $\varphi$ fixes three points on $\Chat$, it is the identity map. 
   
     Let $\G^t$ be the curve which consists of the image of $\G[t,T]$ under $\varphi_t$ attached to the half-line $L_t$. 
     The map $\psi_t := h \circ \varphi_t^{-1}$ maps the complement of $\G^t$ to the complement of $\m R_+$, that fixes $\infty$. From Proposition~\ref{prop:eq_chord_1} and the invariance of $J$ under affine transformations, 
     \[J(h \circ \varphi_t^{-1}) = I_{L_t} (\varphi_t(\G[t,T])) = I_{\G[-\infty, t]} (\G[t, T]). \]
     Hence, it follows from the lower-semicontinuity of $J$ and the definition of arc Loewner energy that
     \[J(h) \leq \liminf_{t \to -\infty } J(h \circ \varphi_t^{-1}) =  I^A (\G[-\infty, T], \G(-\infty)).\]
     
     For the other inequality, it suffices to show that 
     \begin{equation}\label{eq:loop_proof_sum}
          J(h) = J(\varphi_t) + J(\psi_t)
     \end{equation}
    as it implies that 
     \[J(h) \geq J(\psi_t) = I_{\G[-\infty, t]} (\G[t, T]) \xrightarrow[t \to -\infty]{} I^A (\G[-\infty, T], \G(-\infty)).\]
     
     In fact, \eqref{eq:loop_proof_sum} is equivalent to 
     \[\int_{\Chat \setminus \G} \nabla \s_{\psi_t} (\varphi_t(z)) \cdot \nabla \s_{\varphi_t}(z) \dd z^2 = \int_{\Chat \setminus \G^t} - \nabla \s_{\psi_t} (y) \cdot \nabla \s_{\varphi_t^{-1}}(y) \dd y^2 = 0.\]
     Notice that $\varphi_t^{-1}$ is conformal in the complement of $L_t$. From \eqref{eq_ineq_arc}, $\s_{\varphi_t^{-1}} \in \mc D^{\infty} (\Chat \setminus L_t)$ and the curve attached to $L_t$ has finite chordal energy which is equal to $I_{\G[-\infty, t]} (\G[t, T])$.  Hence we conclude with Lemma~\ref{lem_strong_zero} by replacing $\m R_+$ by $L_t$.  
  \end{proof}

  The proof of Theorem~\ref{thm_loop_identity} then consists of making $T \to \infty$. The strategy is the same as the proof of Lemma~\ref{lem_arc_identity}.
  As we assume (without loss of generality) that $\g$ passes through $0$, $1$ and $\infty$, we can choose the uniformizing mappings $h|_{H_1}$ and $h|_{H_2}$ that fix $0$, $1$ and $\infty$ on the boundary.
 
   \begin {proof}[Proof of Theorem~\ref{thm_loop_identity}] 
    \begin{figure}
 \centering
 \includegraphics[width=0.9\textwidth]{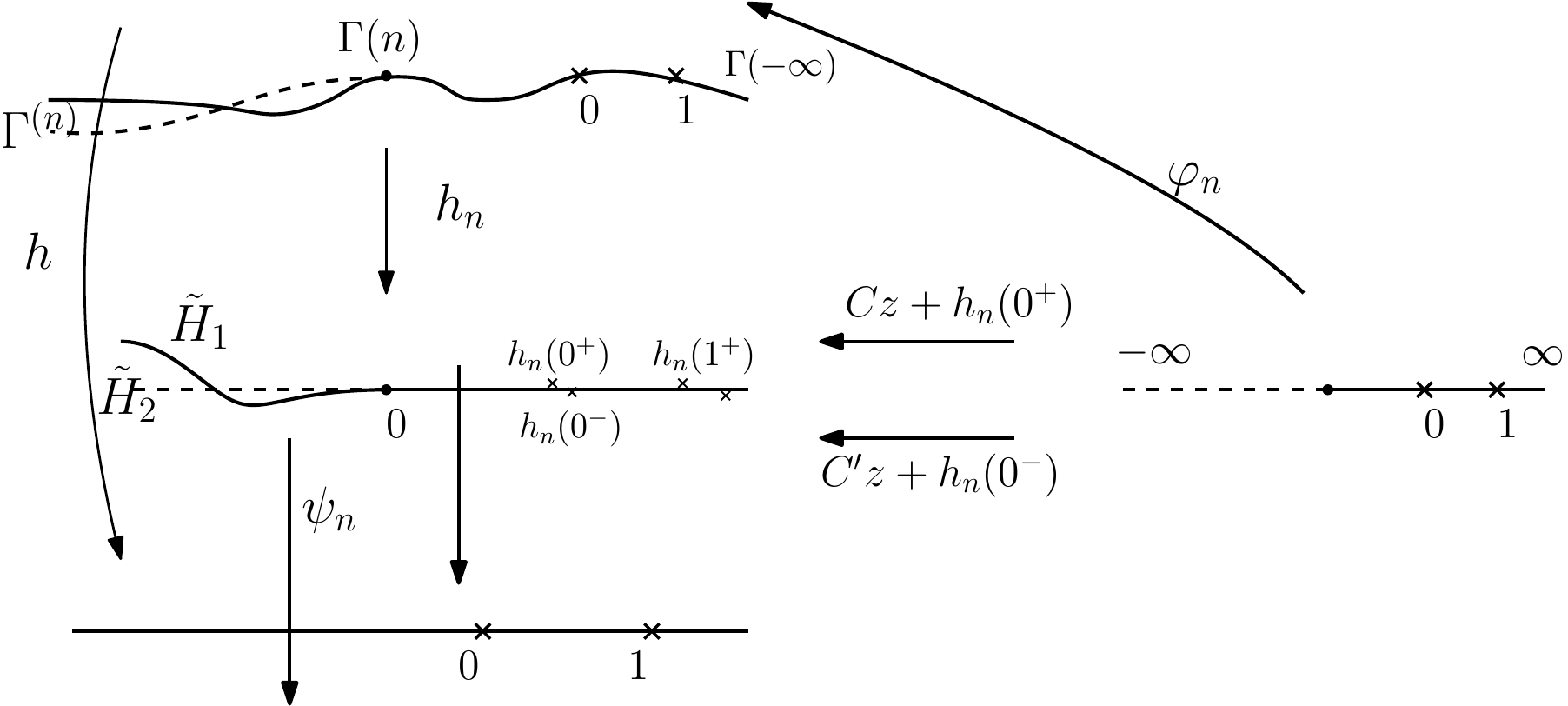}
 \caption{\label{fig_proof_thm_loop} Conformal mappings in the proof of Theorem~\ref{thm_loop_identity}. We define $\varphi_n (z) = (h_n)^{-1} (C z + h_n(0^+))$ on $\m H$ and $\varphi_n (z)= (h_n)^{-1} (C' z + h_n(0^-))$, where $C$ and $C'$ are chosen such that $\varphi_n$ fixes $0,1$ and $\infty$.} 
 \end{figure} 
   We prove first that $J(h) \leq I^L(\g, \g(0))$. Fix  a point $\g(s_0)$ on $\g$ and the conformal map $\psi_{s_0}$, let $(h_t)_{t \in \m R}$ be the mapping-out functions, $W$  the driving function and $\G$ the capacity reparametrized loop with $\G(-\infty) = \infty$.
   
     For $n \geq 0$, we consider $W^{(n)} (\cdot ) := W(\cdot \wedge n)$, and $\G^{(n)}$ the loop generated by $W^{(n)}$ which coincide with $\G$ on $[-\infty, n]$, that is the simple arc $\G[-\infty,n]$ followed by the conformal geodesic in $\m C \setminus \G[-\infty,n]$. 
   The mapping-out function $h_n$ of $\G[-\infty,n]$ maps both connected components $H^{(n)}_1$ and $H^{(n)}_2$ in the complement of $\G^{(n)}$ to half-planes.
   From Lemma~\ref{lem_arc_identity}, 
   \[I^L(\G^{(n)}) = I^A(\G[-\infty, n], \infty) = J(h_n).\]
   Notice that $h_n$ is not continuous on $\G[-\infty, n]$, we denote by $h_n(0^+)$ (resp. $h_n(0^-)$) the image of $0$ by $h_n|_{H_1}$ (resp. $h_n|_{H_2}$).
  Since $\G$ passes through $0$, $1$ and $\infty$ by assumption,  we define 
  $\varphi_n $ such that it maps respectively $\m H$ and $\m H^*$ to $H_1^{(n)}$ and $H_2^{(n)}$ while fixing $0$, $1$ and $\infty$. Let $\varphi = h^{-1}$.
   Since $(\varphi_n)_{n\geq 1}$ is a normal family, there exists a subsequence that converges uniformly on compacts, by Carath\'eodory kernel theorem, the limit is $\varphi$. Hence
   \[ I^L(\g) = \lim_{n\to \infty} J(h_n) =  \lim_{n \to \infty} J(\varphi_n) \geq J(\varphi). 
   \]

  Now we prove that $J(h) \geq I^L(\g)$. 
   Let $\psi_n := h \circ h_n^{-1}$ which maps each connected component $\tilde H_i := h_n(H_i)$ of $\S \setminus h_n( \G [n,\infty])$ to a half-plane, we have then
   \begin{equation}  \label{eq_loop_proof_cross}
   J(h) = J(\psi_n)+  J(h_n) + \frac{2}{\pi} \int_{\tilde H_1\cup \tilde H_2} \nabla \s_{h_n^{-1}} \cdot \nabla \s_{\psi_n} dz^2. 
   \end{equation}
   Lemma~\ref{lem_arc_identity} shows that $\s_{h_n^{-1}}$ has finite Dirichlet energy bounded by the arc Loewner energy of $\G[-\infty, n]$ hence by $I^L(\G, \infty)$. 
 On the other hand, the inequality $J(h) \leq I^L(\g)$ that we have proved above gives us the finiteness of the Dirichlet energy of $\s_{\psi_n}$: For every $\vare >0$, there exists $n_0$ large enough, such that $\forall n\geq n_0$,
   \[J(\psi_n) \leq I_{\m R_+} ( h_n(\G[n,\infty]) )= \int_n^{\infty} W'^2(t) /2 \dd t \leq \vare.\]

By the Cauchy-Schwarz inequality, the cross term in \eqref{eq_loop_proof_cross} converges to $0$ as $n \to \infty$, and $J(h_n)$ converges to $I^L(\G, \infty)$. 
Hence $J(h) \geq I^L(\g)$.
     \end{proof}
   
   \section{Zeta-regularized Determinants}\label{sec_detz}

In this section we prove the identity of the Loewner loop energy with a functional of zeta-regularized determinants of Laplacians (i.e., Theorem~\ref{thm_energy_determinant}
 which is the complete version of Theorem~\ref {thm_mr3}). This functional has also been studied in \cite{BFKM1994logdet} and is also reminiscent of the partition function formulation of the SLE/GFF coupling \cite{Dubedat_GFF}.

We first review the definition of zeta-regularized determinants of Laplacians \cite{RaySinger1971}: 
Let $\D$ be the Laplace-Beltrami operator with Dirichlet boundary condition on a compact surface $(D,g)$ with smooth boundary. In fact, all the statements below may hold under weaker regularity conditions. But for the well-definition of the zeta-regularized determinant, one needs (as far as we are aware) the boundary to be $C^{1,1}$ to get precise enough asymptotics of the trace of the heat kernel, and this condition is anyway much stronger than having finite Loewner energy boundary. 
Therefore, to stay on the safe side, we restrict ourselves in this section to smooth boundary domains to fit into the framework of \cite{BFK1992MV,BFKM1994logdet,OPS}.

The zeta-regularized determinant is defined, as its name indicates, through its zeta function:
$$\zeta_{-\D}(s) = \sum_{ j = 1}^{\infty} \l_j^{-s} = \frac{1}{\G(s)} \int_0^{\infty} t^{s-1} \text{Tr} (e^{t \D}) \dd t = \frac{1}{\G(s)} \int_0^{\infty} t^{s-1} \sum_{j=1}^{\infty} e^{-t \l_j} \dd t,$$
where $0 < \l_1 \leq \l_2 \cdots$ is the discrete spectrum of $-\D$.
From the Weyl's law \cite{weyl1912}, $\l_i$ grows linearly, $\zeta_{-\D}$ is therefore analytic in $\{\Re(s) > 1\}$. One extends $\zeta_{-\D}$ meromorphically to~$\m C$.

The trace of the Dirichlet heat kernel has an expansion as $ t \downarrow 0$ (see e.g. \cite{VanDenBerg1987} for $C^{1,1}$ domains):
$$\text{Tr} (e^{t\D}) = (4\pi t)^{-1} \left(\vol(D) - \frac{\sqrt{\pi t}}{2} l(\partial D) \right) + O(1),$$
where $l(\partial D)$ is the arclength of $\partial D$ and $\vol(D)$ the area of $D$ with respect to the metric $g$.
The zeta function is the Mellin transform divided by $\G(s)$ (\cite{BGV} Lemma 9.34) of the heat kernel, so that the above asymptotics imply that $\zeta_{-\D}$ has the following expansion near zero
$$\zeta_{-\D}(s) = O(s) + \lim_{t \searrow 0} \left[ \text{Tr} (e^{t\D}) - (4\pi t)^{-1} \left(\vol(D) - \frac{\sqrt{\pi t}}{2} l(\partial D) \right)\right];$$
it is therefore analytic in a neighborhood of $0$. 
The log of the \emph{zeta-regularized determinant of $-\D$} is defined as  
$$\log \detz (-\D): = - \zeta_{-\D}'(0).$$

The terminology ``determinant'' comes from the fact that 
$$-\zeta_{-\D}'(s) = \sum_{j= 1}^{\infty} \log (\l_j) \l_j^{-s},$$
so that if we take formally $s = 0$, we get
$$``-\zeta'_{-\D}(0) =  \log \left(\prod_{j= 1}^{\infty} \l_j \right) = \log \det (-\D)."$$

When $(M,g)$ is a compact surface without boundary, $\D$ has a one-dimensional kernel,  
and its regularized determinant $\detz'(-\D)$ is defined similarly by considering only the non-zero spectrum.

The zeta-regularized determinant of the Laplacian depends on both the conformal structure and the metric of the surface. 
Within a conformal class of metrics (two metrics $g$ and $g'$ are \emph{conformally equivalent} if $g'$ is a \emph{Weyl-scaling} of $g$, \ie $g' = e^{2\s} g$ for some $\s \in C^{\infty} (M)$), the variation of determinants is given by the so-called Polyakov-Alvarez conformal anomaly formula that we now recall (a proof of the formula can be found in \cite{OPS}).   

Let $(M,g_0)$ be a surface without boundary, and with the same notation for the metric, $(D,g_0)$ a compact surface with boundary. If $g = e^{2\s} g_0$ is a metric conformally equivalent to $g_0$, with the obvious notation associated to either $g_0$ or $g$, we denote by
\begin{itemize}
\item $\D_0$ and $\D_g$ the Laplace-Beltrami operator (with Dirichlet boundary condition for $D$), 
\item $\vol_0$ and $\vol_g$ the area measure, 
\item $l_0$ and $l_g$ the arclength measure on the boundary,
\item $K_0$ and $K_g$ the Gauss curvature in the bulk,
\item $k_0$ and $k_g$ the geodesic curvature on the boundary.
\end{itemize}
\begin{theorem} [Polyakov-Alvarez Conformal Anomaly Formula \cite{OPS}] 
\label{thm_polyakov}
For a compact surface $M$ without boundary, 
\begin{align*}
\log \detz' (-\D_{g}) = & -\frac{1}{6\pi} \left[ \frac{1}{2} \int_{M} \abs{\nabla_0 \s}^2 \dd \vol_0 + \int_M K_0 \s \dd \vol_0 \right] \\
 & + \log \vol_g (M)+ \log \detz' (-\D_0) - \log \vol_0(M).
\end{align*}
The analogue for a compact surface $D$ with smooth boundary is:
\begin{align*}
\log \detz (-\D_g) = & -\frac{1}{6\pi} \left[ \frac{1}{2} \int_{D} \abs{\nabla_0 \s}^2 \dd \vol_0 + \int_D K_0 \s \dd \vol_0 + \int_{\partial D} k_0 \s \dd l_0 \right] \\
 & - \frac{1}{4\pi} \int_{\partial D} \partial_n \s \dd l_0 + \log \detz (-\D_0),
\end{align*}
where $\partial_n$ is the outer normal derivative.
\end{theorem}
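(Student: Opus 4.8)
The plan is to prove both identities by a variational argument along the linear conformal interpolation $g_u := e^{2u\s}g_0$, $u\in[0,1]$, reducing everything to the short-time heat-trace asymptotics. Writing $F(u) := \log\detz(-\D_{g_u})$ (and $F(u) := \log\detz'(-\D_{g_u})$ in the closed case), I would show that $F$ is differentiable and recover the formula from $F(1) - F(0) = \int_0^1 F'(u)\dd u$. A pleasant feature of this route is that the quadratic Dirichlet term ultimately originates from the elementary identity $\int_0^1 u\dd u = 1/2$.

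First I would express $F'(u)$ through the heat kernel. With $\z_u$ the zeta function of $-\D_{g_u}$ one has $F(u) = -\z_u'(0)$ and $\z_u(s) = \G(s)^{-1}\int_0^\infty t^{s-1}\tr(e^{t\D_{g_u}})\dd t$. In dimension two the Laplacian is conformally covariant, $\D_{g_u} = e^{-2u\s}\D_{g_0}$, so $\partial_u\D_{g_u} = -2\s\,\D_{g_u}$. Duhamel's formula together with cyclicity of the trace yields $\partial_u\tr(e^{t\D_{g_u}}) = -2t\,\partial_t\tr(\s\,e^{t\D_{g_u}})$; substituting this and integrating by parts in $t$ gives
\[\partial_u\z_u(s) = \frac{2s}{\G(s)}\int_0^\infty t^{s-1}\tr(\s\,e^{t\D_{g_u}})\dd t.\]
Because $s/\G(s)$ vanishes to second order at $s = 0$ while the Mellin transform on the right has at most a simple pole there, differentiating at $s=0$ isolates a single coefficient, and I obtain $F'(u) = -2\,c_0(u)$, where $c_0(u)$ is the constant ($t^0$) term in the short-time expansion of $\tr(\s\,e^{t\D_{g_u}})$.

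Next I would identify $c_0(u)$ with the relevant local heat invariant. By the Minakshisundaram--Pleijel--Gilkey expansion, $c_0(u)$ is a universal combination $c_0(u) = \frac{1}{12\pi}\int_D\s K_{g_u}\dd\vol_{g_u} + (\text{boundary terms})$, where in the bounded case the boundary contributes multiples of $\int_{\partial D}\s\,k_{g_u}\dd l_{g_u}$ and $\int_{\partial D}\partial_n\s\,\dd l_{g_u}$. I would then insert the two-dimensional transformation laws under $g_u = e^{2u\s}g_0$, namely $K_{g_u}\dd\vol_{g_u} = (K_0 - u\,\D_0\s)\dd\vol_0$ and $k_{g_u}\dd l_{g_u} = (k_0 + u\,\partial_n\s)\dd l_0$ together with the scaling of $\partial_n$ and $\dd l$, so that $c_0(u)$ is expressed purely in $g_0$-data. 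In particular the interior term becomes $\int_D\s K_0\dd\vol_0 - u\int_D\s\,\D_0\s\dd\vol_0$, and Green's identity rewrites $-\int_D\s\,\D_0\s = \int_D\abs{\nabla_0\s}^2 - \int_{\partial D}\s\,\partial_n\s$.

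Finally I would integrate $F'(u) = -2c_0(u)$ over $u\in[0,1]$. The $u$-independent pieces integrate with weight $1$ and produce the linear terms $\int_D K_0\s$, $\int_{\partial D}k_0\s$ and $\int_{\partial D}\partial_n\s$, while the piece linear in $u$ (the $\D_0\s$ contribution) integrates with weight $\int_0^1 u\dd u = 1/2$ and yields exactly the $\tfrac12\int_D\abs{\nabla_0\s}^2$ Dirichlet term; matching the universal constants recovers the stated formula. In the closed case one works with $\tr'(e^{t\D_{g_u}})$, and the removal of the zero mode contributes an extra term $\partial_u\log\vol_{g_u}(M)$ to $F'(u)$, whose integral supplies the $\log\vol_g - \log\vol_0$ terms. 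I expect the main obstacle to be the determination of the boundary heat coefficients: the interior coefficient is classical and clean, but the $k_0\s$ and $\partial_n\s$ boundary terms demand a careful analysis of the heat kernel in a collar of $\partial D$, and it is precisely there that the smoothness (or at least $C^{1,1}$) hypothesis on $\partial D$ is needed to guarantee both the existence and the precise form of the $t^0$ coefficient.
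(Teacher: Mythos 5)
The paper does not actually prove this statement: it is quoted verbatim as a classical result, with the proof deferred to the cited reference \cite{OPS} (``a proof of the formula can be found in \cite{OPS}''). Your variational sketch along $g_u=e^{2u\s}g_0$ is precisely the classical Polyakov--Alvarez/Osgood--Phillips--Sarnak argument, and it is correct in outline: the Duhamel/cyclicity step $\partial_u\tr(e^{t\D_{g_u}})=-2t\,\partial_t\tr(\s e^{t\D_{g_u}})$, the isolation of the constant heat coefficient via the double zero of $s/\G(s)$ against the simple pole of the Mellin transform, the transformation laws $K_{g_u}\dd\vol_{g_u}=(K_0-u\D_0\s)\dd\vol_0$ and $k_{g_u}\dd l_{g_u}=(k_0+u\,\partial_{n_0}\s)\dd l_0$ (the latter is exactly Lemma A.1 of this paper), and the $\int_0^1 u\,\dd u=\tfrac12$ origin of the Dirichlet term all check out, as does the zero-mode correction $\partial_u\log\vol_{g_u}$ in the closed case. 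Two points deserve to be made explicit. First, the substantive analytic input you leave open is the precise form of the weighted boundary heat coefficients, namely that the $t^0$ term of $\tr(\s e^{t\D_{D,g}})$ is $\frac{1}{12\pi}\int_D\s K_g\dd\vol_g+\frac{1}{12\pi}\int_{\partial D}\s k_g\dd l_g+\frac{1}{8\pi}\int_{\partial D}\partial_n\s\dd l_g$ for the Dirichlet Laplacian; the equality of the interior and boundary-curvature coefficients (both $\frac{1}{12\pi}$) is not cosmetic but essential, because the term $-\tfrac{u}{2}\int_{\partial D}\s\,\partial_n\s\dd l_0$ produced by your Green's identity must cancel exactly against the $u$-linear part of $\frac{1}{12\pi}\int_{\partial D}\s k_{g_u}\dd l_{g_u}$; otherwise a spurious $\int_{\partial D}\s\,\partial_n\s\dd l_0$ term would survive in the final formula. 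Second, in the closed case the extra term arises from the large-$t$ end of the Mellin integral, where $\tr(\s e^{t\D_{g_u}})$ tends to $\vol_{g_u}^{-1}\int\s\dd\vol_{g_u}$ rather than to zero, so the integration by parts in $t$ must be performed after subtracting this constant. With these two points supplied (both are carried out in \cite{OPS} and in Alvarez's original computation), your outline is a complete and faithful reconstruction of the cited proof.
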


Let $M = S^2$ be the $2$-sphere equipped with a Riemannian metric $g$, $\g \subset S^2$ a smooth Jordan curve dividing $S^2$ into  two components $D_1$ and $D_2$. Denote by $\D_{D_i,g}$ the Laplacian with Dirichlet boundary condition on $(D_i, g)$.
We introduce the functional $\mc H (\cdot, g)$ on the space of smooth Jordan curves:

 \begin{align} \label{eq:def_H}
    \begin{split}
  \mc H (\g, g) : = & \log \detz'(-\D_{S^2, g}) -  \log \vol_g(S^2) \\
 & - \log \detz (-\D_{D_1, g}) -  \log \detz (-\D_{D_2,g}).      
    \end{split}
 \end{align}

As a side remark,  Burghelea, Friedlander and Kappeler \cite{BFK1992MV} (see also Lee \cite{Lee1997}) proved a Mayer-Vietoris type surgery formula for determinants of elliptic differential operators. 
In our case, it allows to express $\mc H$ by determinants of Neumann jump operators as in Theorem \ref{thm_mr3}. However, we will not use it in our proof.
  
\begin{theorem}[Mayer-Vietoris Surgery Formula \cite{BFK1992MV}] \label{thm_MV}
We have
$$\mc H (\g ,g) = \log \detz'(N(\g,g)) - \log l_g(\g),$$
where $N(\g, g)$ denotes the Neumann jump operator across the Jordan curve $\g$: for $f \in C^{\infty}(\g, \m R)$,
$$N(\g, g) f = \partial_{n_1} u_1 + \partial_{n_2} u_2,$$
where $n_i$ is the outer unit normal vector on the boundary of the domain $D_i$, $u_i$ is the harmonic extension of $f$ in $D_i$. 
\end{theorem}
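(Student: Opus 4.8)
The plan is to follow the surgery strategy of Burghelea, Friedlander and Kappeler, comparing the full Laplacian on $S^2$ with the \emph{decoupled} Dirichlet Laplacian on $D_1 \sqcup D_2$ and absorbing the difference into a boundary operator living on $\g$. First I would introduce, for each $i$, the Poisson extension $P_i : C^\infty(\g) \to C^\infty(D_i)$ sending boundary data $f$ to the harmonic function $u_i$ with $u_i|_\g = f$, together with the associated Dirichlet-to-Neumann operator $R_i f = \partial_{n_i} u_i$, so that $N(\g, g) = R_1 + R_2$. The basic geometric input is that if $u$ agrees with $P_i f$ on each $D_i$, then $u$ is harmonic off $\g$, continuous across $\g$, and satisfies $-\Delta_g u = (N(\g,g) f)\, \delta_\g$ in the sense of distributions; this ties the single-layer potential on $\g$ to the Green's function of $(S^2, g)$.

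To turn this geometric statement into a determinant identity I would work with the shifted family $-\Delta_g - \lambda$ for $\lambda$ in the resolvent set, with its $\lambda$-dependent Poisson and jump operators $P_i(\lambda)$ and $R(\lambda) = R_1(\lambda) + R_2(\lambda)$. The key analytic ingredient is a gluing formula for resolvents of the form
\[(-\Delta_{S^2,g} - \lambda)^{-1} = (-\Delta_{D_1,g}^{D} \oplus -\Delta_{D_2,g}^{D} - \lambda)^{-1} + P(\lambda)\, R(\lambda)^{-1}\, P(\lambda)^{*},\]
where the Dirichlet resolvents act on $D_1 \sqcup D_2$ and the last term is the rank-correction supported on $\g$. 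Differentiating in $\lambda$ and taking traces converts this into an identity between $\partial_\lambda \log \detz(-\Delta_{S^2,g} - \lambda)$, the analogous Dirichlet terms on $D_1$ and $D_2$, and $\partial_\lambda \log \detz R(\lambda)$, since the trace of $R(\lambda)^{-1} \partial_\lambda R(\lambda)$ is exactly the log-derivative of $\log \detz R(\lambda)$. This is the infinitesimal form of the claimed factorization.

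Integrating in $\lambda$ from $+\infty$ then yields the identity up to a $\lambda$-independent constant, and I would fix that constant from the large-$\lambda$ behaviour: as $\lambda \to -\infty$ the operator $R(\lambda)$ is a first-order elliptic pseudodifferential operator on $\g$ with principal symbol $2\sqrt{-\lambda + \abs{\xi}^2}$ (two copies of the one-sided Dirichlet-to-Neumann symbol), and both sides can be matched using the heat-trace asymptotics recalled earlier in this section together with the $\zeta$-determinant of this model operator. Finally I would send $\lambda \to 0$: the lowest eigenvalue of $-\Delta_{S^2,g}$ tends to $0$ while $R(\lambda)$ acquires the one-dimensional kernel spanned by constants, so extracting the finite parts of both sides forces the passage from $\detz$ to the primed determinant $\detz'$ and produces precisely the correction terms $\log \vol_g(S^2)$ in $\mc H(\g,g)$ and the factor $-\log l_g(\g)$ on the right.

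The main obstacle is the determination of the integration constant: it requires the full symbol calculus for the two-sided Dirichlet-to-Neumann operator and a clean evaluation of the $\zeta$-regularized determinant of the comparison operator $\sqrt{-\Delta_\g - \lambda}$, which is where the length $l_g(\g)$ enters. The secondary delicate point is the zero-mode bookkeeping at $\lambda = 0$, where one must simultaneously track the vanishing eigenvalue of the closed-surface Laplacian and the constant in the kernel of $N(\g,g)$ in order to check that the volume and length normalizations appear with the stated signs and that no further constant survives.
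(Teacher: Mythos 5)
You should first be aware that the paper offers no proof of this statement: it is imported as is from Burghelea--Friedlander--Kappeler \cite{BFK1992MV} (see also Lee \cite{Lee1997}), and the surrounding text explicitly presents it as a side remark that is \emph{not} used anywhere in the paper --- the determinant identity of Theorem~\ref{thm_energy_determinant} is proved directly from the Polyakov--Alvarez formula applied to the definition \eqref{eq:def_H} of $\mc H$, bypassing $N(\g,g)$ entirely. So there is nothing internal to compare your argument with; what you have written is in substance a reconstruction of the original BFK proof, and it is the correct strategy: the Krein-type gluing formula for the resolvents, the identity $\partial_\lambda R(\lambda)=-P(\lambda)^*P(\lambda)$ which converts the trace of the correction term into $-\partial_\lambda \log \detz R(\lambda)$, the evaluation of the $\lambda$-independent constant from the $\lambda\to-\infty$ symbol asymptotics, and the zero-mode analysis at $\lambda=0$. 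For the last point, first-order perturbation theory gives $-\vol_g(S^2)/l_g(\g)$ as the derivative of the small eigenvalue of $R(\lambda)$, since $\langle P(0)^*P(0)\,1,1\rangle_{L^2(\g)}=\vol_g(S^2)$ while $\|1\|^2_{L^2(\g)}=l_g(\g)$; comparing with the small eigenvalue $-\lambda$ of $-\D_{S^2,g}-\lambda$ is exactly how $\vol_g(S^2)$ and $l_g(\g)$ enter with the stated signs, consistent with the form of $\mc H$.

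Two steps need more care than your sketch suggests. First, in dimension two none of the three resolvents is individually trace class (the eigenvalues grow only linearly), so one cannot differentiate the three determinants separately and ``take traces'' term by term; the argument must be run on the difference $P(\lambda)R(\lambda)^{-1}P(\lambda)^*$, which \emph{is} trace class because $R(\lambda)^{-1}\partial_\lambda R(\lambda)$ is a pseudodifferential operator of order $-2$ on the one-dimensional manifold $\g$, and the identification of this trace with the $\lambda$-derivative of the zeta-regularized determinants has to be justified at the level of the zeta functions of the holomorphic family. Second, the assertion that the integration constant equals $1$ --- so that no multiplicative anomaly survives in the final formula --- is a genuine computation with the full symbol of $R(\lambda)$ in the model case, not a routine normalization; it is precisely this computation (together with the zero-mode bookkeeping above) that constitutes the main content of \cite{BFK1992MV} in the present setting, so your proposal should be read as a correct plan whose two ``obstacles'' are in fact the substance of the cited proof.
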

Choosing the outer normal derivatives makes $N(\g, g)$ a non-negative, essentially self-adjoint operator. 
Its zeta-regularized determinant is defined similarly as for $-\D$: we use its positive spectrum to define the zeta function then take $-\log \detz N (\g, g)$ to be the derivative of zeta function's analytic continuation at $0$. 
Notice that the harmonic extensions $u_i$ depend on the metric only by its conformal class and the normal derivatives depend on the data of $g$ only in a neighborhood of $\g$.  
By simply applying the Polyakov-Alvarez formula, we obtain the following proposition.
  \begin{prop} \label{prop_weyl_invariant}
  The functional  $\mc H(\cdot, g)$ is invariant under Weyl-scalings.
  \end{prop}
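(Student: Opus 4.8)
The goal is to show that $\mc H(\cdot, g)$, defined in \eqref{eq:def_H}, is unchanged when we replace $g$ by a conformally equivalent metric $g' = e^{2\s} g$ with $\s \in C^{\infty}(S^2)$. The plan is to simply expand each of the four terms in the definition of $\mc H(\g, g')$ using the two halves of the Polyakov-Alvarez formula (Theorem~\ref{thm_polyakov}), and then verify that all the contributions from $\s$ cancel. The key structural point is that the same curve $\g$ bounds $D_1$ and $D_2$, so the boundary integrals appearing in the surface-with-boundary formula will occur twice, once for each side, with opposite orientations of the outer normal, and these should cancel pairwise.

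First I would write $\mc H(\g, g')$ term by term. For the closed-surface term $\log \detz'(-\D_{S^2, g'})$, the first (no-boundary) formula of Theorem~\ref{thm_polyakov} gives a bulk Dirichlet-energy term $-\frac{1}{12\pi}\int_{S^2}\abs{\nabla_0 \s}^2 \dd \vol_g$, a curvature term $-\frac{1}{6\pi}\int_{S^2} K_g \s \dd\vol_g$, together with $\log\vol_{g'}(S^2)$, $\log\detz'(-\D_{S^2,g})$, and $-\log\vol_g(S^2)$. For each of the two Dirichlet determinants $\log\detz(-\D_{D_i, g'})$, the second formula contributes the bulk terms $-\frac{1}{12\pi}\int_{D_i}\abs{\nabla_g \s}^2\dd\vol_g - \frac{1}{6\pi}\int_{D_i} K_g\s\dd\vol_g$, the boundary terms $-\frac{1}{6\pi}\int_{\g} k_g \s \dd l_g - \frac{1}{4\pi}\int_{\g}\partial_{n_i}\s\dd l_g$, and $\log\detz(-\D_{D_i,g})$. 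I would then subtract: the $-\log\vol_{g'}(S^2)$ coming from the definition cancels the $+\log\vol_{g'}(S^2)$ produced by the anomaly, leaving exactly the combination that defines $\mc H(\g, g)$ plus a collection of $\s$-dependent remainder integrals.

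The heart of the argument is that these remainder integrals vanish. The two bulk Dirichlet-energy pieces over $D_1$ and $D_2$ reassemble to $+\frac{1}{12\pi}\int_{S^2}\abs{\nabla_g\s}^2$ (since $D_1 \sqcup D_2$ is $S^2$ up to the measure-zero curve $\g$), which is conformally invariant in dimension two and hence equals $\frac{1}{12\pi}\int_{S^2}\abs{\nabla_0\s}^2\dd\vol_g$; this cancels the $-\frac{1}{12\pi}\int_{S^2}\abs{\nabla_0\s}^2$ from the closed-surface formula. Likewise the two bulk curvature terms over the $D_i$ reassemble against the closed-surface curvature term. The boundary geodesic-curvature integrals $-\frac{1}{6\pi}\int_\g k_g\s\dd l_g$ from the two sides have opposite signs because the geodesic curvature of $\g$ as seen from $D_1$ is the negative of that seen from $D_2$, so they cancel; similarly the normal-derivative terms $-\frac{1}{4\pi}\int_\g\partial_{n_i}\s\dd l_g$ cancel because $n_1 = -n_2$ along $\g$, giving $\partial_{n_1}\s + \partial_{n_2}\s = 0$.

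The main obstacle I anticipate is bookkeeping the signs and the reconciliation between the closed-surface formula (whose $\s$-terms are written with respect to $g_0$) and the two boundary formulas (which I am applying with base metric $g$). The cleanest route is to apply all three instances of Theorem~\ref{thm_polyakov} with the \emph{same} base metric $g$, so that every bulk integral is naturally expressed over $(S^2, g)$ or $(D_i, g)$, and then to exploit the conformal invariance of $\int\abs{\nabla\s}^2$ to match the closed-surface Dirichlet term. One must also check that the curvature and geodesic-curvature contributions combine correctly; here the decomposition $\int_{S^2} = \int_{D_1} + \int_{D_2}$ for the bulk curvature and the orientation reversal $k_{g}^{(1)} = -k_{g}^{(2)}$, $n_1 = -n_2$ on the shared boundary $\g$ are exactly what is needed, and no Gauss--Bonnet input beyond these local identities is required.
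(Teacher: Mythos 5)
Your proposal is correct and takes essentially the same approach as the paper: expand all three determinants with the Polyakov--Alvarez formula over a common base metric, cancel the bulk terms via $\int_{S^2}=\int_{D_1}+\int_{D_2}$, and cancel the boundary terms via the orientation reversal along $\g$. The only (harmless) deviation is that you cancel the normal-derivative integrals directly from $\partial_{n_1}\s=-\partial_{n_2}\s$, whereas the paper first rewrites $\partial_{n_i}\s$ via the geodesic-curvature transformation law (Lemma~\ref{lem:geodesic_curvature}) and then invokes the same sign flip for the curvatures.
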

  \begin{proof} Let $\s \in C^{\infty} (S^2, \m R)$ and $g = e^{2\s} g_0$, 
  \begin{align*}
  &\mc H(\g, g) - \mc H(\g, g_0)  \\
  = & \log \detz'(-\D_{S^2,g}) - \log \vol_g(S^2) - \left( \log \detz'(-\D_{S^2,0}) - \log \vol_0(S^2)\right)\\
  & - \sum_{i = 1}^2 \Big(\log \detz(-\D_{D_i, g}) - \log \detz(-\D_{D_i, 0})\Big)\\
  = & -\frac{1}{6\pi} \left[ \frac{1}{2} \int_{S^2} \abs{\nabla_0 \s}^2 \dd \vol_0 + \int_S^2 K_0 \s \dd \vol_0 \right] \\
  & - \sum_{i = 1}^2 \Big( -\frac{1}{6\pi} \left[ \frac{1}{2} \int_{D_i} \abs{\nabla_0 \s}^2 \dd \vol_0 + \int_{D_i} K_0 \s \dd \vol_0 + \int_{\partial D_i} k_{i,0} \s \dd l_0 \right] \\
  & \hspace{35pt}  - \frac{1}{4\pi} \int_{\partial D_i} \partial_{n_i} \s \dd l_0 \Big),
  \end{align*}
  where $k_{i, 0}$ is the geodesic curvature on the boundary of $D_i$ under the metric $g_0$.
  The domain integrals cancel out.
  And for $z \in \g$, we have $k_{1,0}(z) = - k_{2,0}(z)$, thus the terms $\int_{\partial D_i} k_{i,0} \s \dd l_0 $ sum up to $0$.
  We have also the relation (Lemma~\ref{lem:geodesic_curvature})
  $$\partial_{n_i} \s = k_{i, g} e^{\s}- k_{i, 0},$$
  which yields
  \begin{align*}
  \int_{\partial D_i} \partial_{n_i} \s \dd l_0 & = \int_{\partial D_i}k_{i, g} e^{\s} - k_{i, 0} \dd l_0 \\
  & =    \int_{\partial D_i}  k_{i, g} \dd l_g -  \int_{\partial D_i} k_{i, 0}  \dd l_0
  \end{align*}
  that sum up to zero as well.
    \end{proof}
  
  \begin{cor} \label{cor_conf_inv}$\mc H(\cdot, g)$ is conformally invariant: let $\mu$ be a conformal map from $S^2$ onto $S^2$, then
  $$\mc H(\g, g) = \mc H (\mu (\g), g).$$
  \end{cor}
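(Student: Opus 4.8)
The plan is to combine the Weyl-scaling invariance just established in Proposition~\ref{prop_weyl_invariant} with the observation that a conformal automorphism of $S^2$ acts on a metric by a pullback that stays within the same conformal class. The key point is that every ingredient entering the definition~\eqref{eq:def_H} of $\mc H$---the regularized determinant of the Laplacian on the sphere, the total volume, and the two Dirichlet determinants on the complementary domains---is an \emph{isometry invariant}.

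First I would record that for a conformal bijection $\mu$ of $S^2$ and a metric $g$ in the given conformal class, the pullback $\mu^* g$ is again conformally equivalent to $g$. Indeed, fixing a reference metric $g_0$ in the class and writing $g = e^{2\rho} g_0$, the fact that $\mu$ is conformal means $\mu^* g_0 = e^{2\s_0} g_0$ for some $\s_0 \in C^{\infty}(S^2)$, whence $\mu^* g = e^{2(\rho\circ\mu + \s_0 - \rho)} g$, so that $\mu^* g = e^{2\s} g$ with $\s \in C^{\infty}(S^2)$.

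Next I would observe that $\mu \colon (S^2, \mu^* g) \to (S^2, g)$ is by construction an isometry, which sends $\g$ to $\mu(\g)$ and hence the two complementary components $D_1, D_2$ of $\g$ to the two complementary components of $\mu(\g)$. Since an isometry conjugates the Laplace-Beltrami operators (with Dirichlet boundary condition on each domain), it preserves their spectra and the volume, so every term in~\eqref{eq:def_H} is unchanged, giving $\mc H(\mu(\g), g) = \mc H(\g, \mu^* g)$.

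Finally, since $\mu^* g = e^{2\s} g$ is a Weyl scaling of $g$, Proposition~\ref{prop_weyl_invariant} yields $\mc H(\g, \mu^* g) = \mc H(\g, g)$, and chaining the two equalities gives $\mc H(\mu(\g), g) = \mc H(\g, g)$, as desired. There is no genuine obstacle here: the only point requiring (entirely standard) care is the isometry invariance of the zeta-regularized determinant, which is immediate because the nonzero spectrum of the Laplacian is preserved under pullback by an isometry.
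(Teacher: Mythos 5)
Your proof is correct and follows essentially the same route as the paper: the identity $\mc H(\mu(\g), g) = \mc H(\g, \mu^* g)$ via isometry invariance, followed by Proposition~\ref{prop_weyl_invariant} applied to the Weyl scaling $\mu^* g = e^{2\s} g$. You simply spell out in more detail the two facts the paper leaves implicit (that $\mu^* g$ stays in the conformal class, and that pullback by an isometry preserves all the spectral and volume data).
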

  \begin{proof}
      We have
$$\mc H (\mu(\g), g) = \mc H(\g,  \mu^*g) = \mc H(\g,  g)$$
where $\mu^* g$ is the pull-back of $g$, that is conformally equivalent to $g$. The second equality follows from Proposition~\ref{prop_weyl_invariant}. 
    \end{proof}
  
 We are now ready to state the main result of this section:  
\begin{thm} \label{thm_energy_determinant}
If $g = e^{2\varphi} g_0$ is a metric conformally equivalent to the spherical metric $g_0$ on $S^2$, then:
\begin{enumerate}[(i)]
\item \label{item_circle_minimize} Circles minimize $\mc H(\cdot, g)$
 among all smooth Jordan curves.
\item \label{item_energy_determinant}  Let $\g$ be a smooth Jordan curve on $S^2$. 
We have the identity
\begin{align*}
I^L(\g, \g(0))& = 12 \mc H(\g, g) - 12 \mc H(S^1, g) \\
              & = 12 \log \frac{\detz(-\D_{\m D_1, g})\detz(-\D_{\m D_2, g})}{\detz(-\D_{D_1, g}) \detz(-\D_{D_2, g})},
\end{align*}
where $\m D_1$ and $\m D_2$ are the two connected components of the complement of $S^1$. 
\end{enumerate}
\end{thm}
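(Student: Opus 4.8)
The plan is to derive everything from the Polyakov--Alvarez anomaly formula (Theorem~\ref{thm_polyakov}) combined with the loop identity (Theorem~\ref{thm_loop_identity}). First I would dispose of the second equality in part~\ref{item_energy_determinant}, which is purely bookkeeping: expanding the definition \eqref{eq:def_H} of $\mc H$ for the two curves $\g$ and $S^1$, the terms $\log\detz'(-\D_{S^2,g})$ and $\log\vol_g(S^2)$ are common to both and cancel in the difference $12\,\mc H(\g,g)-12\,\mc H(S^1,g)$, leaving exactly $12\log(\detz(-\D_{\m D_1,g})\detz(-\D_{\m D_2,g}))-12\log(\detz(-\D_{D_1,g})\detz(-\D_{D_2,g}))$. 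So only the first equality requires analysis. By the conformal invariance of $\mc H$ (Corollary~\ref{cor_conf_inv}) and of the loop energy I may normalize $\g$ to pass through $\infty$, splitting $\m C$ into $H_1,H_2$, and take $h=h_i$ mapping each $H_i$ onto a half-plane $\m H_i$ as in Theorem~\ref{thm_loop_identity}; likewise the reference circle becomes $\m R\cup\{\infty\}$, so that $\m D_1,\m D_2$ become the two complementary half-planes.

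Next I would compare determinants domain by domain through the uniformizing maps. Writing $g=e^{2\phi}g_{\mathrm{euc}}$ in the chart $\m C$, the map $h_i^{-1}$ is an isometry from $(\m H_i,(h_i^{-1})^*g)$ onto $(H_i,g)$, so that $\log\detz(-\D_{H_i,g})=\log\detz(-\D_{\m H_i,(h_i^{-1})^*g})$, and the two metrics on the fixed half-plane $\m H_i$ are conformally equivalent with factor $v_i=\s_{h_i^{-1}}+\phi\circ h_i^{-1}-\phi$. Applying Theorem~\ref{thm_polyakov} to this conformally equivalent pair produces, as its leading (quadratic) contribution, the Dirichlet energy $\tfrac12\int_{\m H_i}\abs{\nabla v_i}^2\dd z^2$. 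The purely conformal part of this energy sums over $i$ to $\tfrac12\int_{\m C\setminus\g}\abs{\nabla\s_h}^2\dd z^2$ after using the inverse-map relation \eqref{eq_inverse_J}, and the anomaly coefficient $\tfrac1{6\pi}\cdot\tfrac12$ multiplied by the overall factor $12$ equals exactly $1/\pi$; by Theorem~\ref{thm_loop_identity} this is precisely $I^L(\g)$. Thus the target identity amounts to showing that nothing survives except this term.

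The heart of the proof, and the step I expect to be the main obstacle, is to show that all remaining contributions cancel. There are three sources: the curvature integrals $\int K_g v_i$ and the boundary integrals $\int k_g v_i$, $\int\partial_n v_i$ from Theorem~\ref{thm_polyakov}; and the cross and $\phi$-terms in the expansion $\abs{\nabla v_i}^2=\abs{\nabla\s_{h_i^{-1}}}^2+2\,\nabla\s_{h_i^{-1}}\!\cdot\!\nabla(\phi\circ h_i^{-1}-\phi)+\abs{\nabla(\phi\circ h_i^{-1}-\phi)}^2$. I would treat these in the spirit of Proposition~\ref{prop_weyl_invariant}: the piece $\phi\circ h_i^{-1}-\phi$ is a genuine smooth Weyl factor whose curvature and geodesic-curvature contributions cancel between the two sides because $k_1=-k_2$ along the common boundary and the bulk integrals over $H_1\sqcup H_2$ reassemble into sphere integrals identical for $\g$ and for $S^1$. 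The cross terms I would integrate by parts using the harmonicity of $\s_{h_i^{-1}}$, reducing them to boundary integrals matching those coming from Theorem~\ref{thm_polyakov}, invoking the Stokes-type identities of Lemma~\ref{lem_bv} and the normalization $h_i(z)=z+O(1)$ at $\infty$, which forces $v_i\to0$ near the root and suppresses the contributions near $\infty$. The genuine difficulty, absent in Proposition~\ref{prop_weyl_invariant}, is that the two sides of $\g$ carry \emph{different} conformal factors (the weldings of $h_1$ and $h_2$ disagree), so the cancellation is not term-by-term; the safety net is that, by the Weyl-invariance of $\mc H$, the whole answer is independent of $g$, so the $\phi$-dependent pieces must cancel and one only has to organize the cancellation cleanly, for instance by letting $g$ approximate a flat metric on larger and larger regions and controlling the error near $\infty$.

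Finally, part~\ref{item_circle_minimize} follows at once from the identity just proved. By Corollary~\ref{cor_conf_inv} every round circle $C$ satisfies $\mc H(C,g)=\mc H(S^1,g)$, so $12(\mc H(\g,g)-\mc H(C,g))=I^L(\g)\ge 0$, with equality if and only if $\g$ is a circle, since the Loewner loop energy is non-negative and vanishes only on circles. Hence circles minimize $\mc H(\cdot,g)$ among smooth Jordan curves.
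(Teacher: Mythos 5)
Your skeleton coincides with the paper's proof: the second equality in \ref{item_energy_determinant} is indeed pure cancellation of the common terms $\log\detz'(-\D_{S^2,g})$ and $\log\vol_g(S^2)$; part \ref{item_circle_minimize} follows from \ref{item_energy_determinant} together with Corollary~\ref{cor_conf_inv} exactly as you say; and for the first equality the paper also reduces to a single metric in the conformal class via Proposition~\ref{prop_weyl_invariant}, projects stereographically so that $\g$ and the reference circle pass through $\infty$, pulls back through $f=h^{-1}$, applies Theorem~\ref{thm_polyakov} with conformal factor $\s=\s_f+\psi\circ f-\psi$, and identifies the quadratic term with $I^L(\g)$ via Theorem~\ref{thm_loop_identity} (your coefficient check $12\cdot\tfrac{1}{12\pi}=\tfrac1\pi$ is correct). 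Up to that point you match the paper step for step.

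The gap is precisely the step you flag as the main obstacle, and neither of your two fallbacks closes it. The Weyl-invariance ``safety net'' only shows that the remainder $R=12\mc H(\g,g)-12\mc H(S^1,g)-\tfrac1\pi\int\abs{\nabla\s_h}^2\dd z^2$ takes the same value for every $g$ in the conformal class (since the Dirichlet term is metric-independent); it does not show that value is $0$, so you still must evaluate $R$ for at least one metric. Your flattening limit is problematic in your normalization: the curve passes through $\infty$, any metric conformal to $g_0$ carries total curvature $4\pi$ which must concentrate somewhere, and near $\infty$ the factor $v_i$ does not tend to $0$ (with the $0,1,\infty$-normalization $h_i(z)\sim c_iz$ and $v_i\to\log\abs{c_i}$), so the concentrated curvature and boundary contributions need not vanish in the limit. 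The paper instead takes $g=g_0$ spherical with $S^1$ a great circle through $\infty$, so that its image $\m R$ is a geodesic for both the Euclidean and the spherical metrics, and then computes the cross term $\int\inprod{\nabla_0\s_f}{\nabla_0(\psi\circ f-\psi)}_0\dd\vol_0$ and the term in $\psi\circ f-\psi$ explicitly by Stokes' formula, using $\D_0\psi=-K_0$ and $\partial_n\s_f=k(f)e^{\s_f}-k$ to convert all bulk integrals into boundary integrals; these cancel pairwise across the two copies of $\g$ and of $\m R$, and the surviving expression $\tfrac2\pi\int_{\m R\sqcup\m R}\bigl(-k\,\psi(f)-\s\,\partial_n\psi\bigr)\dd l$ vanishes because $k\equiv0$ and $\partial_n\psi=e^{\psi}k_0-k\equiv0$ on $\m R$. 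That explicit computation is the content your proposal is missing.
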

Let us make two remarks: 
\begin{itemize}
\item The right-hand side in \ref{item_energy_determinant} does not depend on the root, so that the root-invariance of the loop energy for smooth loops follows.
\item We also recognize the functional introduced in \cite{BFKM1994logdet}, where they defined
$$h_g(\g) : = \log \detz(-\D_{D_1, g}) + \log \detz(-\D_{D_2, g}),$$
so that our identity above can be expressed as 
$$I^L(\g) = 12 h_{g} (S^1) - 12 h_g(\g).$$
\end{itemize}

\begin{proof} The second equality in \ref{item_energy_determinant} follows directly from the definition. Since $I^L(\g)$ is non-negative, \ref{item_energy_determinant} implies that $S^1$ minimizes $\mc H(\cdot, g)$.
Corollary~\ref{cor_conf_inv} implies that $\mc H(C, g)  = \mc H (S^1,g)$ for any circle $C$ and we get \ref{item_circle_minimize}.

Therefore it suffices to prove the first equality in \ref{item_energy_determinant} for $g = g_0$ by Proposition~\ref{prop_weyl_invariant}. We also assume that $S^1$ is a geodesic circle and both $\g$ and $S^1$ pass through a point $\infty \in S^2$. 
We use the stereographic projection $S^2\setminus\{\infty\} \to \m C$ from $\infty$ and the image of $D_1$, $D_2$, $\m D_1$ and $\m D_2$ are $H_1$, $H_2$, $\m H$ and $\m H^*$.
With a slight abuse we use the same notation for the induced metric in $\m C$:
$$g_0 (z) = \frac{4 \dd z^2}{(1+\abs{z}^2)^2} = : e^{2\psi(z)} \dd z^2,$$
and $\inprod{\cdot}{\cdot}_0 : = g_0(\cdot, \cdot)$.  
Let $h$ be a conformal map that maps respectively from $H_1$ and $H_2$ to $\m H$ and $\m H^*$ fixing $\infty$ as in previous sections and we put $f = h^{-1}$. 
Let $g_1$ be the pull-back of $g_0$ by $f$:
\begin{align*}
g_1 (z) =& f^* g_0 (z) = e^{2\psi (f(z))} \abs{f'(z)}^2 \dd z^2 \\
 =&  e^{2\psi(f(z)) - 2 \psi(z) + 2\s_f(z)} g_0 (z) := e^{2\s(z)} g_0(z),
\end{align*}
where $\s_f (z)= \log \abs{f'(z)} $ and we set 
$$\t (z) = \psi(f(z)) - \psi(z) $$ so that 
$$\s (z ) =\t (z) + \s_f(z). $$
From the Polyakov-Alvarez conformal anomaly formula:
\begin{align*} 
 &\log \detz(-\D_{H_1, g_0}) - \log \detz(-\D_{\m H, g_0})
= \log \detz(-\D_{\m H, g_1}) - \log \detz(-\D_{\m H, g_0}) \\
= & -\frac{1}{6\pi} \left[ \frac{1}{2} \int_{\m H} \abs{\nabla_0 \s}^2 \dd \vol_0 + \int_{\m H} K_0 \s \dd \vol_0 + \int_{\m R} k_0 \s \dd l_0 \right] - \frac{1}{4\pi} \int_{\m R} \partial_{n_0} \s \dd l_0.
\end{align*}
As in the proof of Proposition~\ref{prop_weyl_invariant}, the last term above cancels out when we sum up both variations in $\m H$ and $\m H^*$. 
We have $K_0 \equiv 1$, $k_0 \equiv 0$, but as we will reuse the proof in Section~\ref{sec_WP}, we keep first $K_0$ and $k_0$ in the expressions.
The right-hand side in \ref{item_energy_determinant} is equal to
\begin{align}\label{eq_three_terms}
\begin{split}
&\frac{1}{\pi} \int_{\m H \cup \m H^*} \abs{\nabla_0 (\s_f + \t)}^2 + 2K_0 \s_f +2 K_0 \t \dd \vol_0  +  \frac{2}{\pi} \int_{\m R } k_0 \s \dd l_0\\
= & \frac{1}{\pi} \int \abs{\nabla_0 \s_f}^2 \dd \vol_0 + \frac{2}{\pi} \int \left(\inprod{\nabla_0 \s_f}{\nabla_0 \t}_0 +K_0 \s_f \right) \dd \vol_0\\
 & + \frac{1}{\pi}\int \left(\abs{\nabla_0 \t}^2  +2K_0 \t \right)\dd \vol_0  +  \frac{2}{\pi} \int_{\m R } k_0 \s \dd l_0.
\end{split}
\end{align}
Since the Dirichlet energy is invariant under Weyl-scalings of the metric, the first term on the right-hand side of the equality is equal to $J(f)$, which is also $I^L(\g, \infty)$ by Theorem~\ref{thm_loop_identity}. As $k_0 \equiv 0$, we only need to prove that the sum of the second and the third terms vanishes.

We denote the quantities/operators/measures with respect to the Euclidean metric in $\m C$ without subscript, 
then we have
\begin{align*} 
  \D_0 & = e^{-2\psi} \D; \hspace{50pt} \partial_{n_0}  = e^{-\psi} \partial_n; \\
  \dd \vol_0 & = e^{2\psi } \dd z^2;  \hspace{50pt}  \dd l_0 = e^{\psi} \dd l;\\
  \partial_n \s_f (z) &= k(f (z)) e^{\s_f (z)} -k (z); \\
  \D_0 \psi & =  e^{-2\psi} \D \psi = e^{-2\psi} (K - e^{2\psi} K_0) = -K_0;\\
  \partial_{n_0} \psi &= e^{-\psi} \partial_n \psi =  e^{-\psi} (e^{\psi} k_0- k) =  k_0 -e^{-\psi} k.
\end{align*}
For the second term in \eqref{eq_three_terms}, from  Stokes' formula:
   \begin{align*}
    & \int_{\m H} \inprod{\nabla_0 \s_f}{\nabla_0 (\psi \circ f )}_0 \dd \vol_0\\
   & = \int_{\m R} \psi(f)  \partial_{n_0} \s_f \dd l_0 - \int_{\m H} \psi(f) \D_0 \s_f \dd \vol_0 =  \int_{\m R} \psi(f)  \partial_{n} \s_f \dd l  \\
   &= \int_{\m R}k(f) e^{\s_f} \psi(f)\dd l - \int_{\m R} k \psi (f) \dd l = \int_{\g} k \psi \dd l(z) - \int_{\m R} k \psi(f)\dd l,
   \end{align*}
   the contributions from the first term in the above expression cancels out when we sum up both sides. Similarly we have
    \begin{align*}
   \int_{\m H} \inprod{\nabla_0 \s_f}{\nabla_0 \psi}_0 \dd \vol_0 & = \int_{\m R} \s_f  \partial_{n_0} \psi \dd l_0 - \int_{\m H} \s_f \D_0  \psi \dd \vol_0 \\
   & = \int_{\m R} \s_f  \partial_{n_0} \psi \dd l_0  + \int_{\m H} K_0 \s_f \dd \vol_0.
   \end{align*}
   Hence the second term in \eqref{eq_three_terms} equals to
   $$ - \frac{2}{\pi} \int_{\m R \sqcup \m R}  \left ( k \psi (f) +\s_f  \partial_{n} \psi \right) \dd l .$$
   For the third term in \eqref{eq_three_terms}, notice that
  \begin{align*}
  \int_{\m H} \inprod{\nabla_0 (\psi \circ f)}{\nabla_0 \psi}_0 \dd \vol_0 & = \int_{\m R}\psi (f) \partial_{n_0} \psi \dd l_0 - \int_{\m H} \psi (f) \D_0  \psi  \dd \vol_0 \\
  &=\int_{\m R} \psi (f)  \partial_{n} \psi \dd l + \int_{\m H} \psi (f) K_0 \dd \vol_0.  
  \end{align*}
  Similarly,
   \begin{align*} & \int_{\m H} \inprod{\nabla_0 (\psi \circ f)}{\nabla_0 (\psi \circ f)}_0 \dd \vol_0 \\
   & = \int_{\m H} \inprod{\nabla_0 \psi}{\nabla_0 \psi}_0 \dd \vol_0= \int_{\m R} \psi \partial_{n} \psi \dd l + \int_{\m H} \psi K_0 \dd \vol_0.
   \end{align*}
   Hence the third term equals to
   \begin{align*}
   &\frac{1}{\pi} \int_{\m H \cup \m H^*} \inprod{\nabla_0 \t}{\nabla_0 \t}_0  +2 K_0 \t \dd \vol_0 \\
   &= \frac{2}{\pi} \left(\int_{\m R \sqcup \m R} \psi \partial_{n} \psi \dd l - \psi(f)  \partial_{n} \psi \dd l\right) = - \frac{2}{\pi} \int_{\m R \sqcup \m R} \t \partial_n \psi \dd l.
   \end{align*}
  Therefore the sum of the second and the third terms of \eqref{eq_three_terms} is equal to
   \begin{equation} \label{eq_vanish_det}
  \frac{2}{\pi} \int_{\m R \sqcup \m R}  -k \psi(f) - \s \partial_n \psi \dd l,
   \end{equation}
   which vanishes since $k, k_0 \equiv 0$ on $\m R$ and  $\partial_n \psi = e^{\psi} k_0 - k \equiv 0$ as well.
  \end{proof}

\section{Weil-Petersson class of loops} \label{sec_WP}
In this section we establish the equivalence between finite energy curves and Weil-Petersson quasicircles (we will prove Theorem~\ref{thm_energy_liouville}, which 
is the precise version of Theorem~\ref{thm_mr4}).

Let us start with some background material on the universal Teichm\"uller space $T(1)$ and the the Weil-Petersson Teichm\"uller space $T_0(1)$. We follow here the notations of \cite{TT2006WP}. 
We define
$$\m D =  \{z \in \m C , \abs{z} <1 \}, \quad \m D^* = \{z \in \m C, \abs{z} > 1\},$$
and let  $S^1 = \partial \m D$ be the unit circle.
Let QS$(S^1)$ be the group of sense-preserving quasisymmetric homeomorphisms of the unit circle (see e.g. \cite{lehto1973quasiconformal}), $\text {M\"ob}(S^1) \simeq \text{PSL}(2, \m R)$ the group of M\"obius transformations of $S^1$ and $\text{Rot}(S^1)$ the rotation group of $S^1$. 
The \emph{universal Teichm\"uller space} is defined as the right cosets
$$T(1) : = \text{M\"ob}(S^1) \backslash \QS(S^1) \simeq \{\varphi \in \QS(S^1), \,\varphi \text{ fixes } -1, -i \text{ and }1\}.$$
We write $[\varphi]$ for the class of $\varphi$.
From the Beurling-Ahlfors extension theorem, for every $\varphi \in \QS(S^1)$ fixing $-1, -i$ and $1$, there exists a unique $\a \in \text{M\"ob}(S^1)$ such that $\a (1) = 1$, and 
conformal maps $f$ and $g$ on $\m D$ and $\m D^*$ satisfying:
\begin{enumerate} [CW1.]
\item \label{CW_ext} $f$ and $g$ admit quasiconformal extensions to $\m C$.  
\item \label{CW_weld} $ \a \circ \varphi = g^{-1} \circ f|_{S^1}$.
\item \label{CW_f} $f(0) = 0$, $f'(0) = 1$, $f''(0 ) = 0$.
\item \label{CW_g_infty}$ g(\infty) = \infty$.
\end{enumerate} 
The conformal map $f$ admits a quasiconformal extension to $\m C$, means that the complex dilatation $\mu$ in $\m D^*$  of the extension, defined by
$$\mu_f (z) := \partial_{\ad z} f /\partial_{z} f (z), $$
is essentially uniformly bounded by some constant $k < 1$. 
Let $\mc U$ denote the set of conformal maps (univalent functions) on $\m D$, we have then
$$T(1) \simeq \{f \in \mc U, f(0)= 0, f'(0) = 1, f''(0) = 0, f \text{ admits q.c. extension to } \m C\}.$$
We say that $(f,g)$ are \emph{canonical conformal mappings associated to} $[\varphi] \in T(1)$.

Takhtajan and Teo have proved that $T(1)$ carries a natural structure of complex Hilbert manifold and that the connected component of the identity $T_0(1)$ is characterized by:
\begin{theorem} [\cite{TT2006WP} Theorem~2.1.12] \label{thm_TT_equiv_T01} A point $[\varphi]$ is in $T_0(1)$ if the associated canonical conformal maps $f$ and $g$ satisfy one of the following equivalent conditions:
\begin{enumerate}[(i)]
\item $\int_{\m D} \abs{f''(z)/f'(z) }^2  \dd z^2 < \infty;$
\item $\int_{\m D^*} \abs{g''(z)/g'(z) }^2  \dd z^2 < \infty;$
\item $\int_{\m D} \abs{S (f)}^2 \rho^{-1} (z) \dd z^2 < \infty;$
\item $\int_{\m D^*} \abs{S (g)}^2 \rho^{-1} (z) \dd z^2 < \infty,$
\end{enumerate}
where $\rho(z) \dd z^2= 1/(1-\abs{z}^2)^2 \dd z^2$ is the hyperbolic metric on $\m D$ or $\m D^*$ and 
$$S(f) = \frac{f'''}{f'} - \frac{3}{2} \left(\frac{f''}{f'}\right)^2$$
is the Schwarzian derivative of $f$.
\end{theorem}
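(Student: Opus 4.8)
The plan is to take as the working definition of $T_0(1)$ the characterization of Cui recalled in the introduction: $[\varphi]\in T_0(1)$ precisely when $\varphi$ admits a quasiconformal extension whose Beltrami coefficient $\mu$ satisfies $\int_{\m D}\abs{\mu(z)}^2\rho(z)\dd z^2<\infty$. With this definition fixed, I would split the equivalences into a purely function-theoretic part, relating (i) to (iii) and (ii) to (iv) on each side separately, and a Teichm\"uller-theoretic part, relating the Schwarzian conditions (iii) and (iv) to the defining Beltrami condition. The genuine difficulty lies in the second part.

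For the function-theoretic part, fix $f$ and set $w := f''/f' = (\log f')'$, so that $S(f)=w'-\tfrac12 w^2$; then (i) reads $\int_{\m D}\abs{w}^2\dd z^2<\infty$ and (iii) reads $\int_{\m D}\abs{w'-\tfrac12 w^2}^2(1-\abs{z}^2)^2\dd z^2<\infty$. Two ingredients close this. First, expanding $w(z)=\sum_{n\ge 1}a_n z^n$ (the normalization $f''(0)=0$ forces $w(0)=0$) and using $\int_{\m D}\abs{z}^{2n}(1-\abs{z}^2)^2\dd z^2=2\pi/((n+1)(n+2)(n+3))$ yields the two-sided comparison
\[
\int_{\m D}\abs{w}^2\dd z^2 \;\asymp\; \int_{\m D}\abs{w'}^2(1-\abs{z}^2)^2\dd z^2 ,
\]
so the \emph{linear} part $w'$ of the Schwarzian lies in the weighted space exactly when (i) holds. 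Second, the classical estimate for univalent functions $\abs{f''(z)/f'(z)}\le 6/(1-\abs{z}^2)$ gives $\abs{w}^4(1-\abs{z}^2)^2\lesssim\abs{w}^2$, whence the \emph{quadratic} term always satisfies $\int_{\m D}\abs{\tfrac12 w^2}^2(1-\abs{z}^2)^2\dd z^2\lesssim\int_{\m D}\abs{w}^2\dd z^2$. Combining the two through the triangle inequality in the weighted $L^2$ space yields (i)$\iff$(iii), and the identical argument on $\m D^*$ with $w=g''/g'$ gives (ii)$\iff$(iv).

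It then remains to cross from interior to exterior and to identify either Schwarzian condition with membership in $T_0(1)$. Here I would use the Ahlfors--Weill reflection together with the Bers embedding $[\varphi]\mapsto S(g)$. When the hyperbolic sup-norm of $S(g)$ is small, the Ahlfors--Weill section produces an explicit quasiconformal extension whose Beltrami coefficient obeys $\abs{\mu(z)}\asymp(1-\abs{z}^2)^2\abs{S(g)(z^*)}$, where $z^*=1/\bar z$ is the reflection of $z$ across $S^1$. Multiplying by $\rho(z)=(1-\abs{z}^2)^{-2}$, integrating, and changing variables by the reflection shows that $\int_{\m D}\abs{\mu}^2\rho\,\dd z^2<\infty$ holds if and only if $\int_{\m D^*}\abs{S(g)}^2\rho^{-1}\dd z^2<\infty$ (the extra bounded factor $\abs{w}^{-8}$ produced by the change of variables does not affect finiteness, since $S(g)=O(\abs{w}^{-4})$ at $\infty$). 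This identifies (iv) with the defining $L^2(\rho)$ Beltrami condition near the origin of $T(1)$; as that condition is manifestly symmetric under interchange of $\m D$ and $\m D^*$, the same argument applied to $f$ gives (iii), so locally (iii)$\iff$(iv)$\iff[\varphi]\in T_0(1)$.

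The main obstacle is precisely the passage away from the origin: the Ahlfors--Weill comparison is only valid for small Schwarzians, so I must globalize it. I would do so by exploiting the group structure of $T(1)$ and the transformation rule of Beltrami coefficients under composition to reduce an arbitrary point to a neighbourhood of the identity, checking that each of the four integrals changes in a controlled way under the relevant M\"obius and quasiconformal changes of variable. Making this precise is exactly the content of Takhtajan and Teo's analysis establishing that the Bers embedding restricts to a biholomorphism of $T_0(1)$ onto the Bergman space of Schwarzians and that the Weil--Petersson metric coincides with the $L^2(\rho)$ pairing; rather than reprove their Hilbert-manifold structure, I would invoke it at this stage and treat the local two-sided comparison above as the key lemma powering the equivalence.
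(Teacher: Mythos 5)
The paper does not prove this statement at all: it is quoted verbatim from Takhtajan--Teo \cite{TT2006WP} (Theorem~2.1.12) and used as imported background, so there is no internal proof to compare against. Judging your proposal on its own merits, the overall architecture (pre-Schwarzian $\leftrightarrow$ Schwarzian on each side, then Schwarzian $\leftrightarrow$ Beltrami coefficient via Ahlfors--Weill near the origin, then globalization) is indeed the shape of the argument in the literature, and your two computational ingredients are both correct: the coefficient identity gives $\int_{\m D}\abs{w}^2\dd z^2\asymp\int_{\m D}\abs{w'}^2(1-\abs{z}^2)^2\dd z^2$ for $w=f''/f'$ with $w(0)=0$, and univalence gives $\abs{w}\le 6/(1-\abs{z}^2)$, hence $\norm{w^2/2}_{wt}\le 3\norm{w}_{L^2}$.

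However, there is a genuine gap in the step ``(iii) $\Rightarrow$ (i) by the triangle inequality.'' Your two ingredients only yield (i) $\Rightarrow$ (iii). For the converse you would need to absorb: from $w'=S(f)+w^2/2$ you get $\norm{w}_{L^2}\le\sqrt{3/2}\,\norm{w'}_{wt}\le\sqrt{3/2}\,\bigl(\norm{S(f)}_{wt}+3\norm{w}_{L^2}\bigr)$, and since $3\sqrt{3/2}>1$ the $\norm{w}_{L^2}$ term on the right cannot be absorbed into the left; worse, you do not know $\norm{w}_{L^2}<\infty$ a priori, so the manipulation is not even formally legitimate. The known proofs of this direction do not go through a direct norm comparison of $w'$ and $S(f)$; they route through the quasiconformal extension (Cui's theorem that an $L^2(\rho)$ Beltrami coefficient forces $f''/f'\in L^2(\m D)$, combined with the Ahlfors--Weill identification you describe). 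Relatedly, your final globalization step ``invoke Takhtajan--Teo's Hilbert-manifold structure and the biholomorphy of the Bers embedding on $T_0(1)$'' is circular in this context: that structure is \emph{constructed} from the equivalences (i)--(iv), so it cannot be used to prove them. To close the argument you would need to carry out the reduction to the origin by hand, using the cocycle transformation of $\mu$, of $f''/f'$ and of $S(f)$ under composition with a fixed quasiconformal map, which is precisely the technical content of \cite{TT2006WP} that the present paper chooses to cite rather than reprove.
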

\begin{theorem} [\cite{TT2006WP} Theorem~2.4.1]
The universal Liouville action ${\bf S_1}: T_0(1) \to \m R$ defined by 
\begin{equation}\label{eq_def_S1}
{\bf S_1} ([\varphi]) : = \int_{\m D} \abs{\frac{f''}{f'}(z)}^2 \dd z^2 + \int_{\m D^*}\abs{\frac{g''}{g'}(z)}^2 \dd z^2 - 4\pi \log \abs{g'(\infty)},
\end{equation}
where $g'(\infty) = \lim_{z\to \infty} g'(z) = \tilde g'(0)^{-1}$ and $\tilde g(z) = 1/g(1/z)$, is a K\"ahler potential for the Weil-Petersson metric on $T_0(1)$.
\end{theorem}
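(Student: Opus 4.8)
The statement is Takhtajan and Teo's, so the plan is to follow the variational route: a smooth real function $S$ on a complex manifold is a K\"ahler potential for a given K\"ahler metric exactly when its complex Hessian $\partial\bar\partial S$ equals a fixed constant multiple of the associated K\"ahler form. I would therefore aim to compute $\partial\bar\partial{\bf S_1}$ and identify it with the Weil-Petersson form $\omega_{\WP}$ on $T_0(1)$. Throughout I would work in the model $T(1) \simeq \{f \in \mc U : f(0)=0,\ f'(0)=1,\ f''(0)=0,\ f \text{ admits a q.c. extension}\}$ and represent a tangent vector at a point by a harmonic Beltrami differential $\mu$ (with respect to the hyperbolic metric $\rho$), the Weil-Petersson pairing being $\langle \mu,\nu\rangle_{\WP} = \int \mu\bar\nu\,\rho\,\dd z^2$.

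First I would check that ${\bf S_1}$ is a smooth (in fact real-analytic) real-valued function on the Hilbert manifold $T_0(1)$. This is where Theorem~\ref{thm_TT_equiv_T01} is used: on $T_0(1)$ the two Dirichlet integrals converge precisely because $f''/f'$ and $g''/g'$ are $L^2$, while the dependence of the canonical maps $f$ and $g$ on the Beltrami coefficient is holomorphic by the standard theory of the Beltrami equation with a parameter. The term $-4\pi\log|g'(\infty)|$ is there to correct the behaviour at $\infty$, so that ${\bf S_1}$ descends to a genuine function on $T_0(1)$ and so that the boundary contributions at $\infty$ produced below cancel.

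Next comes the first variation. Deforming along a one-parameter family with tangent $\mu$, the maps $f$, $g$ vary, and the variations $\dot f$, $\dot g$ --- hence those of $f''/f'$ and $g''/g'$ --- are given by the classical Cauchy-type integral representations for solutions of the Beltrami equation. Substituting into the Dirichlet integrals and integrating by parts turns the bulk integrals into contour integrals over $S^1$; using the welding relation \ref{CW_weld} together with the normalizations \ref{CW_f} and \ref{CW_g_infty}, and combining with the variation of $-4\pi\log|g'(\infty)|$, the first variation can be rewritten as a pairing of $\mu$ against the Schwarzian derivatives $S(f)$ and $S(g)$. This identifies the $(1,0)$-form $\partial{\bf S_1}$.

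The heart of the proof is the mixed second variation. I would apply $\bar\partial$ to the above, i.e. differentiate once more in an anti-holomorphic direction $\bar\nu$, using the classical formula for the variation of the Schwarzian (which for $z$ in the region where the map is conformal is a singular integral of the form $-\tfrac{6}{\pi}\int \mu(w)(w-z)^{-4}\,\dd w^2$). One checks that the $(2,0)$ and $(0,2)$ parts vanish and that the $(1,1)$ part collapses, after one more integration by parts, to the hyperbolic $L^2$-pairing $\langle\mu,\nu\rangle_{\WP}$ up to a universal constant; hence $\partial\bar\partial{\bf S_1}$ is a constant multiple of $\omega_{\WP}$, which is the claim. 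I expect the main obstacle to be the rigorous justification of this variational calculus in the infinite-dimensional setting: one must control the convergence of the singular integrals, justify differentiation under the integral sign and the repeated integrations by parts so that no boundary term over $S^1$ or at $\infty$ is lost, and confirm that harmonic Beltrami differentials do model the holomorphic tangent space of $T_0(1)$ with the stated pairing. This is exactly the place where the restriction to $T_0(1)$ --- the $L^2$ conditions of Theorem~\ref{thm_TT_equiv_T01} --- is indispensable, since on all of $T(1)$ the integrals defining ${\bf S_1}$ need not even converge.
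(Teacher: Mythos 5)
This statement is quoted from Takhtajan and Teo (\cite{TT2006WP}, Theorem~2.4.1); the paper gives no proof of it and only uses it as input, so there is nothing internal to compare your argument against. Measured against the proof in the cited memoir, your outline does identify the correct strategy: Takhtajan and Teo prove the statement by computing $\partial\bar\partial{\bf S_1}$ in Bers-type coordinates on $T_0(1)$, with tangent vectors represented by harmonic Beltrami differentials, and identifying the result with a fixed multiple of $\omega_{\WP}$.

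That said, what you have written is a road map rather than a proof: every step that carries mathematical content is deferred. Concretely, (i) you never derive the first variation of ${\bf S_1}$ --- in Takhtajan--Teo this is a separate theorem (their Theorem~2.3.8), asserting that $\partial{\bf S_1}$ is, up to a universal constant, the $(1,0)$-form $\mu\mapsto \int_{\m D^*} S(g)\,\mu\,\dd z^2$ built from the Schwarzian; its derivation needs the precise variational formulas for $f$, $g$ and $\log\abs{g'(\infty)}$ under a Beltrami deformation and a careful cancellation of the boundary terms on $S^1$ coming from the welding relation, none of which you carry out. (ii) The passage from $\partial{\bf S_1}$ to $\partial\bar\partial{\bf S_1}=c\,\omega_{\WP}$ rests on Ahlfors' formula for the variation of the Schwarzian and on the reproducing-kernel identity converting the resulting pairing into the hyperbolic $L^2$ inner product $\langle\mu,\nu\rangle_{\WP}$; you name this step but do not perform it, and it is exactly where the constant in front of $\omega_{\WP}$ (hence the normalization of ${\bf S_1}$) is pinned down. (iii) The smoothness of ${\bf S_1}$ on the Hilbert manifold $T_0(1)$ and the fact that $L^2(\rho)$ harmonic Beltrami differentials model its tangent space are themselves substantial results of the memoir, not consequences of Theorem~\ref{thm_TT_equiv_T01} alone. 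So the proposal is a correct description of the known proof, but none of the three pillars it rests on is actually established in it.
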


Notice that from Theorem~\ref{thm_TT_equiv_T01}, the right-hand side in \eqref{eq_def_S1} is finite if and only if $[\varphi] \in T_0(1)$.

We define similarly the \emph{universal Liouville action for quasicircles}. 
If $\g $ is a bounded quasicircle, we denote (and in the sequel) the bounded connected component of $\m C \setminus \g$ by $D$, and the unbounded connected component by $D^*$. 
Let $f$ be any conformal map from $\m D$ onto $D$, and $g$ from $\m D^*$ onto $D^*$ fixing $\infty$. 
Conformal maps from $\m D$ onto a quasidisk always admit a quasiconformal extension to $\m C$. We denote again by $f$ and $g$ their quasiconformal extension.
\begin{figure}[ht]
 \centering
 \includegraphics[width=0.6\textwidth]{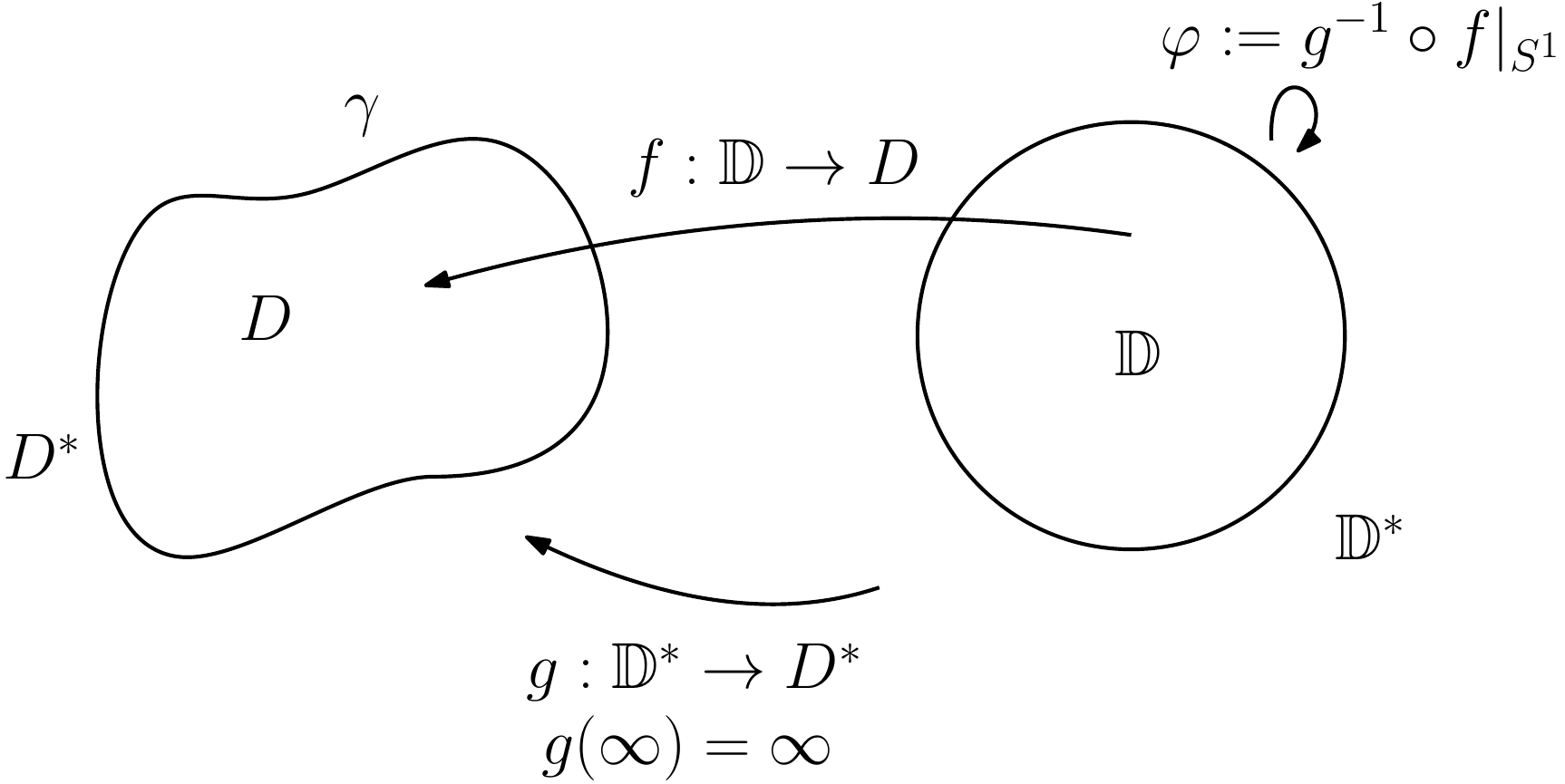}
 \caption{\label{fig_disk_welding} Welding function $\varphi$ of a simple loop $\g$. } 
 \end{figure}
We say that $\varphi: =  g^{-1} \circ f|_{S^1}$ is a \emph{welding function} of $\g$ (see Figure~\ref{fig_disk_welding}), which lies in QS$(S^1)$ as it is the boundary value of the quasiconformal map $g^{-1} \circ f$ on $\m D$ and does not depend on the extensions.

We say $\varphi \in \QS(S^1)$ is in the \emph{Weil-Petersson class} if $[\varphi] \in T_0(1)$, 
and $\g$ is a \emph{Weil-Petersson quasicircle} if its welding function $\varphi$ is in the Weil-Petersson class.
We define ${\bf S_1}(\g)$ to be 
$${\bf S} (f,g) : = \int_{\m D} \abs{\frac{f''}{f'}(z)}^2 \dd z^2 + \int_{\m D^*}\abs{\frac{g''}{g'}(z)}^2 \dd z^2  + 4\pi \log \abs{f'(0)} - 4\pi \log \abs{g'(\infty)},$$
which is finite if and only if $\g$ is a Weil-Petersson quasicircle and the value does not depend on the choice of $f$ and $g$.
In fact, for any other choice of conformal maps $\tilde f$ and $\tilde g$ for $\g$, there exists $\mu \in \text{M\"ob} (S^1)$ and $\nu \in \text{Rot} (S^1)$ such that $\tilde f = f \circ \mu$ and $\tilde g = g \circ \nu$.
It follows from explicit computations (\cite[Lem.~2.3.4]{TT2006WP}) that
$${\bf S} (f,g) = {\bf S} (\tilde f, \tilde g)$$
which is also equal to ${\bf S_1}([\varphi])$, see \cite[Lem.~2.3.4, Thm.~2.3.8]{TT2006WP}.

Now we can state the main theorem of this section: 
\begin{thm} \label{thm_energy_liouville}
Let $\g$ be a (bounded) Jordan curve, then $\g$ has finite Loewner energy if and only if $\g$ is a Weil-Petersson quasicircle. Moreover,
\begin{equation} \label{eq_energy_liouville}
I^L(\g) = \bf S_1(\g)/\pi.
\end{equation}
\end{thm}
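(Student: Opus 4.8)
The plan is to establish the identity first for smooth Jordan curves, where Theorem~\ref{thm_energy_determinant} applies, and then to remove all regularity by approximation. Working through the determinant functional $\mc H$ has the advantage that $12\,\mc H(\g,g_0)-12\,\mc H(S^1,g_0)$ carries no marked point or parametrization, so that no root subtleties arise. By M\"obius invariance of $I^L$ and ${\bf S_1}$, and by Proposition~\ref{prop_weyl_invariant}, I may assume that $\g$ and $S^1$ are smooth loops bounding $(D,D^*)$ and $(\m D,\m D^*)$ respectively, and I may compute $\mc H$ with the spherical metric $g_0=e^{2\psi}\dd z^2$.

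For the smooth case I would re-run the Polyakov--Alvarez computation of Theorem~\ref{thm_energy_determinant}, but uniformizing the complementary components by the \emph{disk} maps $f\colon\m D\to D$ and $g\colon\m D^*\to D^*$ normalized by $f(0)=0$ and $g(\infty)=\infty$, rather than by half-plane maps. Pulling $g_0$ back by $f$ gives a Weyl factor $\s=\s_f+\t$ with $\s_f=\log\abs{f'}$ and $\t=\psi\circ f-\psi$, exactly as in that proof; since $f$ is an isometry $(\m D,f^*g_0)\to(D,g_0)$ the determinants agree, and Theorem~\ref{thm_polyakov} then compares $\detz(-\D_{\m D,f^*g_0})$ with $\detz(-\D_{\m D,g_0})$, turning $12\,\mc H(\g,g_0)-12\,\mc H(S^1,g_0)$ into an integral of the same shape as \eqref{eq_three_terms}, now over $\m D\sqcup\m D^*$. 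The leading Dirichlet term equals $\tfrac1\pi\int_{\m D}\abs{f''/f'}^2\dd z^2+\tfrac1\pi\int_{\m D^*}\abs{g''/g'}^2\dd z^2$, i.e. the first two terms of ${\bf S_1}$ divided by $\pi$. The essential difference from Theorem~\ref{thm_energy_determinant} is that the boundary $S^1$ is now a \emph{round} circle, on which the Euclidean geodesic curvature $k$ does not vanish; consequently the cross-, $\t$- and curvature terms --- which summed to $0$ in \eqref{eq_vanish_det} --- now survive. Since $\s_f$ is harmonic, Stokes' formula and the mean value property reduce these surviving boundary integrals to the values $\log\abs{f'(0)}$ and $\log\abs{g'(\infty)}$ of $\log\abs{f'}$ and $\log\abs{g'}$ at the marked points, and after collecting constants they assemble into $4\log\abs{f'(0)}-4\log\abs{g'(\infty)}$. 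This would give $I^L(\g)=12\,\mc H(\g,g_0)-12\,\mc H(S^1,g_0)={\bf S}(f,g)/\pi={\bf S_1}(\g)/\pi$, and in particular finiteness of both sides, for every smooth $\g$.

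To pass to arbitrary curves I would approximate in both directions. If $I^L(\g)<\infty$, then $\g$ is a quasicircle \cite{LoopEnergy} and I approximate its loop driving function by smooth ones, producing smooth loops $\g_n\to\g$ (in the Carath\'eodory sense) with $I^L(\g_n)\to I^L(\g)$; the canonical maps $f_n,g_n$ then converge locally uniformly to $f,g$, so Fatou applied to the Dirichlet integrals of \eqref{eq_def_S1} gives ${\bf S_1}(\g)\le\pi I^L(\g)<\infty$, whence $[\g]\in T_0(1)$ by Theorem~\ref{thm_TT_equiv_T01}, and the lower semicontinuity of $I^L$ (through Theorem~\ref{thm_loop_identity} and Lemma~\ref{lem_lower_semi_J}) upgrades the inequality to equality. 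Conversely, if $\g$ is a Weil--Petersson quasicircle I would use that smooth loops are dense in $T_0(1)$ for the Weil--Petersson metric and that ${\bf S_1}$ is continuous there \cite{TT2006WP}, to find smooth $\g_n$ with ${\bf S_1}(\g_n)\to{\bf S_1}(\g)$; the smooth identity gives $I^L(\g_n)={\bf S_1}(\g_n)/\pi$, and lower semicontinuity of $I^L$ forces $I^L(\g)\le{\bf S_1}(\g)/\pi<\infty$. Combined with the first inequality this yields the equivalence together with \eqref{eq_energy_liouville}.

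I expect the difficulty to be twofold. First, the exact bookkeeping of the boundary terms in the smooth case: one must check that every metric-dependent contribution ($\psi$, $K_0$ and $k_0$) either cancels between $\g$ and $S^1$ or recombines precisely into the $\log\abs{f'(0)}$ and $\log\abs{g'(\infty)}$ normalizations, with the correct numerical constant --- this is exactly where the computation departs from \eqref{eq_vanish_det}. Second, in the approximation one must control $I^L$ and ${\bf S_1}$ simultaneously and in compatible topologies --- Carath\'eodory/energy convergence on the Loewner side versus Weil--Petersson convergence on the Teichm\"uller side --- in order to obtain the two opposite inequalities without circularity; reconciling these notions of convergence for non-smooth data, where the welding characterization of $T_0(1)$ \cite{Shen2017} and the Hilbert-manifold structure of \cite{TT2006WP} enter, is the delicate point.
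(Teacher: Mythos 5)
Your proposal is correct and follows the same two-stage strategy as the paper: prove $I^L(\g)={\bf S_1}(\g)/\pi$ for smooth loops by re-running the Polyakov--Alvarez computation of Theorem~\ref{thm_energy_determinant} with the disk uniformizations $f\colon\m D\to D$, $g\colon\m D^*\to D^*$, then pass to general curves by a two-sided approximation using lower semicontinuity of both $I^L$ and ${\bf S_1}$. The two points where you depart from the paper are exactly the two difficulties you flag at the end, and in both cases the paper has a specific device that makes the issue evaporate. First, instead of the spherical metric the paper computes $\mc H$ with a hybrid metric $g_0=e^{2\psi}\dd z^2$ chosen so that $\psi\equiv 0$ on a ball containing both $\g$ and $S^1$ and $g_0$ is spherical only near $\infty$: then the cross terms analogous to \eqref{eq_vanish_det} vanish identically (since $\psi$, $\psi\circ f$ and $\partial_n\psi$ all vanish near the boundary curves), and the only surviving contribution is $\frac{2}{\pi}\int_{S^1\sqcup S^1}k_0\s\dd l_0$ with $k_0=\pm1$, which the mean value property converts into $4\log\abs{f'(0)}-4\log\abs{g'(\infty)}$. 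Your pure-spherical version also closes --- there $k_0=0$ on the equator, and it is instead the cross terms $-k\psi(f)-\s\partial_n\psi$ that survive and, via $\partial_n\psi=-k$ and $\s-\psi(f)=\s_f-\psi$, reassemble into the same quantity with the $\psi$-integrals cancelling between the two copies of $S^1$ --- but the bookkeeping is heavier, and by Proposition~\ref{prop_weyl_invariant} the two computations must agree anyway. Second, for the direction ${\bf S_1}(\g)<\infty\Rightarrow I^L(\g)\le{\bf S_1}(\g)/\pi$, abstract density of smooth loops in $T_0(1)$ is not quite enough: you need one approximating sequence that converges to $\g$ simultaneously in the Weil--Petersson sense (so that ${\bf S_1}(\g_n)\to{\bf S_1}(\g)$ by Lemma~\ref{lem_TT_cor}) and uniformly as parametrized loops (so that lower semicontinuity of $I^L$ applies). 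The paper's choice $\g^n=f(c_nS^1)$ with $c_n\uparrow1$ does both at once, since $f_n(z)=c_n^{-1}f(c_nz)$ satisfies $\int_{\m D}\abs{f_n''/f_n'-f''/f'}^2\dd z^2\to0$ precisely because ${\bf S_1}(\g)<\infty$. With these two devices supplied, your argument is the paper's proof.
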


It is worth mentioning other characterizations of $T_0(1)$ due to Cui, Shen, Takhtajan and Teo, from which  one obtains immediately other analytic characterizations of finite energy loops given Theorem~\ref{thm_energy_liouville}:
\begin{theorem}[\cite{Cui2000,Shen2013,TT2006WP}]
   With the same notation as in Theorem~\ref{thm_TT_equiv_T01}, $\varphi$ is in the Weil-Petersson class if and only if one of the following equivalent condition holds:
   \begin{enumerate}[(i)]
   \item $\varphi$ has quasiconformal extension to $\m D$, whose complex dilation $\mu = \partial_{\ad z} \varphi /\partial_{z} \varphi$ satisfies
   $$\int_{\m D} \abs{\mu(z)}^2 \rho(z) \dd z^2 <\infty;$$
   \item $\varphi$ is absolutely continuous with respect to arclength measure, such that $\log \varphi'$ belongs to the Sobolev space $H^{1/2}(S^1)$;
   \item the Grunsky operator associated to $f$ or $g$ is Hilbert-Schmidt. 
   \end{enumerate}
\end{theorem}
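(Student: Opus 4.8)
Since the Weil-Petersson class means precisely $[\varphi]\in T_0(1)$, and Theorem~\ref{thm_TT_equiv_T01} already identifies this with the analytic conditions on the canonical maps $f,g$, the plan is to take the Schwarzian condition $\int_{\m D}\abs{S(f)}^2\rho^{-1}\dd z^2<\infty$ (equivalently $\int_{\m D}\abs{f''/f'}^2\dd z^2<\infty$) as a hub and to prove each of (i), (ii), (iii) equivalent to it, following Cui, Shen, and Takhtajan--Teo respectively.

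For (i) I would establish Cui's equivalence with the Beltrami condition in two steps. In the forward direction, given a quasiconformal extension with $\int_{\m D}\abs{\mu}^2\rho\dd z^2<\infty$, I would recover $S(f)$ from $\mu$ through the standard dependence of the normalized solution of the Beltrami equation on its coefficient (the Bers integral formula), and use the boundedness of the resulting integral operator from $L^2(\rho,\m D)$ into the weighted Bergman space of Schwarzians; the infinitesimal version of this estimate is the Ahlfors--Bers derivative computation, and the full statement is obtained by integrating along a path in $T_0(1)$. The converse is the more substantial direction: for small Schwarzian norm one uses the Ahlfors--Weill section, whose explicit Beltrami coefficient is a reflection of $S(f)$ weighted by $(1-\abs{z}^2)^2$ and hence has $L^2(\rho)$-norm controlled by the Bergman norm of $S(f)$, and the general case is reduced to the small-norm case using the connectivity of $T_0(1)$ as the identity component of the Hilbert manifold $T(1)$.

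For (ii) I would first record the elementary but central fact that, since $\log f'$ is holomorphic and non-vanishing on the simply connected disk, the Dirichlet integral $\int_{\m D}\abs{f''/f'}^2\dd z^2$ equals $\pi$ times the squared $H^{1/2}(S^1)$-seminorm of the boundary trace of $\log f'$ (comparing Fourier coefficients, $\int_{\m D}\abs{(\log f')'}^2\dd z^2=\pi\sum_{n\ge1}n\abs{a_n}^2$, which is $\pi$ times the $H^{1/2}$-seminorm). Thus the hub is equivalent to $\log f'|_{S^1}$ and $\log g'|_{S^1}$ lying in $H^{1/2}(S^1)$. Differentiating the welding relation $f=g\circ\varphi$ along $S^1$ and taking moduli gives $\log\varphi'=\Re\log f'|_{S^1}-(\Re\log g'|_{S^1})\circ\varphi$, so the forward implication follows once one knows that precomposition by a Weil-Petersson homeomorphism preserves $H^{1/2}(S^1)$ (the Nag--Sullivan bounded action of $\QS(S^1)$ on $H^{1/2}$). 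The converse --- manufacturing a Weil-Petersson extension from the bare boundary datum $\log\varphi'\in H^{1/2}$, with no conformal map available a priori --- is the core of Shen's argument and is the analytically hardest step of this part.

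For (iii), the Grunsky coefficients $\kappa_{mn}$ of $g$, defined by $\log\frac{g(z)-g(\zeta)}{z-\zeta}=-\sum_{m,n\ge1}\kappa_{mn}z^{-m}\zeta^{-n}$, assemble into the Grunsky operator with matrix $(\sqrt{mn}\,\kappa_{mn})_{m,n}$, which is a contraction that is strict precisely for quasicircles. The plan is to identify its Hilbert--Schmidt norm $\sum_{m,n}mn\abs{\kappa_{mn}}^2$ with a weighted Dirichlet integral equivalent to the hub: realizing the Grunsky operator as a concrete Hankel/Beurling-type operator on Bergman space and computing its Hilbert--Schmidt norm as a double integral comparable to $\int_{\m D^*}\abs{g''/g'}^2\dd z^2$, whose finiteness is Theorem~\ref{thm_TT_equiv_T01}(ii). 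This is the operator-theoretic computation of Takhtajan and Teo, and is comparatively self-contained. The main obstacle overall is therefore the pair of backward directions in (i) and (ii): reconstructing a genuinely $L^2(\rho)$-Beltrami (resp. Weil-Petersson) extension from the analytic (resp. boundary $H^{1/2}$) data, both of which are nonlinear and rely on the Hilbert-manifold connectivity of $T_0(1)$ and a tailored extension construction rather than on the soft estimates that suffice for the forward directions.
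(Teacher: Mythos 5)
There is nothing in the paper to compare your proposal against: this theorem is not proved in the paper at all. It is imported verbatim from the cited works \cite{Cui2000,Shen2013,TT2006WP}, and its only role is that, combined with Theorem~\ref{thm_energy_liouville}, it immediately yields further analytic characterizations of finite-energy loops. So the relevant comparison is with the cited literature, not with any in-paper argument.

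Measured against that literature, your sketch is an accurate roadmap: the hub strategy through Theorem~\ref{thm_TT_equiv_T01}, the Ahlfors--Weill reflection plus connectivity of $T_0(1)$ for the converse of (i), the boundary chain rule $\log\varphi'=\log\abs{f'}-\bigl(\log\abs{g'}\bigr)\circ\varphi$ together with the bounded action of quasisymmetric homeomorphisms on $H^{1/2}(S^1)$ (Nag--Sullivan) for the forward direction of (ii), and the identification of the Hilbert--Schmidt norm of the Grunsky operator with the $L^2$ norm of the kernel $g'(z)g'(\zeta)/(g(z)-g(\zeta))^2-1/(z-\zeta)^2$ for (iii) are indeed the tools used by Cui, Shen, and Takhtajan--Teo respectively. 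But as written it is a plan, not a proof: every analytically decisive step is named and deferred. Two gaps are worth flagging concretely. First, your forward direction in (ii) is circular as stated: differentiating the welding relation $f=g\circ\varphi$ along $S^1$ presupposes that $\varphi$ is absolutely continuous and that $f'$, $g'$ have a.e.\ nontangential boundary values compatible with the chain rule, and the absolute continuity of the welding is itself part of what is being asserted in (ii); establishing it is where the real work in \cite{Shen2013} lies. Second, in (i) the passage from the Ahlfors--Weill small-norm case to general Weil--Petersson points ``by connectivity'' is not soft: it needs the group structure of $T_0(1)$ under composition and the fact that right translations are biholomorphic, which occupies a substantial part of \cite{TT2006WP}. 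None of this faults your architecture, which matches the sources; it only means your proposal documents where the proofs live rather than supplying them --- which, for a statement the paper itself merely cites, is arguably the appropriate resolution, but it should not be mistaken for a self-contained argument.
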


Now we proceed to the proof of Theorem~\ref{thm_energy_liouville}. We first prove it for smooth loops using results from Section~\ref{sec_detz}.
\begin {proof}[Proof for smooth loops]
Let $\g$ be a smooth (bounded) Jordan curve. 
It is clear from the definition that ${\bf S_1} (\g)$ is invariant under affine transformation of $\m C$. By M\"obius invariance of the Loewner loop energy, we may also assume that $\g$ is inside the Euclidean ball of radius $2$ and of center $0$.  

Let $g_0 = e^{2\psi} d z^2$ be a metric conformally equivalent to the Euclidean metric (or the spherical metric), such that $\psi \equiv 0$ on $B (0, 2)$ and $e^{2\psi (z)} = 4/(1 +\abs{z}^2)^2$ in a neighborhood of $\infty$ which makes $g_0$ coincide with the spherical metric near $\infty$. 
We compute the quotient on the right hand side of the expression in Theorem~\ref{thm_energy_determinant} \ref{item_energy_determinant} by taking $g = g_0$.

The same computation (and the same notations) as in the proof of Theorem~\ref{thm_energy_determinant} shows that
\begin{align*}
&12 \log \frac{\detz( - \D_{\m D, g_0}) \detz( - \D_{\m D^*, g_0})  }{\detz( - \D_{D, g_0}) \detz( - \D_{D^*, g_0})} \\
=& 
\frac{1}{ \pi} \left (\int_{\m D \cup \m D^*} \abs{\nabla_0 \s}^2 + 2K_0 \s \dd \vol_0 + \int_{S^1 \sqcup S^1} 2 k_0 \s \dd l_0 + 3 \partial_{n_0} \s \dd l_0 \right) \\
=&\frac{1}{ \pi}  \left (\int_{\m D} \abs{\nabla \s_f}^2 \dd z^2 +  \int_{\m D^*} \abs{\nabla \s_g}^2 \dd z^2 \right)+  \frac{2}{\pi} \int_{S^1\sqcup S^1} k_0 \s \dd l_0,
\end{align*}
where $\s = \s_f + \psi(f) -\psi$ for $z \in \ad{\m D}$, and $\s = \s_g + \psi(g) -\psi$ for $z \in \ad{\m D}^*$,
$S^1 \sqcup S^1$ denotes the two copies of $S^1$ as the boundary of $\m D$ and of $\m D^*$, the value of $\s$ on the boundary depends on the copy accordingly. 
In fact, the analogous sum \eqref{eq_vanish_det} of the second and the third term in \eqref{eq_three_terms} 
\begin{equation*}
  \frac{2}{\pi} \int_{S^1 \sqcup S^1}  -k \psi(f) - \s \partial_n \psi \dd l
   \end{equation*}
also vanishes here since $\psi$ is identically $0$ in a neighborhood of $S^1$ and of $\g$.  
The only difference with the proof of Theorem~\ref{thm_energy_determinant} is that we have an extra term (analogous to the last term in \eqref{eq_three_terms}): that is $ \int_{S^1\sqcup S^1} k_0 \s \dd l_0$ since $k_0$ is not vanishing: 
$k_0 (z) = 1 $ for $z\in \partial \m D$ and $k_0 (z) = -1 $ for $z\in \partial \m D^*$.
Using again the fact that $\psi(f(z)) = \psi(z) = 0$ for $z \in S^1$, the smoothness up to boundary and the harmonicity of $\s_f$ and $\s_g$, we get:
$$ \frac{2}{\pi} \int_{S^1\sqcup S^1} k_0 \s \dd l_0 =  4 \s_f(0) - 4 \s_g (\infty) = 4 \log \abs{f'(0)} - 4 \log \abs{g'(\infty)}.$$
Hence,
$$I^L(\g) = {\bf S_1}(\g) /\pi, $$
for the smooth loop $\g$ by Theorem~\ref{thm_energy_determinant}.
  \end{proof}

In particular, for a bounded smooth loop $\g \subset \m C$, we have the identity
\begin{equation} \label{eq_integral_identity}
J(h) =\frac{1}{\pi} \int_{\m C\setminus \mu(\g)} \abs{\frac{h''}{h'}(z)}^2 \dd z^2 = \frac{1}{\pi} {\bf S_1}(\g)
\end{equation}
where $\mu $ is a M\"obius function $\Chat \to \Chat$ such that $\mu(\g(0)) = \infty$, and $h$ is a conformal map from the complement of $\mu(\g)$ onto $\m H \cup \m H^*$ that fixes $\infty$, as defined in Theorem~\ref{thm_loop_identity}. 
The identity~\eqref{eq_integral_identity} of two domain integrals has \emph{a priori} no reason to depend on the boundary regularity, which then implies Theorem~\ref{thm_energy_liouville} for general loops by an approximation argument. 

To make the approximation precise, we will use the following lemma which characterizes the convergence in the \emph{universal Teichm\"uller curve} $\mc T (1)$ which is a complex fibration over $T(1)$, given by
\begin{align*}
 \text{Rot}(S^1) \backslash \QS(S^1) & \simeq \{\varphi \in \QS(S^1), \varphi(1) = 1\} \\
& \simeq \{f \in \mc U, f(0) = 0, f'(0) = 1, f \text{ admits q.c. extension to } \m C \}.
\end{align*}
The second identification is obtained from solving the conformal welding problem as for $T(1)$: for each $\varphi \in \QS(S^1)$ that fixes $1$, there exist unique conformal maps $f$ and $g$ on $\m D$ and $\m D^*$ (\emph{canonically associated to $\varphi \in \mc T(1)$}), which satisfy \ref{CW_ext} and \ref{CW_g_infty} and 
\begin{enumerate}[CW'1.]
\setcounter{enumi}{1}
\item \label{CW'_welding} $\varphi = g^{-1} \circ f|_{S^1}$.
\item \label{CW'_f} $f(0) = 0$, $f'(0) = 1$.
\end{enumerate}
 Let $\pi : \mc T (1) \to T(1)$ be the projection and $\mc T_0(1) : = \pi^{-1} (T_0(1))$ is also a Hilbert manifold such that $\pi$ is fibration of Hilbert manifolds, see \cite[Appx.A]{TT2006WP}.

\begin{lemA}[{\cite[Cor.~A.4, Cor.~A.6]{TT2006WP}}]
\label{lem_TT_cor} 
Let $\{\varphi_n\}_{n= 1}^{\infty}$ be a sequence of points in $\mc T_0(1)$, let $f_n$ and $g_n$ be the conformal maps canonically associated to $\varphi_n$ such that $\varphi_n = g_n^{-1} \circ f_n$, and similarly let $\varphi = g^{-1} \circ f \in \mc T_0(1)$. Then the following conditions are equivalent:
\begin{enumerate}
  \item In $\mc T_0(1)$ topology, 
  $$\lim_{n\to \infty} \varphi_n = \varphi.$$
  \item 
  $$\lim_{n\to \infty} \int_{\m D} \abs{\frac{f_n''}{f_n'} (z) - \frac{f''}{f'} (z)}^2 \dd z^2 = 0. $$
  \item Let  $\tilde g(z) := 1/g(1/z)$ and $\tilde g_n(z) := 1/g_n(1/z)$ for all $n \geq 1$,
$$\lim_{n\to \infty} \int_{\m D} \abs{\frac{\tilde g_n''}{\tilde g_n'} (z) - \frac{\tilde g''}{\tilde g'} (z)}^2 \dd z^2 = 0. $$
\end{enumerate}
If above conditions are satisfied, then we have also
 $$\lim_{n\to \infty} \int_{\m D^*} \abs{\frac{g_n''}{g_n'} (z) - \frac{g''}{g'} (z)}^2 \dd z^2 = 0, $$
 and 
 $$ \lim_{n\to \infty}  {\bf S_1} ([\varphi_n]) =  \lim_{n\to \infty}  {\bf S} (f_n, g_n) =   {\bf S} (f,g) = {\bf S_1} ([\varphi]) .$$
\end{lemA}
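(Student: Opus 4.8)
The three conditions are three coordinate descriptions of one and the same Weil--Petersson topology on $\mc T_0(1)$, and the plan is to show that the pre-Schwarzian of $f$ on $\m D$ and the pre-Schwarzian of $\tilde g$ on $\m D$ each induce the intrinsic topology, and then to read off from this the convergence of the remaining geometric quantities and of ${\bf S_1}$. Recall from Theorem~\ref{thm_TT_equiv_T01} that membership in $T_0(1)$ is detected equally by $f''/f'$ lying in the Bergman space $A^2(\m D)$ of holomorphic functions with $\int_{\m D}\abs{\cdot}^2\dd z^2<\infty$ and by $S(f)$ lying in the weighted Bergman space $A^2_{\rho^{-1}}(\m D)$ with norm $\int_{\m D}\abs{\cdot}^2\rho^{-1}\dd z^2$; the intrinsic complex structure of \cite{TT2006WP} is the one for which the Bers map $\varphi\mapsto S(f)$ is a biholomorphic chart.

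To obtain $(1)\Leftrightarrow(2)$ I would compare these two models at the base point. Writing $u=f''/f'$ one has the Riccati relation $S(f)=u'-\tfrac12 u^2$. Passing from the $L^2(\m D)$ topology of $u$ to the $A^2_{\rho^{-1}}$ topology of $S(f)$ uses the elementary bound $\int_{\m D}\abs{u'}^2\rho^{-1}\dd z^2\lesssim\int_{\m D}\abs{u}^2\dd z^2$ valid for holomorphic $u$ (differentiation maps $A^2(\m D)$ boundedly to $A^2_{\rho^{-1}}(\m D)$), together with the Schwarz--Pick/univalence estimate $\abs{u(z)}\lesssim(1-\abs{z}^2)^{-1}$ to control the quadratic term $u^2$ in the weighted norm; the converse direction integrates the Riccati equation with the normalization $u(0)=0$ by a fixed-point argument, giving continuous dependence of $u$ on $S(f)$. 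Hence $\varphi\mapsto f''/f'$ is a homeomorphism onto its image in $A^2(\m D)$, which is $(1)\Leftrightarrow(2)$.

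For $(1)\Leftrightarrow(3)$, the inversion $\tilde g(z)=1/g(1/z)$ carries the univalent map $g$ on $\m D^*$ to a univalent map $\tilde g$ on $\m D$ with $\tilde g(0)=0$ and conjugates $S(g)$ on $\m D^*$ to $S(\tilde g)$ on $\m D$, so that $S(g)\in A^2_{\rho^{-1}}(\m D^*)$ iff $S(\tilde g)\in A^2_{\rho^{-1}}(\m D)$. Repeating the previous paragraph with $\tilde g$ in place of $f$ identifies the $\tilde g$-pre-Schwarzian topology with the $S(g)$ topology, so it remains to see that $S(f)$ and $S(g)$ are \emph{simultaneously} continuous coordinates for $\mc T_0(1)$: this is exactly the statement that the conformal-welding correspondence underlying the fibration $\pi:\mc T(1)\to T(1)$ recalled before the lemma is a homeomorphism in the Weil--Petersson topology. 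I expect this welding step to be the main obstacle, since it rests on the holomorphic (in particular continuous) dependence of the solution of the Beltrami/sewing problem on its data in the $A^2_{\rho^{-1}}$ norm, which is the genuinely non-elementary input from \cite{TT2006WP}; granting it, $(1)$, $(2)$ and $(3)$ become mutually equivalent.

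Finally, assuming $(1)$--$(3)$, convergence $S(g_n)\to S(g)$ in $A^2_{\rho^{-1}}(\m D^*)$ together with the $\m D^*$-analogue of the estimates above yields $g_n''/g_n'\to g''/g'$ in $L^2(\m D^*)$. The action ${\bf S_1}$ is, up to the boundary term $-4\pi\log\abs{g'(\infty)}$, the sum of the squared $L^2$-norms of the pre-Schwarzians of $f$ and $g$; these converge by $(2)$ and the statement just obtained, while $g_n'(\infty)=\tilde g_n'(0)^{-1}\to g'(\infty)$ because convergence of the Schwarzians forces $\tilde g_n\to\tilde g$ locally uniformly by a normal-family/Carath\'eodory argument, whence $\tilde g_n'(0)\to\tilde g'(0)$. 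Summing the three contributions gives ${\bf S_1}([\varphi_n])\to{\bf S_1}([\varphi])$, completing the proof.
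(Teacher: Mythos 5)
You should first be aware that the paper offers no proof of this lemma at all: it is quoted directly from \cite[Cor.~A.4, Cor.~A.6]{TT2006WP}, so your proposal has to be measured against that cited result rather than against any argument inside the paper. Judged this way, the proposal has a genuine gap, and it is exactly the one you flag yourself. The heart of the lemma is that the two Bers-type coordinates $S(f)$ and $S(g)$ (equivalently, the two pre-Schwarzian coordinates appearing in (2) and (3)) are \emph{simultaneously} continuous coordinates for the Weil--Petersson Hilbert-manifold topology, i.e.\ that the conformal welding correspondence $\varphi \mapsto (f,g)$ and its inverse are continuous in this topology. When you write that you grant ``the holomorphic (in particular continuous) dependence of the solution of the Beltrami/sewing problem on its data'', you are granting precisely the content of the corollaries being proved. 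What remains of your sketch --- the Riccati identity $S(f)=u'-\tfrac{1}{2}u^2$ for $u=f''/f'$, the univalence bound $\abs{u(z)}\lesssim (1-\abs{z}^2)^{-1}$, the boundedness of differentiation from the Bergman space into the weighted space with weight $\rho^{-1}$, and the inversion identity equating the weighted norms of $S(\tilde g)$ on $\m D$ and of $S(g)$ on $\m D^*$ --- is a correct but standard translation between the pre-Schwarzian and Schwarzian models. So the proposal in effect reduces the lemma to the lemma; it is not a proof of $(1)\Leftrightarrow(3)$, nor of the concluding statements about $g_n''/g_n'$ and ${\bf S_1}$, all of which require the welding step.

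Two further points. First, the lemma lives on the Teichm\"uller curve $\mc T_0(1)$, not on $T_0(1)$: the maps $f_n, g_n$ carry the normalization CW'2--CW'3 ($f(0)=0$, $f'(0)=1$, with no constraint on $f''(0)$), and convergence in $\mc T_0(1)$ includes convergence along the fiber direction of $\pi:\mc T_0(1)\to T_0(1)$ (a two-real-dimensional direction, reflected in these coordinates by the fact that $f''(0)$ is no longer required to vanish). Your description of the topology, via the Bers embedding $\varphi\mapsto S(f)$ as a biholomorphic chart, is a statement about $T(1)$; an honest proof of $(1)\Leftrightarrow(2)$ must also track the fiber coordinate. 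This is bookkeeping rather than a fatal obstacle, but it is absent. Second, your inverse Riccati step --- recovering $f''/f'$ continuously from $S(f)$ by ``a fixed-point argument'' --- is asserted rather than proven, and it is one of the delicate points, requiring uniform control coming from univalence; it too is part of what \cite{TT2006WP} actually establish. Given that the paper itself simply cites Takhtajan--Teo here, the reasonable courses are either to do the same, or to genuinely prove the continuity of welding in this $L^2$/weighted-Bergman setting; your sketch does neither.
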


We will also use the following lemma on the lower-semicontinuity of ${\bf S_1}$:
\begin{lem}\label{lem_lower_semi_S1} If a sequence $(\g_n: [0,1] \to \Chat)_{n \geq 0}$ of simple loops converges uniformly to a bounded loop $\g$, then
$$\liminf_{n \to \infty} {\bf S_1}(\g_n) \geq  {\bf S_1}(\g). $$
\end{lem}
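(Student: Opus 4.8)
The plan is to reduce the lower-semicontinuity of $\mathbf{S_1}$ to a lower-semicontinuity statement for the Dirichlet-type integrals that appear in its definition, exploiting the fact that uniform convergence of the loops forces locally uniform convergence of the associated conformal maps $f_n$ and $g_n$. First I would reduce to a normalized situation: since $\mathbf{S_1}(\g)$ is invariant under affine transformations of $\m C$, and uniform convergence $\g_n \to \g$ is preserved under a common affine normalization, I may assume all the $\g_n$ and $\g$ are uniformly bounded and that the bounded complementary components $D_n$, $D$ contain a fixed disk. Let $f_n : \m D \to D_n$ and $g_n : \m D^* \to D_n^*$ be the canonical conformal maps (normalized as in CW$'$\ref{CW'_f} and CW\ref{CW_g_infty}), and likewise $f, g$ for $\g$. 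The key input is that uniform convergence of the loops implies Carath\'eodory kernel convergence of the domains $D_n \to D$ and $D_n^* \to D^*$, so by the Carath\'eodory kernel theorem $f_n \to f$ and $g_n \to g$ locally uniformly on $\m D$ and $\m D^*$ respectively, together with all their derivatives on compact subsets. In particular $f_n''/f_n' \to f''/f'$ locally uniformly on $\m D$ and $g_n''/g_n' \to g''/g'$ locally uniformly on $\m D^*$.

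Next I would run the same Fatou-type argument already used for $J$ in Lemma~\ref{lem_lower_semi_J}. Writing each of the first two integrals in the definition of $\mathbf{S}(f,g)$ as a supremum over compact exhaustions,
\[
\int_{\m D} \abs{\frac{f''}{f'}(z)}^2 \dd z^2 = \sup_{K \Subset \m D} \int_K \abs{\frac{f''}{f'}(z)}^2 \dd z^2,
\]
and similarly for the $\m D^*$ integral, the locally uniform convergence of $f_n''/f_n'$ gives, for each fixed compact $K$,
\[
\int_K \abs{\frac{f''}{f'}(z)}^2 \dd z^2 = \lim_{n \to \infty} \int_K \abs{\frac{f_n''}{f_n'}(z)}^2 \dd z^2 \leq \liminf_{n \to \infty} \int_{\m D} \abs{\frac{f_n''}{f_n'}(z)}^2 \dd z^2,
\]
and taking the supremum over $K$ yields lower-semicontinuity of the $\m D$-integral; the $\m D^*$-integral is handled identically. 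It remains to treat the two boundary terms $4\pi \log\abs{f'(0)} - 4\pi\log\abs{g'(\infty)}$ in $\mathbf{S}(f,g)$. These are not merely lower-semicontinuous but in fact continuous: since $f_n \to f$ locally uniformly near $0$ we have $f_n'(0) \to f'(0)$, and the normalization forces $\abs{f'(0)} > 0$, so $\log\abs{f_n'(0)} \to \log\abs{f'(0)}$; similarly, writing $\tilde g_n(z) = 1/g_n(1/z)$ and $\tilde g(z) = 1/g(1/z)$, the quantity $\log\abs{g'(\infty)} = -\log\abs{\tilde g'(0)}$ depends continuously on $\tilde g'(0)$, and $\tilde g_n \to \tilde g$ locally uniformly near $0$ gives $\log\abs{g_n'(\infty)} \to \log\abs{g'(\infty)}$. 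Adding a lower-semicontinuous term and a convergent (hence continuous) term preserves lower-semicontinuity, so $\liminf_n \mathbf{S_1}(\g_n) \geq \mathbf{S_1}(\g)$.

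The main obstacle I anticipate is justifying that uniform convergence of the \emph{loops} genuinely produces Carath\'eodory kernel convergence of \emph{both} complementary components simultaneously, with the conformal maps normalized canonically rather than by fixing three boundary points; one must check that the chosen normalizations ($f_n(0)=0$, $f_n'(0)=1$ up to rotation, $g_n(\infty)=\infty$) are compatible with the kernel theorem, i.e.\ that the base points $0 \in D_n$ and $\infty \in D_n^*$ stay in the kernels, which is where the preliminary affine normalization (forcing a fixed disk inside every $D_n$) is used. A secondary subtlety is that the statement is genuinely only a one-sided inequality: no finiteness of $\mathbf{S_1}(\g_n)$ is assumed, so if $\liminf_n \mathbf{S_1}(\g_n) = \infty$ the conclusion is vacuous, and otherwise one passes to a subsequence realizing the $\liminf$ as a finite limit before running the exhaustion argument. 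With these points in place the argument is a direct transcription of the lower-semicontinuity proof for $J$.
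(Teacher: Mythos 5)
Your proposal is correct and follows essentially the same route as the paper: Carath\'eodory kernel convergence of both complementary components, a Fatou-type compact-exhaustion argument for the two Dirichlet-type integrals (exactly as in Lemma~\ref{lem_lower_semi_J}), and continuity of $f_n'(0)$ and $g_n'(\infty)$ for the logarithmic terms. The only cosmetic difference is the normalization of $f_n$ (the paper fixes $f_n(0)=z_0$ with $f_n'(0)>0$ for a common interior point $z_0$ rather than insisting on the canonical welding normalization), which is immaterial since $\mathbf{S}(f,g)$ is independent of the choice of conformal maps.
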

\begin{proof}
There is an $n_0$ large enough, such that $(\g_n)_{n \ge n_0}$ are bounded and $\cap_{ n \geq n_0} D_n \neq \emptyset$  where $D_n$ denotes the bounded connected component of $\m C \setminus \g_n$. Let
$z_0 \in \cap_{ n \geq n_0} D_n,$
and for $n \geq n_0$, $f_n: \m D \to D_n$ a conformal map such that $f_n (0) = z_0$ and $f_n'(0) > 0$.

From the Carath\'eodory kernel theorem, $f_n$ converges uniformly on compacts to $f: \m D \to D$, where $D$ is the bounded connected component of $\Chat\setminus \g$. It yields that for $K \subset \m D$ compact set,
\begin{align*}
\liminf_{n \to \infty} \int_{\m D} \abs{\frac{f_n''}{f_n'} (z)}^2 \dd z^2 \geq \liminf_{n\to \infty} \int_{K} \abs{\frac{f_n''}{f_n'} (z)}^2 \dd z^2 = \int_{K} \abs{\frac{f''}{f'} (z)}^2 \dd z^2.
\end{align*}
Since $K $ is arbitrary, 
$$\liminf_{n \to \infty} \int_{\m D} \abs{\frac{f_n''}{f_n'} (z)}^2 \dd z^2 \geq \int_{\m D} \abs{\frac{f''}{f'} (z)}^2 \dd z^2. $$
Similarly, let $g_n$ be the conformal map from $\m D^*$ onto the unbounded connected component $D_n^*$ of $\m C \setminus \g_n$ and $g:  \m D^* \to D^*$ that fix $\infty$. We have also that $g_n$ converges locally uniformly on compacts to $g$, and   
$$\liminf_{n \to \infty} \int_{\m D^*} \abs{\frac{g_n''}{g_n'} (z)}^2 \dd z^2 \geq \int_{\m D^*} \abs{\frac{g''}{g'} (z)}^2 \dd z^2.$$
And we have also $g_n'(\infty) \to g'(\infty)$, $f_n'(0) \to f'(0)$.
Hence 
$$\liminf_{n \to \infty} {\bf S_1}(\g_n) = \liminf_{n \to \infty} {\bf S}(f_n, g_n) \geq  {\bf S}(f, g) ={\bf S_1}(\g) $$
as we claimed.
  \end{proof}

We also cite the similar lower-semicontinuity of the Loewner loop energy from \cite{LoopEnergy}:
with the same condition,
$$\liminf_{n\to \infty} I^L(\g_n, \g_n(0)) \geq I^L(\g, \g(0)).$$

We can now finally prove Theorem~\ref{thm_energy_liouville} in the general case using approximations by smooth loops.
\begin{proof}[Proof for general loops]
Assume that ${\bf S_1}(\g) < \infty$. Let $f: \m D \to D$ and $g: \m D^* \to D^*$ be conformal maps associated to $\g$, without loss of generality we may assume that $f(0) = 0$, $f'(0) = 1$ and $g(\infty) = \infty$, so that $(f,g)$ is canonically associated to $g^{-1} \circ f \in \mc T_0(1)$.
Consider the sequence  $\g^n  := f(c_n S^1)$ of smooth loops that converges uniformly as parametrized loop (by $S^1$) to $\g$, where $c_n \uparrow 1$. Let $f_n (z) : = c_n^{-1} f(c_n z)$ such that $f_n(0) = 0$ and $f_n'(0) = 1$.
It is not hard to see that 
$$\lim_{n\to \infty} \int_{\m D} \abs{\frac{f_n''}{f_n'} (z) - \frac{f''}{f'} (z)}^2 \dd z^2 = 0. $$
In fact, $f_n$ converges uniformly to $f$ on $(1-\vare) 
\m D$ for $\vare >0$. And the above integral on the annulus $\m D \setminus (1-\vare) \m D$ is arbitrarily small as $\vare\to 0$ since $\bf{S_1} (\g)$ is finite.

Hence by Lemma~\ref{lem_TT_cor},
${\bf S_1}(\g^n)$ converges to ${\bf S_1}(\g)$. 
Since $\g^n$ converges uniformly to $\g$, from the lower-semicontinuity of Loewner energy and Theorem~\ref{thm_energy_liouville} for smooth loops,
\begin{equation} \label{eq_S_larger_J}
{\bf S_1}(\g) /\pi = \liminf_{n\to \infty} {\bf S_1}(\g^n) /\pi = \liminf_{n\to \infty} I^L(\g^n) \geq I^L(\g).
\end{equation}

Similarly, assume now that $I^L(\g) <\infty$ with driving function $W : \m R \to \m R$. Without loss of generality, we assume also that $\g$  is bounded and passes through $-1, -i, 1$. 
Let $W_n \in C_0^{\infty}(\m R)$ be a sequence of compactly supported smooth function, such that 
$$ \int_{-\infty}^{\infty} \abs{W' (t)- W'_n (t)}^2 \dd t \xrightarrow[]{n \to \infty} 0. $$
Let $\g_n$ be a loop in $\S$ with driving function $W_n$. By \cite{lind-tran}, $\g_n$ is smooth. We may assume that 
$
\sup_{n \geq 1} I^L(\g_n) < \infty
$
and $\g_n$ passes through $-1, -i, 1$ as well.
By \cite[Prop.~2.9]{LoopEnergy}, there exists $K > 1$ such that  $\g$ and $\g_n$ are $K$-quasicircles.
The compactness of $K$-quasiconformal maps allows us to subtract a subsequence $\g_{n_k}$ that converges uniformly to $\g$.

From Theorem~\ref{thm_energy_liouville} for smooth loops $\g_n$ and Lemma~\ref{lem_lower_semi_S1}, we have
$$I^L(\g) = \liminf_{k \to \infty} I^L(\g_{n_k}) = \liminf_{k \to \infty} {\bf S_1}(\g_{n_k})  /\pi \geq  {\bf S_1}(\g) /\pi.$$
We conclude that $I^L(\g)< \infty$ if and only if ${\bf S_1}(\g) < \infty$ and $I(\g) = {\bf S_1} (\g) / \pi$ as claimed in Theorem~\ref{thm_energy_liouville}.
  \end{proof}

   \section {An informal discussion}
   \label {informal}

Let us conclude with some very loose comments on the relation
between our Theorem~\ref{thm_loop_identity} and the theory of SLE and Liouville quantum gravity (LQG).  
Recall first that the Loewner energy was shown in \cite{wang_loewner_energy} to be
a large deviation rate function of SLE$_\k$ as $\k $ goes to $0$. Heuristically,
 \[I(\g) = \lim_{\vare \to 0} \lim_{\k \to 0} -\k \log \P (\text{SLE}_\k \text{ stays } \vare\text{-close to } \g).  \]
Given a sufficiently smooth simple curve $\g$, the mapping-out function $h$ from the complement of $\g$ to a standard domain ($\m H \cup \m H^*$), induces a metric on the standard domain that is the push-forward of the Euclidean metric of the initial domain. 
The exponential exponent of the conformal factor is given by $\s_{h^{-1}} (\cdot) := \log \abs{h^{-1} (\cdot)'}$. 
It prescribes in turn the welding homeomorphism of the curve $\g$ on $\m R$ by identifying boundary points according to the boundary length of this metric (see Figure~\ref{fig_welding}).  

 \begin{figure}[ht]
 \centering
 \includegraphics[width=0.7\textwidth]{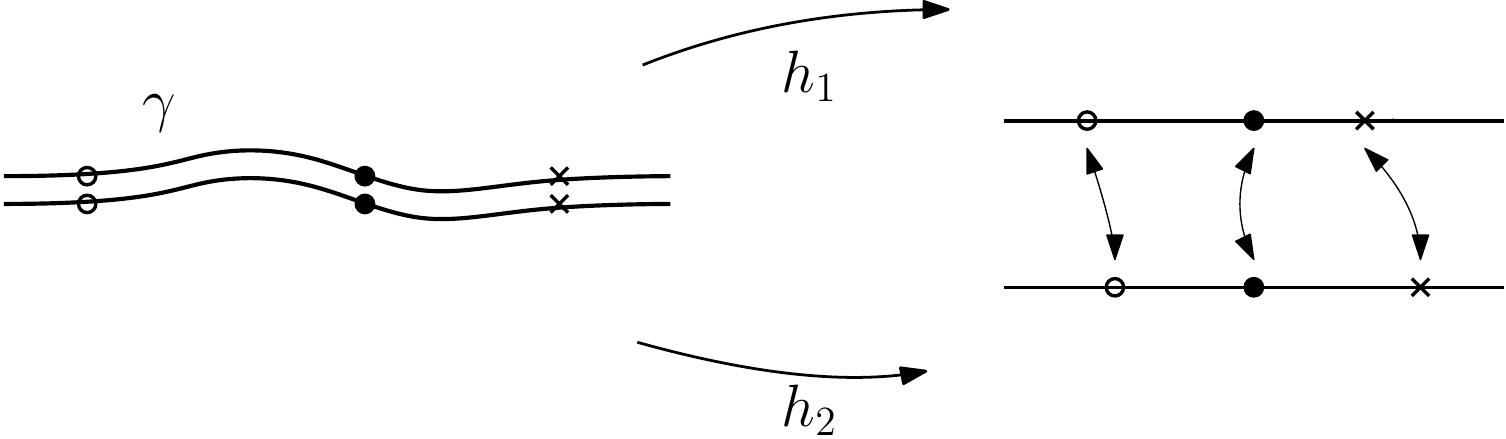}
 \caption{\label{fig_welding} Welding of a simple loop $\g$ passing through $\infty$. } 
 \end{figure}

On the other hand, the LQG approach to SLE pioneered by Sheffield in \cite{scott_zipper} provides an interpretation of SLE curves via welding of structures defined using the exponential of the Gaussian Free Field (GFF). 
More specifically, let $\Phi$ be a free boundary Gaussian free field on the standard domain. That is the random field that can be described in loose term 
as having a ``density'' proportional to 
$$\exp\left(-\frac{1}{4\pi} \int \abs{\nabla \Phi (z)}^2 \dd z^2\right).$$ 
One takes formally $\exp(\sqrt{\k} \Phi)$ times the Lebesgue measure 
(modulo some appropriate renormalization procedure) to define a random measure (LQG) on the standard domain (which corresponds in fact to $\sqrt \k$-quantum wedges with an opening angle $\t$ which converges to $\pi$ when $\k \to 0$). It also induces a random boundary length which can be viewed as 
$\exp((\sqrt{\k}/2 )\Phi)$ times the Euclidean arclength (again modulo some appropriate renormalization procedure).  
Intuitively, the quantum zipper then states that welding two independent free boundary GFFs up according to their random boundary length gives an SLE$_\k$ curve.

We can note that the Dirichlet energy of $\s_{h^{-1}}$ is the action functional that is naturally associated to the Gaussian free field, so that 
in a certain sense, one has a large deviation principle of the type
\begin{eqnarray*}
 \lefteqn { \lim_{\vare \to 0} \lim_{\k \to 0} -\k \log \P ( (\sqrt \k/2) \Phi \text{ stays } \vare \text{-close to } \s_{h^{-1}})} \\
 & \approx & \lim_{\k \to 0} - \k \log \exp\left(-\frac{1}{4\pi} \int \abs{\frac{2 \nabla  \s_{h^{-1}}}{\sqrt \k}}^2 \dd z^2\right)\\
 & = & \frac{1}{\pi} \int \Big |\nabla \s_{h^{-1}} (z)\Big |^2 \dd z^2.
\end{eqnarray*}
Hence, our identity between the Loewner energy and the Dirichlet energy of $\sigma_h$ (which is the same as the Dirichlet energy of $\sigma_{h^{-1}}$) 
is loosely speaking equivalent to the fact that (in some sense) as $\kappa \to 0$ (and then $\eps \to 0$), 
the decay rates of $$\P ( (\sqrt \k/2) \Phi \text{ stays } \vare \text{-close to } \s_{h^{-1}}) \hbox { and } 
\P (\text{SLE}_\k \text{ stays } \vare\text{-close to } \g)$$
are comparable. 
However, the above argument is not even close to be rigorous (it would be interesting to explore it though) 
and the proof in this paper follows a completely different route and does not use any knowledge about SLE, LQG or the quantum zipper.

\appendix

\section{Geodesic curvature formula}
As many of our proofs rely on the following formula on the variation of the geodesic curvature under a Weyl-scaling, we sketch a short proof for readers' convenience.
\begin{lem}\label{lem:geodesic_curvature}
  Let $(D, g_0)$ be a surface with smooth boundary $\g = \partial D$. If  $\s \in C^{\infty} (D, \m R)$ and $g = e^{2 \s} g_0$, the geodesic curvature of $ \partial D$ under the metric $g$ satisfies
  $$k_g =  e^{-\s} \left( k_0  + \partial_{n_0} \s \right),$$
  where $k_0$ is the geodesic curvature under the metric $g_0$, and $\partial_{n_0}$ the outer-normal derivative with respect to to $g_0$.
\end{lem}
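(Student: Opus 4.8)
The plan is to derive the identity pointwise along $\partial D$ from the standard transformation rule for the Levi-Civita connection under a conformal change of metric, working intrinsically with no recourse to local coordinates. Recall that if $g = e^{2\sigma}g_0$, then for any vector fields $X,Y$ one has
$$\nabla^g_X Y = \nabla^{g_0}_X Y + (X\sigma)\,Y + (Y\sigma)\,X - g_0(X,Y)\,\mathrm{grad}_{g_0}\sigma,$$
where $\mathrm{grad}_{g_0}\sigma$ denotes the $g_0$-gradient of $\sigma$. Since the geodesic curvature is a pointwise quantity along $\gamma = \partial D$, the smoothness of the boundary is all that is needed to make this computation meaningful.

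First I would fix the $g_0$-orthonormal moving frame $(T_0, N_0)$ along $\gamma$, where $T_0$ is the $g_0$-unit tangent and $N_0$ the $g_0$-unit inner normal, so that the Frenet relation reads $\nabla^{g_0}_{T_0}T_0 = k_0 N_0$. Applying the transformation rule with $X = Y = T_0$ and using $g_0(T_0,T_0)=1$ gives $\nabla^g_{T_0}T_0 = k_0 N_0 + 2(T_0\sigma)T_0 - \mathrm{grad}_{g_0}\sigma$. Decomposing $\mathrm{grad}_{g_0}\sigma = (T_0\sigma)T_0 + (N_0\sigma)N_0$ in this orthonormal frame, the tangential terms combine to leave $\nabla^g_{T_0}T_0 = k_0 N_0 + (T_0\sigma)T_0 - (N_0\sigma)N_0$.

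Next I would pass to the $g$-unit tangent $\tilde T = e^{-\sigma}T_0$ and the $g$-unit normal $\tilde N = e^{-\sigma}N_0$ and compute $\nabla^g_{\tilde T}\tilde T = e^{-\sigma}\nabla^g_{T_0}(e^{-\sigma}T_0)$. The term produced by differentiating the scalar $e^{-\sigma}$ precisely cancels the tangential contribution $(T_0\sigma)T_0$, yielding $\nabla^g_{\tilde T}\tilde T = e^{-2\sigma}(k_0 - N_0\sigma)N_0 = e^{-\sigma}(k_0 - N_0\sigma)\tilde N$. Reading off the coefficient of $\tilde N$ gives $k_g = e^{-\sigma}(k_0 - N_0\sigma)$.

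Finally I would reconcile the sign conventions: with the boundary oriented so that $D$ lies on its left, the Frenet normal $N_0$ is the \emph{inner} normal, so $N_0 = -n_0$ and hence $-N_0\sigma = \partial_{n_0}\sigma$, which turns the previous display into $k_g = e^{-\sigma}(k_0 + \partial_{n_0}\sigma)$, as claimed. The only genuinely delicate point is the bookkeeping of the renormalization factors $e^{-\sigma}$ together with the orientation of the normal; the algebra itself is short once the frame $(T_0, N_0)$ is fixed. An equivalent but more computational route would reduce to isothermal coordinates for $g_0$, write $g_0 = e^{2\rho}\abs{dz}^2$ locally, invoke the classical planar identity $k = e^{-u}(k_{\mathrm{euc}} + \partial_n u)$ for a metric $e^{2u}\abs{dz}^2$ applied with $u = \rho$ and $u = \sigma + \rho$, and subtract; the intrinsic computation above avoids introducing coordinates altogether.
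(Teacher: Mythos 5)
Your proof is correct and follows essentially the same route as the paper's: both rest on the conformal transformation rule for the Levi-Civita connection, a rescaling between $g_0$-unit and $g$-unit frames along the boundary, and reading off the normal component (the paper starts from the $g$-unit tangent and rescales up to $g_0$-unit vectors, while you go the other way and explicitly decompose $\mathrm{grad}_{g_0}\sigma$ in the frame, but this is only a cosmetic difference). The sign bookkeeping between the inner Frenet normal and the outer normal derivative is handled correctly and matches the paper's convention.
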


\begin{proof}
 We parameterize $\g$ by arclength in $g$ and let $N$ be the outer normal vector field on~$\g$,
 namely  
 $g (\dot \g , \dot \g ) = g (N, N) \equiv 1$. 
 We have that $\dot \g_0 :=  e^{\s} \dot \g $ and $N_0 : = e^{\s} N$ are unit vectors under $g_0$.
 The geodesic curvature of $\partial D$  is given by 
 $$k_g = g \left(\nabla_{g,\dot \g }\dot \g , - N \right). $$
 The covariant derivative $\nabla_g$ is related to the covariant derivative $\nabla_0$ under $g_0$ by 
 $$\nabla_{g, X} Y = \nabla_{0, X} Y + X(\s) Y + Y(\s) X - g_0 (X,Y) \nabla_0 \s.$$
 Therefore,
 \begin{align*}
 \nabla_{g,\dot \g}\dot \g &=  \nabla_{0, \dot \g } \dot \g + 2g_0 (\dot \g, \nabla_0 \s) \dot \g - g_0 (\dot \g ,\dot \g) \nabla_0 \s \\
 &= e^{-2\s}  \nabla_{0, \dot \g_0 } \dot \g_0 +  2g_0 (\dot \g, \nabla_0 \s) \dot \g - e^{-2\s} \nabla_0 \s.
 \end{align*}
 Since $g (\dot \g, N) = 0$, we have
 \begin{align*}
 g \left(\nabla_{g,\dot \g }\dot \g, - N \right) & =  e^{2\s} g_0 \left(e^{-2\s} (\nabla_{0, \dot \g_0 } \dot \g_0  - \nabla_0 \s), -e^{-\s} N_0\right) \\
 & = e^{-\s } (k_0  + \partial_{n_0} \s )
 \end{align*}
 as claimed.
\end{proof}

\end{document}